
\documentclass[prepint]{RMMCART}
\usepackage{amssymb, amscd, amsmath, float, graphicx, longtable,color,
  enumerate, fullpage
}
\usepackage{amsrefs} 
\usepackage{mathrsfs}
\usepackage{psfrag}
\usepackage{fouriernc}\DeclareMathAlphabet{\mathcal}{OMS}{cmsy}{m}{n}
\usepackage[all]{xy}
\SelectTips{eu}{} 
\xyoption{curve}
\usepackage{setspace}
\usepackage{relsize, nicefrac}

\usepackage[utf8]{inputenc}    
\usepackage[T1]{fontenc}       

\newcommand{\hide}[1]{}

\linespread{1.5}\selectfont
\usepackage[colorlinks]{hyperref}



\DeclareMathOperator{\add}{add{}}

\DeclareMathOperator{\cok}{cok}

\DeclareMathOperator{\depth}{depth}

\DeclareMathOperator{\End}{End}
\DeclareMathOperator{\Ext}{Ext}

\DeclareMathOperator{\gldim}{gldim}

\DeclareMathOperator{\Hom}{Hom}

\DeclareMathOperator{\Mod}{Mod}
\DeclareMathOperator{\op}{op}
\DeclareMathOperator{\pd}{projdim}
\DeclareMathOperator{\id}{injdim}

\DeclareMathOperator{\proj}{proj}

\DeclareMathOperator{\Spec}{Spec}
\DeclareMathOperator{\Supp}{Supp}

\DeclareMathOperator{\Tr}{Tr}


\renewcommand{\phi}{\varphi}

\renewcommand{\mod}{\operatorname{mod}}
\renewcommand{\leq}{\leqslant}

\renewcommand{\geq}{\geqslant}

\renewcommand{\to}{\longrightarrow}




\DeclareMathOperator{\CM}{CM}

\DeclareMathOperator{\Tor}{Tor}


\newcommand{\p}{{\mathfrak{p}}}
\newcommand{\m}{{\mathfrak{m}}}







\newtheorem{theorem}{Theorem}

\newtheorem{prop}[theorem]{Proposition}
\newtheorem{proposition}[theorem]{Proposition}

\newtheorem{lemma}[theorem]{Lemma}

\newtheorem{corollary}[theorem]{Corollary}
\newtheorem{question}[theorem]{Question}

\newtheorem{defn}[theorem]{Definition}
\newtheorem{definition}[theorem]{Definition}

\newtheorem{remark}[theorem]{Remark}
\newtheorem{notation}[theorem]{Notation}

\newtheorem{example}[theorem]{Example}

\numberwithin{theorem}{section}
\numberwithin{equation}{section}

 \hyphenation{mod-ule com-plex com-plex-es mor-phism ho-mo-mor-phism
   iso-mor-phism ho-mo-morphic pro-jec-tive in-jec-tive re-so-lu-tion
   ho-mo-lo-gy ho-mo-lo-gi-cal ho-mo-lo-gi-cally du-a-liz-ing
   re-si-due pa-ral-lel semi-dualizing mono-morphism
   de-sing-u-lar-i-za-tion rep-re-sen-ta-tion mod-ules comm-u-ta-tive}




\begin{document}

\setlength\abovedisplayskip{0pt}
\setlength\belowdisplayskip{0pt}

\title{%
Auslander's Theorem and $n$-isolated singularities
}

\author{Josh Stangle}


\date{\today}


\begin{abstract} One of the most stunning results in the representation theory of Cohen-Macaulay rings is Auslander's well known theorem which states a CM local ring of finite CM type can have at most an isolated singularity. There have been some generalizations of this in the direction of countable CM type by Huneke and Leuschke. In this paper, we focus on a different generalization by restricting the class of modules. Here we consider modules which are high syzygies of MCM modules over non-commutative rings, exploiting the fact that noncommutative rings allow for finer homological behavior. We then generalize Auslander's Theorem in the setting of complete Gorenstein local domains by examining path algebras, which preserve finiteness of global dimension.  

%

\end{abstract}
\maketitle 

%

\section{Introduction}

One main focus of the study of representation theory of commutative Noetherian rings is the question of finite Cohen-Macaulay (CM) type--i.e., when a local commutative Noetherian ring $R$ has only finitely many (up to isomorphism) indecomposable maximal Cohen-Macaulay modules. Auslander showed that a complete Cohen-Macaulay local ring $R$ of finite CM type has at most an isolated singularity; that is, \[\gldim R_\p=\dim R_\p\] for all non-maximal prime ideals $\p\in \Spec R$, \cite{Aus1}. Wiegand \cite{Wiegand:98} and Leuschke-Wiegand \cite{LW:00} then proved that finite CM type ascends to and descends from the completion of an excellent local ring $R$, thus generalizing the theorem to all excellent CM local rings. Finally, Huneke-Leuschke gave a completion-free proof for arbitrary CM local rings in \cite{HL}.
In the paper of Huneke-Leuschke, the idea of countable CM type is addressed, and they are able to show that if a CM local ring has countable CM type then the singular locus is at most one-dimensional. In this paper we are interested in a different generalization of Auslander's theorem. We wish to restrict our finiteness assumption to a smaller class of modules. To do so we will consider non-commutative algebras over which high-syzygies exhibit similar behavior to maximal Cohen-Macaulay modules over CM local rings. Such algebras will be called $n$-canonical orders, and Section \ref{n-canon} will focus on their properties. The main result, Theorem \ref{AuslanderGen}, is that for an $n$-canonical $R$-order $\Lambda$, if there are only finitely many (up to isomorphism) indecomposable $n^{th}$ syzygies of MCM $\Lambda$-modules, then $\gldim \Lambda_\p \leq n+\dim R_\p$ for all non-maximal prime ideals $\p \in \Spec R$. 

Finally, in section \ref{commutative} we refocus on the case of commutative rings, using our main theorem to show that if $R$ is a complete Gorenstein local domain and $Q$ is an acyclic quiver such that $RQ$ has finitely many indecomposable first syzygies (of MCM $RQ$-modules), then $R$ has at most an isolated singularity. This is a generalization of Auslander's theorem for Gorenstein domains.

\section{Background and Notation}

Here we will briefly remind the reader of the notation, conventions, and definitions which are heavily utilized in this article. Throughout, $R$ will be a commutative Noetherian ring of finite Krull dimension $d$. We use the notation $(R,\m,k)$ to imply $R$ is a commutative local Noetherian ring with maximal ideal $\m$ and residue field $R/\m=k$. For the convenience of the reader, below we include definitions of the preliminary notions we will use.

\begin{definition}  Let $(R,\m,k)$ be a commutative local Noetherian ring of finite Krull dimension $d$. 
\mbox{}
\begin{itemize} 

\item Let $M$ be a finitely generated $R$-module. We say $M$ is \emph{maximal Cohen-Macaulay} (MCM) if \[\mathrm{depth} (M):= \mathrm{min} \{i\geq 0\mid \mathrm{Ext}^i_R(k,M)\neq 0\}=d.\]

\item An $R$-algebra $\Lambda$ is an $R$-\emph{order} if it is a MCM $R$-module.

\item Denote by $\Mod \Lambda$ the category of left $\Lambda$-modules and $\mod \Lambda$ the full subcategory of $\Mod \Lambda$ consisting of finitely generated modules. Unless specified otherwise, when we say $M$ is a $\Lambda$-module, we always mean a finitely generated left $\Lambda$-module.

\item We denote by $\CM\Lambda$ the full subcategory of  $\mod \Lambda$ consisting of modules which are maximal Cohen-Macaulay $R$-modules. 

\item For a (possibly non-commutative) ring $\Gamma$, we will denote by $\Gamma^{\op}$ the opposite ring. If $M$ is an abelian group with a right $\Gamma$-module structure, we will say $M\in \mod \Gamma^{\op}$ to indicate that $M$ is a left $\Gamma^{\op}$-module.

\item An $R$-order $\Lambda$ is \emph{non-singular} if $\gldim\Lambda_\p=\dim R_\p$ for all $\p \in \Supp_R\Lambda$.

\item We say an $R$-order $\Lambda$ is an \emph{isolated singularity} if $\gldim\Lambda_\p=\dim R_\p$ for all non-maximal prime ideals $\p\in\Supp_R\Lambda$.

\item For any ring $\Gamma$, we denote by $\proj \Gamma$ the full subcategory  of $\mod \Lambda$ consisting of all projective $\Gamma$-modules.

\item For any module $M$, $\add M$ denotes the \emph{additive closure} of $M$, i.e., the full subcategory of $\Mod \Lambda$ consisting of all modules which are isomorphic to direct summands of finite direct sums of copies of $M$. 

\end{itemize}
\end{definition}

For details on Cohen-Macaulay rings or canonical modules, see \cite{BH}*{Section 3.3}; for details on orders, see \cites{CurtisReiner, Reiner}; for details on module-finite algebras with injective dimension in mind, see \cite{GotoKenjiBass}.

In the case that $R$ is Cohen-Macaulay with a canonical module $\omega_R$, an $R$-order possesses a special module akin to $\omega_R$. 

\begin{definition} Let $R$ be a Cohen-Macaulay ring with canonical module $\omega_R$ and $\Lambda$ an $R$-order. Then the \emph{canonical module} of $\Lambda$ is $\omega_\Lambda=\Hom_R(\Lambda,\omega_R)$.  We see that $\omega_\Lambda$ is both a $\Lambda-$ and $\Lambda^{op}$-module.
\end{definition}

It will be useful to note that the canonical module of an order over a Cohen-Macaulay local ring is invariant under change of base ring.

\begin{lemma}\label{ChangeofBase} Suppose $R$ is a CM local ring with a canonical module $\omega_R$ and that $S\hookrightarrow R$ is  ring extension where $S$ possesses a canonical module $\omega_S$. Suppose $\Lambda$ is an algebra which is both an $S$-order and an $R$-order. Then, the canonical module of $\Lambda$ as an $R$-order is isomorphic (as a $\Lambda$-module) to the canonical module of $\Lambda$ as an $S$-order. 

\end{lemma}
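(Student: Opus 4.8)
The plan is to deduce the isomorphism from the Hom--tensor adjunction, after first identifying $\omega_R$ with $\Hom_S(R,\omega_S)$. Throughout, write $\omega_\Lambda^{(R)} = \Hom_R(\Lambda,\omega_R)$ and $\omega_\Lambda^{(S)} = \Hom_S(\Lambda,\omega_S)$ for the two modules in question; the goal is to produce an isomorphism $\omega_\Lambda^{(R)} \cong \omega_\Lambda^{(S)}$ of left $\Lambda$-modules (in fact of $\Lambda$-bimodules).

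The commutative-algebra input I would record first is that $\omega_R \cong \Hom_S(R,\omega_S)$ as $R$-modules. Since $S \hookrightarrow R$ is a module-finite ring extension, $\dim R = \dim S$, and $R$ is a maximal Cohen--Macaulay $S$-module, because $\m_S R$ is $\m_R$-primary and hence the depth of a finite $R$-module over $R$ agrees with its depth over $S$. Thus $\Hom_S(R,\omega_S)$ is a canonical module for $R$ by \cite{BH}*{Theorem 3.3.7}, and since $R$ is local its canonical module is unique up to isomorphism, giving $\omega_R \cong \Hom_S(R,\omega_S)$. In the situations of interest --- in particular when $S$ is regular local, as in Section~\ref{commutative}, so that $\omega_S \cong S$ --- the hypothesis $R\cong\Hom_S(R,S)$ is precisely this isomorphism, and it moreover shows $R$ is Gorenstein with $\omega_R \cong R$.

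Then I would compute. Regarding $R$ as an $(S,R)$-bimodule (left $S$-structure via $S\hookrightarrow R$, right $R$-structure regular) and $\Lambda$ as a left $R$-module through its structure map, I would apply $\Hom_R(\Lambda,-)$ to the isomorphism of the previous step and then Hom--tensor adjunction:
\[
\Hom_R(\Lambda,\omega_R) \;\cong\; \Hom_R\!\big(\Lambda,\Hom_S(R,\omega_S)\big) \;\cong\; \Hom_S(R\otimes_R\Lambda,\omega_S) \;\cong\; \Hom_S(\Lambda,\omega_S).
\]
The one point requiring care, and the main (if mild) obstacle, is that each arrow is compatible with the $\Lambda$-actions. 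Recalling that the left, respectively right, $\Lambda$-module structure on $\Hom_R(\Lambda,\omega_R)$ and on $\Hom_S(\Lambda,\omega_S)$ is given by precomposition with right, respectively left, multiplication on the source copy of $\Lambda$, the first isomorphism is $\Lambda$-bilinear by functoriality of $\Hom_R(\Lambda,-)$ applied to an $R$-linear map, and the adjunction isomorphism $g\mapsto\big(\lambda\mapsto g(\lambda)(1_R)\big)$ is checked directly to intertwine both $\Lambda$-bimodule structures. Since $\Lambda$ is noncommutative this bookkeeping of left/right actions is the real substance of the proof; everything else is formal or standard.
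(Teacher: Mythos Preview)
Your proof is correct and follows essentially the same route as the paper: the key step in both is the chain of isomorphisms
\[
\Hom_R(\Lambda,\omega_R)\;\cong\;\Hom_R\!\big(\Lambda,\Hom_S(R,\omega_S)\big)\;\cong\;\Hom_S(\Lambda\otimes_R R,\omega_S)\;\cong\;\Hom_S(\Lambda,\omega_S)
\]
via Hom--tensor adjunction, followed by a check of $\Lambda$-linearity. The only difference is that the paper works directly with the stated hypothesis $R\cong\Hom_S(R,S)$ (implicitly taking $\omega_S=S$ and $\omega_R=R$), whereas you first argue that $\omega_R\cong\Hom_S(R,\omega_S)$ from the theory of canonical modules under finite extensions and then specialize; your version is slightly more general and more carefully justified, but the argument is the same.
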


\begin{proof}
All we need to establish is that $\Hom_S(\Lambda,\omega_S)\cong \Hom_R(\Lambda,\omega_R)$, i.e. that the canonical module of $\Lambda$ as an $R$-order agrees with that as an $S$-order.  By \cite{BH}*{Theorem 3.3.7} we see \[\Hom_R(\Lambda, \omega_R) \cong \Hom_R(\Lambda, \Hom_S(R,\omega_S)) \cong \Hom_S(\Lambda \otimes_R R, \omega_S)\cong \Hom_S(\Lambda,\omega_S).\] It is straight-forward to verify this is also an isomorphism of $\Lambda$-modules. \qedhere

\end{proof}

In the rest of this article, $R$ is always assumed to be a Cohen-Macaulay local ring with canonical module $\omega_R$. Let $\Lambda$ be an $R$-order. We have the following functors. 

\begin{itemize}
\item The \emph{canonical dual} $D_d(-):=\Hom_R(-,\omega_R):\CM\Lambda \to \CM \Lambda^{\op}$. Note, this functor is exact on $\CM \Lambda$ since $\Ext_R^i(M,\omega_R)=0$ for $i>0$ and $M$ an MCM $R$-module. 
\item The \emph{Matlis dual} $D:=\Hom_R(-,E)$ where $E$ is the injective hull of the residue field, $k$, of $R$. Letting $\text{f.l.}R$ denote the full subcategory of $\mod R$ consisting of finite length $R$-modules, $D:\text{f.l.}R \to \text{f.l.}R$ is a duality. 
\item The functor $(-)^*:=\Hom_\Lambda(-,\Lambda):\mod \Lambda \to \mod \Lambda^{op}$ which gives a duality $(-)^*:\add \Lambda \to \add \Lambda^{\op}$.
\item The \emph{transpose duality} $\Tr:\underline{\mod} \Lambda \to \underline{\mod} \Lambda^{op}$ given by $\Tr M=\cok f_1^*$, where $P_1\xrightarrow{f_1} P_0 \xrightarrow{f_0} M\ \to 0$ is a minimal projective resolution of $M$.
\item Finally, we denote $\Hom_R(-,R)=(-)^\dagger$. In the case when $R$ is Gorenstein, we note that $D_d(-)=(-)^\dagger$. 
\end{itemize}

\section{Projective Dimension and the Canonical module}

\subsection{$n$-Canonical Orders}

In this section we examine orders which exhibit similar behavior as seen in commutative rings. Specifically, we note that by the Auslander-Buchsbaum formula \cite{AusBuchs}, maximal Cohen-Macaulay modules over commutative rings are either projective or have infinite projective dimension. We prove that for orders over CM rings, finite projective dimension of the canonical modules gives a similar result for high syzygies. The central objects of this paper, $n$-canonical orders, are a generalization of the following well-studied class of orders.

\begin{definition}
 Let $R$ be a Cohen-Macaulay ring with canonical module $\omega_R$ and $\Lambda$ an $R$-order. If $\omega_\Lambda$ is projective as a left $\Lambda$-module, then $\Lambda$ is called a \emph{Gorenstein order}.
\end{definition}

A great deal of work has been done to study Gorenstein orders, see e.g., \cite{CurtisReiner}, \cite{GotoKenjiBass},  \cite{IYAMA_HIRT}, and \cite{IW1}.  These are natural candidates--when $R$ is a CM local ring--for noncommutative crepant resolutions. One reason that Gorenstein orders are so useful is that they exhibit some similar behavior to commutative rings. In particular, they satisfy an Auslander-Buschbaum theorem.  

\begin{lemma}\cite{IW1}*{Lemma 2.16} Let $\Lambda$ be a Gorenstein $R$-order. Then for any $X\in \mod \Lambda$ with $\pd_\Lambda X<\infty$ we have \[\pd_\Lambda X + \depth_R X =\dim R.\]
\end{lemma}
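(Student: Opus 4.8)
The plan is to run the classical Auslander--Buchsbaum argument, but carrying out the induction over the base $R$ rather than over $\Lambda$, with the Gorenstein hypothesis isolated into a single clean statement. Set $d=\dim R$. Two elementary ``$R$-side'' facts are available because $\Lambda$ is an order: every projective $\Lambda$-module is a summand of a free module, hence is a maximal Cohen--Macaulay $R$-module with $\depth_R$ equal to $d$; and one has the depth lemma for a short exact sequence $0\to A\to B\to C\to 0$ of $R$-modules, in particular the sharp form ``if $\depth_R A<\depth_R B$ then $\depth_R C=\depth_R A-1$''. These already give that a syzygy of a module in $\CM\Lambda$ is again in $\CM\Lambda$: if $Y\in\CM\Lambda$ and $0\to\Omega Y\to P\to Y\to 0$ with $P$ projective, then $\depth_R\Omega Y\geq\min\{d,d+1\}=d$.

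The crux, and the only place the Gorenstein hypothesis enters, is the claim: \emph{if $Y\in\CM\Lambda$ has $\pd_\Lambda Y<\infty$, then $Y$ is projective.} To prove it, suppose $m:=\pd_\Lambda Y\geq 1$. By the previous observation the $(m-1)$st syzygy $Z:=\Omega^{m-1}Y$ lies in $\CM\Lambda$ and satisfies $\pd_\Lambda Z=1$; fix a projective resolution $0\to P_1\to P_0\to Z\to 0$. Applying the canonical dual $D_d=\Hom_R(-,\omega_R)$, which is exact on $\CM\Lambda$, produces a short exact sequence $0\to D_dZ\to D_dP_0\to D_dP_1\to 0$ of $\Lambda^{\op}$-modules. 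Now $D_d(\Lambda)=\omega_\Lambda$, and since $\Lambda$ is a Gorenstein order $\omega_\Lambda$ is projective as a left $\Lambda$-module by definition and also as a right $\Lambda$-module (a standard fact about Gorenstein orders; see e.g. \cite{CurtisReiner} or \cite{IW1}); hence $D_dP_0$ and $D_dP_1$ are projective $\Lambda^{\op}$-modules. The displayed sequence therefore splits, so $D_dZ$ is a projective $\Lambda^{\op}$-module, and applying $D_d$ once more (using that $D_d$ restricts to a duality between $\CM\Lambda$ and $\CM\Lambda^{\op}$ and that $D_d(\Lambda_\Lambda)\cong\omega_\Lambda$) shows $Z\cong D_d(D_dZ)$ is a projective left $\Lambda$-module. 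This contradicts $\pd_\Lambda Z=1$, so $m=0$.

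Granting the claim, the formula follows by the usual bookkeeping. Let $X\neq 0$ with $n:=\pd_\Lambda X<\infty$; if $n=0$ then $X$ is projective and $\depth_R X=d$, as required. For $n\geq 1$ take a minimal projective resolution $0\to P_n\to\cdots\to P_0\to X\to 0$ and write $\Omega^iX$ for the $i$th syzygy, so that $\Omega^nX\cong P_n$ and, for $1\leq i\leq n-1$, $\Omega^iX$ is not projective and hence, by the claim, not in $\CM\Lambda$, i.e. $\depth_R\Omega^iX<d$. The depth lemma applied to $0\to P_n\to P_{n-1}\to\Omega^{n-1}X\to 0$ gives $\depth_R\Omega^{n-1}X\geq d-1$, whence $\depth_R\Omega^{n-1}X=d-1$; and for $0\leq i\leq n-2$, since $\depth_R\Omega^{i+1}X<d=\depth_R P_i$, the sharp depth lemma applied to $0\to\Omega^{i+1}X\to P_i\to\Omega^iX\to 0$ gives $\depth_R\Omega^iX=\depth_R\Omega^{i+1}X-1$. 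Descending from $i=n-1$ to $i=0$ yields $\depth_R X=d-n$, which is exactly $\pd_\Lambda X+\depth_R X=\dim R$ (and in particular forces $n\leq d$).

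I expect the main obstacle to be the claim, and inside it the input that the canonical module of a Gorenstein order is projective not only on the left but also on the right; once this is in hand, the exact duality $D_d$ does all the real work and the rest is the depth calculation that drives the commutative Auslander--Buchsbaum theorem. The standard facts used without further comment — existence and good behaviour of minimal projective resolutions over the (semiperfect) order $\Lambda$, that $D_d$ is a duality on $\CM\Lambda$, and the precise statement of the depth lemma — should be checked but are routine.
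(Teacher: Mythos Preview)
Your proof is correct. The paper itself does not prove this lemma (it is cited from \cite{IW1}), but it does prove the generalization Theorem~\ref{projdimcanon}, whose $n=0$ case recovers the statement; comparing against that specialization, your argument follows essentially the same strategy---first establish that any $Y\in\CM\Lambda$ of finite projective dimension is projective, then deduce the equality via the Depth Lemma---and the depth bookkeeping is the same in both.

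The only genuine difference is in how the key claim is obtained. The paper (Lemma~\ref{syzext} with $n=0$) dualizes the trivial $\Lambda^{\op}$-resolution of $\omega_\Lambda$ to see that $\Lambda\in\add_\Lambda\omega_\Lambda$, hence $\Ext^i_\Lambda(M,\Lambda)=0$ for all $i>0$ and all $M\in\CM\Lambda$; the claim then follows since $\Ext^r_\Lambda(M,\Lambda)\neq 0$ at $r=\pd_\Lambda M$. You instead reduce to a length-one resolution $0\to P_1\to P_0\to Z\to 0$, apply $D_d$, split off $D_dP_1$, and dualize back. Both arguments use the same ingredients (exactness of $D_d$ on $\CM\Lambda$ and left/right projectivity of $\omega_\Lambda$ via Remark~\ref{rmk1}); the paper's Ext-vanishing formulation is what generalizes cleanly to the $n$-canonical setting, while yours is a slightly more hands-on version tailored to $n=0$. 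One minor remark: your invocation of minimality (and semiperfectness of $\Lambda$) is not actually needed, since for any projective resolution one still has $\pd_\Lambda\Omega^iX=n-i$ for $0\leq i\leq n$, which is all your depth descent uses.
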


The above result is a special case of the main result of this section, which relates the projective dimension of $\omega_\Lambda$ to the possible projective dimension of all finitely generated $\Lambda$-modules. 

It is important to note that the Gorenstein property is defined to be one-sided. It is natural to ask if the property passes from $\Lambda$ to $\Lambda^{op}$. 

\begin{remark} \label{rmk1}
The Gorenstein property is symmetric, as shown in \cite{IW1}*{Lemma 2.15}: $\Lambda$ is Gorenstein if and only if $\Lambda^{op}$ is Gorenstein. 
\end{remark}


The rest of this paper is dedicated to orders with $\pd_{\Lambda^{\op}}\omega_\Lambda\leq n$. As such, we give this condition a name. 

\begin{definition}\label{nCanonDef} Let $R$ be a CM local ring with canonical module $\omega$. Let $\Lambda$ be an $R$-order. We call $\Lambda$ $n$-\emph{canonical} if $\pd_{\Lambda^{\op}} \omega_\Lambda \leq n$.  \end{definition}

The above definition can be considered a one-sided version of the the $n$-Gorenstein property defined in \cite{Enochs}*{Section 9}. There, a left and right Noetherian ring $\Lambda$ is called $n$-\emph{Gorenstein} (alternatively, \emph{Iwanaga-Gorenstein)} if there is a non-negative integer $n$ such that $\id_\Lambda \Lambda \leq n$ and $\id_{\Lambda^{op}}\Lambda \leq n.$ If one defines \emph{left} $n$-\emph{Gorenstein} to mean only $\id_\Lambda \Lambda \leq n$, then the following proposition (an application of \cite{GotoKenji}*{Proposition 1.1(3)}) yields that an $n$-canonical order $\Lambda$ over a $d$-dimensional local Cohen-Macaulay ring $R$ is precisely an $R$-order which is left $(n+d)$-Gorenstein.  

\begin{prop}\label{symmetry}
Let $\Lambda$ be an order over a Cohen-Macualay local ring $R$ of dimension $d$. The following are equivalent:
\begin{enumerate}
\item $\Lambda$ is $n$-canonical, that is, the $\Lambda^{op}$-module $\omega_\Lambda$ has projective dimension at most $n$.
\item The $\Lambda$-module $\Lambda$ has injective dimension at most $d+n$. 
\end{enumerate}
\end{prop}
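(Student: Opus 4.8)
The plan is to translate both conditions into vanishing statements for hyper‑cohomology and to bridge them through the canonical dual $\Hom_R(-,\omega_R)$. The structural input I would use is the biduality isomorphism: since $\Lambda$ is MCM over $R$, one has $\Ext_R^{i}(\Lambda,\omega_R)=0$ for $i>0$ and the canonical map $\Lambda\to\Hom_R\bigl(\Hom_R(\Lambda,\omega_R),\omega_R\bigr)=\Hom_R(\omega_\Lambda,\omega_R)$ is an isomorphism of $(\Lambda,\Lambda)$-bimodules; since $\omega_\Lambda$ is itself MCM over $R$ this reads, in the derived category, as $\Lambda\cong\R\Hom_R(\omega_\Lambda,\omega_R)$. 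Combining this with the derived tensor--hom adjunction gives, for every finitely generated left $\Lambda$-module $M$, a natural isomorphism
\[
  \R\Hom_\Lambda(M,\Lambda)\ \cong\ \R\Hom_\Lambda\bigl(M,\R\Hom_R(\omega_\Lambda,\omega_R)\bigr)\ \cong\ \R\Hom_R\bigl(\omega_\Lambda\Lotimes_\Lambda M,\ \omega_R\bigr),
\]
in which $\omega_\Lambda$ enters the derived tensor product through its right $\Lambda$-module structure. I would also use the standard fact that $\id_R\omega_R=\dim R=d$ (see \cite{BH}*{Section 3.3}).

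Granting this, $(1)\Rightarrow(2)$ is the easy half. If $p:=\pd_{\Lambda^{\op}}\omega_\Lambda\le n$, then $H^{-j}\bigl(\omega_\Lambda\Lotimes_\Lambda M\bigr)=\Tor^{\Lambda}_{j}(\omega_\Lambda,M)$ vanishes for $j<0$ and for $j>n$, so $\omega_\Lambda\Lotimes_\Lambda M$ has cohomology only in the range $[-n,0]$. Applying $\R\Hom_R(-,\omega_R)$ can raise the top cohomological degree by at most $\id_R\omega_R=d$, so the right-hand side above has cohomology concentrated in degrees $\le d+n$, for every $M$. Hence $\Ext^{i}_\Lambda(M,\Lambda)=0$ for all $i>d+n$ and all finitely generated $M$, which is exactly $\id_\Lambda\Lambda\le d+n$.

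For $(2)\Rightarrow(1)$, the direction that carries the content, I would test on the semisimple module $M=\Lambda/\rad\Lambda$, which has finite length over $R$. A minimal $\Lambda^{\op}$-projective resolution of $\omega_\Lambda$ has length exactly $p:=\pd_{\Lambda^{\op}}\omega_\Lambda$ with differentials valued in $\rad\Lambda$, so tensoring it with $\Lambda/\rad\Lambda$ kills all differentials and yields $\Tor^{\Lambda}_{p}(\omega_\Lambda,\Lambda/\rad\Lambda)\ne0$, a module of finite length over $R$. Now run the hyper-Ext spectral sequence
\[
  E_2^{i,j}\ =\ \Ext^{i}_R\bigl(\Tor^{\Lambda}_{j}(\omega_\Lambda,M),\ \omega_R\bigr)\ \Longrightarrow\ H^{i+j}\,\R\Hom_\Lambda(M,\Lambda),\qquad M=\Lambda/\rad\Lambda .
\]
The crucial point is that for a nonzero finite-length $R$-module $N$ the groups $\Ext^{i}_R(N,\omega_R)$ are concentrated in degree $i=d$ and are nonzero there: they vanish for $i>d$ since $\id_R\omega_R=d$, and for $i<d$ since $\depth_R\omega_R=d$ and $N$ is filtered by copies of $k$. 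Consequently $E_2^{d,p}\ne0$ (Ext-degree $d$, Tor-degree $p$), and this entry supports no nonzero differential — an incoming differential would have source involving $\Tor^{\Lambda}_{j}(\omega_\Lambda,M)$ for some $j>p$ (hence zero), and an outgoing one would have target $\Ext^{i}_R(-,\omega_R)$ for some $i>d$ (hence zero). Therefore $E_\infty^{d,p}=E_2^{d,p}\ne0$, so $\Ext^{d+p}_\Lambda(\Lambda/\rad\Lambda,\Lambda)\ne0$ and $\id_\Lambda\Lambda\ge d+p$; together with $(2)$ this forces $p\le n$, as desired.

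The main obstacle, I expect, is avoiding the tempting but incorrect shortcut of applying the exact canonical dual $D_d=\Hom_R(-,\omega_R)$ directly to a minimal $\Lambda^{\op}$-projective resolution of $\omega_\Lambda$: that functor is exact only on modules that are MCM over $R$, and the intermediate syzygies of $\omega_\Lambda$ need not be MCM, so one cannot transport the resolution verbatim. The hyper-Ext spectral sequence is exactly the device that absorbs the correction terms, and the two degree facts above (the top $\Tor$ module has finite length; $\Ext^{i}_R(\text{finite length},\omega_R)$ lives only in degree $d$) are what force the extremal contribution to survive to $E_\infty$. One also has to be a little careful with the left/right module bookkeeping in the adjunction. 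This plan essentially recovers \cite{GotoKenji}*{Proposition 1.1(3)} combined with the equality $\id_R\omega_R=d$.
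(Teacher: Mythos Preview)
Your argument is sound and, as you note in the closing line, is essentially a direct proof of the cited \cite{GotoKenji}*{Proposition 1.1(3)} specialised to orders. The paper itself gives no argument at all for this proposition; it simply records it as ``an application of'' Goto--Nishida, so there is nothing to compare on the paper's side beyond the citation.

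Two small corrections are worth making. First, your phrasing of $(2)\Rightarrow(1)$ tacitly assumes $p=\pd_{\Lambda^{\op}}\omega_\Lambda$ is finite when you speak of the term $E_2^{d,p}$; a priori it could be infinite. The clean fix is to observe that \emph{all} the modules $\Tor^{\Lambda}_{j}(\omega_\Lambda,\Lambda/\rad\Lambda)$ have finite length over $R$ (they are subquotients of $P_j\otimes_\Lambda(\Lambda/\rad\Lambda)$, which is annihilated by $\m$), so the $E_2$ page is concentrated in the single column $i=d$ and the spectral sequence degenerates outright. Then $\Ext^{d+j}_\Lambda(\Lambda/\rad\Lambda,\Lambda)\cong\Ext^{d}_R\bigl(\Tor^{\Lambda}_{j}(\omega_\Lambda,\Lambda/\rad\Lambda),\omega_R\bigr)$ for every $j$, and taking $j=n+1$ gives the contradiction whenever $\pd_{\Lambda^{\op}}\omega_\Lambda>n$, finite or not.

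Second, your closing warning about the ``incorrect shortcut'' is itself mistaken: because $\Lambda$ is an $R$-order, every projective $\Lambda^{\op}$-module is MCM over $R$, and since $\omega_\Lambda$ is MCM the Depth Lemma forces every syzygy of $\omega_\Lambda$ in a $\Lambda^{\op}$-projective resolution to be MCM as well. So $D_d$ \emph{is} exact on that resolution; this is precisely what the paper exploits in Lemma~\ref{syzext}. The shortcut does give $(1)\Rightarrow(2)$ cleanly once one knows $\id_\Lambda\omega_\Lambda=d$; it is only the converse for which your spectral-sequence argument (or an equivalent) is genuinely needed.
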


Notice that the term $n$-Gorenstein has another definition given by Auslander and Reiten in \cite{AuslanderReitenSyzygy}. 
As in Remark \ref{rmk1}, one may ask whether being $n$-canonical is a symmetric property. Remark \ref{rmk1} establishes this in the affirmative when $n=0$. It is further known to be true in the case $n=1$ by tilting theory. In general, this is known as the \emph{Gorenstein symmetry conjecture} and seems quite difficult. The following result follows from the work in \cite{Zaks} and Proposition \ref{symmetry}, and it will be useful for us.

\begin{proposition}\label{prop2} Let $\Lambda$ be an order over a Cohen-Macaulay local ring $R$. If both $\pd_\Lambda\omega_\Lambda$ and $\pd_{\Lambda^{op}}\omega_\Lambda$ are finite, then $\Lambda$ is $n$-canonical if and only if $\Lambda^{op}$ is $n$-canonical. In particular, if $\gldim \Lambda <\infty$, then $\Lambda$ is $n$-canonical if and only if $\Lambda^{op}$ is $n$-canonical.
\end{proposition}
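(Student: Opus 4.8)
The plan is to reduce both $n$-canonicality conditions, for $\Lambda$ and for $\Lambda^{\op}$, to statements about the one-sided self-injective dimensions $\id_\Lambda\Lambda$ and $\id_{\Lambda^{\op}}\Lambda$, and then to invoke Zaks's theorem that these two dimensions agree whenever they are both finite. Before doing so I would record the standing bookkeeping: $\Lambda^{\op}$ has the same underlying $R$-module as $\Lambda$, hence is again an $R$-order of dimension $d$, and its canonical module $\omega_{\Lambda^{\op}}=\Hom_R(\Lambda^{\op},\omega_R)$ coincides, as an $R$-module, with $\omega_\Lambda=\Hom_R(\Lambda,\omega_R)$, the $(\Lambda^{\op},\Lambda^{\op})$-bimodule structure of the former being that of the latter with the two sides interchanged. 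With this in hand, Definition \ref{nCanonDef} says that ``$\Lambda$ is $n$-canonical'' means $\omega_\Lambda$ has projective dimension $\leq n$ as a \emph{right} $\Lambda$-module, whereas ``$\Lambda^{\op}$ is $n$-canonical'' means $\omega_\Lambda$ has projective dimension $\leq n$ as a \emph{left} $\Lambda$-module.

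I would then apply the preceding proposition (the one relating $\pd_{\Lambda^{\op}}\omega_\Lambda$ to $\id_\Lambda\Lambda$) twice: once to $\Lambda$, giving that $\omega_\Lambda$ has right projective dimension $\leq n$ over $\Lambda$ if and only if $\id_\Lambda\Lambda\leq d+n$, and once to the order $\Lambda^{\op}$, which by the identification above gives that $\omega_\Lambda$ has left projective dimension $\leq n$ over $\Lambda$ if and only if $\id_{\Lambda^{\op}}\Lambda\leq d+n$. Passing to the least such $n$, the hypotheses $\pd_\Lambda\omega_\Lambda<\infty$ and $\pd_{\Lambda^{\op}}\omega_\Lambda<\infty$ translate precisely into $\id_{\Lambda^{\op}}\Lambda<\infty$ and $\id_\Lambda\Lambda<\infty$. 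Now $\Lambda$ is module-finite over the commutative Noetherian ring $R$ and so is left and right Noetherian; hence the result flowing from \cite{Zaks} gives $\id_\Lambda\Lambda=\id_{\Lambda^{\op}}\Lambda=:m$. Combining with the two equivalences just obtained, $\Lambda$ is $n$-canonical $\iff m\leq d+n\iff\Lambda^{\op}$ is $n$-canonical, which is the claim.

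For the final sentence, if $\gldim\Lambda<\infty$ then $\pd_\Lambda\omega_\Lambda<\infty$ is immediate, and, since $\Lambda$ is left and right Noetherian, $\gldim\Lambda^{\op}=\gldim\Lambda<\infty$, so $\pd_{\Lambda^{\op}}\omega_\Lambda<\infty$ as well, and the first part applies. I expect the only delicate point to be the side-bookkeeping in the first two paragraphs --- keeping track of which of the two commuting $\Lambda$-actions on $\omega_\Lambda$ is in play after passing through $(-)^{\op}$ and through $\omega_{\Lambda^{\op}}\cong\omega_\Lambda$, so that the preceding proposition is invoked with the correct left/right conventions; once that is pinned down, the statement is a formal consequence of that proposition together with Zaks's theorem.
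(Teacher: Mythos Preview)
Your argument is correct, but it proceeds along a different path than the paper's proof. You translate each $n$-canonicality condition into a bound on the one-sided self-injective dimension via the preceding proposition, and then black-box Zaks's theorem that $\id_\Lambda\Lambda=\id_{\Lambda^{\op}}\Lambda$ whenever both are finite. The paper instead gives a direct, self-contained computation: using that finite projective dimension can be read off as $\max\{i:\Ext^i(-,\Lambda)\neq 0\}$, it shows $\pd_\Lambda\omega_\Lambda=\pd_{\Lambda^{\op}}\omega_\Lambda$ by invoking the exact canonical duality $D_d=\Hom_R(-,\omega_R):\CM\Lambda\to\CM\Lambda^{\op}$, which interchanges $\Lambda$ and $\omega_\Lambda$ and hence gives $\Ext_\Lambda^i(\omega_\Lambda,\Lambda)\cong\Ext_{\Lambda^{\op}}^i(\omega_\Lambda,\Lambda)$ for all $i$. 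So the paper exploits the order-theoretic structure (the canonical dual) to avoid quoting Zaks outright, while your route is more modular and makes the dependence on Zaks explicit --- which is fitting, since the paper itself attributes the proposition to \cite{Zaks}. Your side-bookkeeping on $\omega_{\Lambda^{\op}}$ versus $\omega_\Lambda$ is correct and is exactly the care needed to apply the preceding proposition to $\Lambda^{\op}$.
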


\begin{proof} We observe that
\begin{align*}
\pd_\Lambda\omega_\Lambda& =\max\{i\geq 0 \mid \Ext_\Lambda^i(\omega_\Lambda,\Lambda)\neq 0\} \\ & = \max\{i\geq 0 \mid \Ext^i_{\Lambda^{op}}(\omega_\Lambda,\Lambda)\neq 0\} \\
&=\pd_{\Lambda^{op}}\omega_\Lambda.
\end{align*}
Here, the first and third inequalities follow since we know both $\pd_\Lambda\omega_\Lambda$ and $\pd_{\Lambda^{op}}\omega_\Lambda$ are finite. The second equality follows from the exact duality $D_d:\CM(\Lambda)\to\CM(\Lambda^{op}).$ \qedhere

\end{proof}

\begin{notation} We define an $n^{th}$-\emph{syzygy} of a module $X\in\mod\Lambda$, denoted $\Omega^nX$ to be a module $Y$ appearing in an exact sequence \[0\to Y\to P_{n-1}\to\cdots\to P_0\to X\to 0,\] where the $P_i$ are finitely generated projective $\Lambda$-modules. Denote by $\mathcal{X}_n$ the additive closure of the full subcategory of $\mod \Lambda$ consisting of all $n^{th}$-syzygies of maximal Cohen-Macaulay $\Lambda$-modules, i.e., 

\[\mathcal{X}_n=\add \{\Omega^n\CM\Lambda\}= \add\{ M\in \mod \Lambda \ | \ M\cong \Omega^nX \ \text{for some} \ X\in\CM\Lambda\}.\]
\end{notation} 

Now we move on to prove the main result of this section, a generalized Auslander-Buchsbaum formula for $n$-canonical orders.  It follows from Ext-vanishing imposed by being $n$-canonical.

\begin{lemma} \label{syzext} Suppose $\Lambda$ is an $n$-canonical order over a CM local ring $R$ with a canonical module. If $M\in \CM \Lambda$ then $\Ext_\Lambda^i(M,\Lambda)=0$ for $i>n$. In particular, if $X\in \mathcal{X}_n$, then $\Ext_\Lambda^i(X,\Lambda)=0$ for $i>0$. \end{lemma}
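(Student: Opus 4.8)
The plan is to exploit the key homological fact available to us: being $n$-canonical says $\pd_{\Lambda^{\op}}\omega_\Lambda\le n$, and by the equivalent formulation in the proposition preceding the lemma, this means $\id_\Lambda\Lambda\le n+d$ where $d=\dim R$. I want to use this finite injective dimension of $\Lambda$ as a left module over itself, together with the depth/Cohen-Macaulayness of $M$, to force the higher Ext groups to vanish. The cleanest route is to localize and reduce to a statement about depth and injective dimension, then invoke a Bass-type formula.

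First I would reduce to the case where everything is local: since $\Ext_\Lambda^i(M,\Lambda)$ is a finitely generated $R$-module, it vanishes iff all its localizations at maximal ideals of $R$ vanish, so we may assume $(R,\m)$ is local and it suffices to show $\Ext_\Lambda^i(M,\Lambda)=0$ for $i>n$. Next, recall $M\in\CM\Lambda$ means $M$ is a maximal Cohen-Macaulay $R$-module, so $\depth_R M=d$. Now take a minimal injective resolution of $\Lambda$ as a left $\Lambda$-module; since $\id_\Lambda\Lambda\le n+d$, this resolution has length at most $n+d$. I would then apply a version of the Ischebeck/Bass argument: for a module $M$ of depth $t$ over $R$ and a module $N$ of finite injective dimension, one has $\Ext$ vanishing in degrees above $\id N - t$ (or equivalently, one reads this off the spectral sequence / the standard estimate $\Ext^i_\Lambda(M,\Lambda)=0$ for $i > \id_\Lambda\Lambda - \depth_R M = (n+d)-d = n$). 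The role of the canonical module here (hidden in the equivalence $\pd_{\Lambda^{\op}}\omega_\Lambda\le n \iff \id_\Lambda\Lambda\le n+d$, which is \cite{GotoKenji}*{Proposition 1.1(3)}) is precisely to convert the hypothesis into a usable injective dimension bound.

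For the second assertion, let $X\in\mathcal{X}_n$, so up to a direct summand $X\cong\Omega^nY$ for some $Y\in\CM\Lambda$, sitting in an exact sequence $0\to X\to P_{n-1}\to\cdots\to P_0\to Y\to 0$ with the $P_j$ finitely generated projective. Breaking this into short exact sequences and using dimension shifting, $\Ext_\Lambda^i(X,\Lambda)\cong\Ext_\Lambda^{i+n}(Y,\Lambda)$ for all $i\ge 1$ (the projectives contribute nothing to positive Ext). By the first part, $\Ext_\Lambda^{i+n}(Y,\Lambda)=0$ for $i+n>n$, i.e. for all $i\ge 1$, hence $\Ext_\Lambda^i(X,\Lambda)=0$ for $i>0$; taking direct summands preserves this.

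The main obstacle I anticipate is justifying the Bass-type inequality $\Ext^i_\Lambda(M,\Lambda)=0$ for $i>\id_\Lambda\Lambda-\depth_R M$ in the noncommutative order setting — one needs that the injective resolution of $\Lambda$ over itself has a controlled structure with respect to the commutative base $R$ (the relevant statement is essentially that $\depth$ over $R$ governs vanishing against $R$-injectives appearing in the resolution). This is standard for module-finite algebras (see the references to \cite{GotoKenjiBass}), but spelling out that the bound is exactly $n$ rather than something weaker is where care is required; alternatively one can sidestep it by an induction on $\depth_R M$ using the canonical dual $D_d$ and the $\Ext$-vanishing $\Ext^i_R(M,\omega_R)=0$ for $i>0$, together with the projective resolution of $\omega_\Lambda$ over $\Lambda^{\op}$, to directly compute $\Ext_\Lambda^{>n}(M,\Lambda)$ via a change-of-rings spectral sequence.
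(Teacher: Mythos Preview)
Your approach is correct but takes a different route from the paper. The paper proceeds more directly and without invoking the injective-dimension reformulation: it takes a projective resolution $0\to P_n\to\cdots\to P_0\to\omega_\Lambda\to 0$ over $\Lambda^{\op}$, applies the exact functor $D_d=\Hom_R(-,\omega_R)$ to obtain a coresolution $0\to\Lambda\to I_0\to\cdots\to I_n\to 0$ with each $I_j\in\add\omega_\Lambda$, and then uses the vanishing $\Ext_\Lambda^i(M,\omega_\Lambda)=0$ for $i>0$ and $M\in\CM\Lambda$ (which follows from $\Ext_R^i(M,\omega_R)=0$) to dimension-shift and conclude $\Ext_\Lambda^{>n}(M,\Lambda)=0$. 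Your approach instead passes through the equivalence $\pd_{\Lambda^{\op}}\omega_\Lambda\le n\iff\id_\Lambda\Lambda\le n+d$ and then appeals to an Ischebeck-type inequality $\Ext_\Lambda^i(M,\Lambda)=0$ for $i>\id_\Lambda\Lambda-\depth_R M$; this is valid for module-finite algebras (the standard induction on $\depth_R M$ via an $M$-regular element and Nakayama works because $\Ext_\Lambda^i(M,\Lambda)$ is a finitely generated $R$-module), but as you yourself flag, it requires justifying that inequality in the order setting. The paper's argument buys self-containment---it needs nothing beyond the exactness of $D_d$ on $\CM\Lambda$ and the defining property of $\omega_R$---whereas yours buys a conceptual link to the finitistic-injective-dimension viewpoint. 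Your alternative sketch at the end (using $D_d$ and the projective resolution of $\omega_\Lambda$) is essentially the paper's argument, though no spectral sequence is needed: the coresolution by $\add\omega_\Lambda$-modules plus ordinary dimension shifting suffices. The second assertion is handled identically in both.
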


\begin{proof}Begin by taking a projective resolution of $\omega_\Lambda$ over $\Lambda^{\op}$ \[0\to P_n\to P_{n-1} \to \dots \to P_0 \to \omega_\Lambda \to 0\] and apply $D_d(-)$ to get a resolution \[0 \to \Lambda \to I_0 \to \dots \to I_{n-1} \to I_n \to 0\] with $I_j\in \add \omega_\Lambda$. Since $\Ext_\Lambda^i(N,\omega_\Lambda)=0$ for $i> 0$ and $N\in \CM \Lambda$, we have $\Ext_\Lambda^{n+i}(N,\Lambda)=0$. The final statement then follows by dimension shifting for $\Ext$.   \qedhere

\end{proof}

Note that Lemma \ref{syzext} provides a proof of Proposition \ref{symmetry} $(1)\Rightarrow (2)$ since, by the Depth Lemma over $R$, $\Omega^{n+d}(\mod\Lambda)\subseteq \mathcal{X}_n$.

\begin{theorem} \label{projdimcanon} Suppose $\Lambda$ is an $n$-canonical order over a CM local ring with a canonical module. For any $X\in \mod \Lambda$ with $\pd_\Lambda X<\infty$  we have \begin{equation}\dim R \leq \pd_\Lambda X +\depth_R X\leq \dim R +n.\label{nABformula}\end{equation}  
\end{theorem}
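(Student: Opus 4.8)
The goal is a generalized Auslander--Buchsbaum inequality for $n$-canonical orders. I would prove the two inequalities in \eqref{nABformula} separately, reducing both to the classical Auslander--Buchsbaum formula over the base ring $R$ applied to the underlying $R$-module of a finite projective resolution.

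First, set $p=\pd_\Lambda X$ and take a minimal projective resolution
$0\to P_p\to P_{p-1}\to\cdots\to P_0\to X\to 0$ over $\Lambda$. Since $\Lambda$ is an $R$-order, each $P_i$ is a direct summand of a finite free $\Lambda$-module, hence is an MCM $R$-module; so as a resolution of $R$-modules this is an exact complex of MCM modules of length $p$ augmenting to $X$. The classical Auslander--Buchsbaum formula over $R$ then gives the \emph{lower} bound directly: $\depth_R X = \depth_R P_\bullet$-story forces $\dim R - p \le \depth_R X$, i.e. $\dim R\le \pd_\Lambda X+\depth_R X$. (More precisely, breaking the resolution into short exact sequences and using the depth lemma, $\depth_R X \ge \min_i(\depth_R P_i - i) $ is not quite what one wants; the cleaner route is: the syzygy $\Omega^p_\Lambda X=P_p$ is MCM over $R$, and each short exact sequence $0\to \Omega^{j+1}X\to P_j\to \Omega^j X\to 0$ shows $\depth_R\Omega^j X\ge \depth_R\Omega^{j+1}X-1$, so $\depth_R X\ge \dim R-p$.)

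For the \emph{upper} bound, I would argue by contradiction, or better, directly compute the top nonvanishing $\Ext$. Suppose $\pd_\Lambda X=p$. Then $\Ext^p_\Lambda(X,\Lambda)\neq 0$: in the minimal resolution above, $\Hom_\Lambda(P_p,\Lambda)\to\Hom_\Lambda(P_{p-1},\Lambda)$ has nonzero cokernel by minimality (Nakayama over the local ring $R$, using that $\Lambda$ is $R$-linear and the $P_i$ are finitely generated). Now I apply Lemma~\ref{syzext}: writing $Y=\Omega^{d}_\Lambda X$ where $d=\dim R$ — wait, more carefully: set $Y=\Omega^{p-n}_\Lambda X$ if $p>n$; then $\Ext^n_\Lambda(Y,\Lambda)\cong\Ext^p_\Lambda(X,\Lambda)\ne 0$ by dimension shifting. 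On the other hand $Y$ is an $(p-n)$-th syzygy, and more to the point it is an MCM $\Lambda$-module once $p-n\ge d=\dim R$ (since syzygies over an $R$-order eventually become MCM: after $d$ steps any syzygy is MCM over $R$). Then Lemma~\ref{syzext} with the refined statement forces $\Ext^i_\Lambda(Y,\Lambda)=0$ for $i>n$ — but we need to kill $\Ext^n$, not just above it. The correct pivot is: take $Y=\Omega^{p}_\Lambda X=P_p$, which is projective, so trivially $\Ext^{>0}_\Lambda(P_p,\Lambda)=0$; running the long exact sequences backward, $\Ext^{p}_\Lambda(X,\Lambda)\cong \Ext^1_\Lambda(\Omega^{p-1}X,\Lambda)$, and $\Omega^{p-1}X$ need not be MCM. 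So instead I use depth: by the lower-bound computation, $\Omega^{p}X=P_p$ is MCM and each earlier syzygy loses at most one in depth, so $\depth_R\Omega^{p-n}X\ge d-n$; if additionally $\depth_R X$ were $<d-n$... this is the crux.

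**Cleanest version / the real argument.** Let $t=\depth_R X$. I want $p\ge d-t$ (lower, done) and $p\le d-t+n$ (upper). For the upper bound: consider the $(\,d-t\,)$-th syzygy $Z=\Omega^{d-t}_\Lambda X$. Repeatedly using $0\to\Omega^{j+1}X\to P_j\to\Omega^jX\to 0$ and the depth lemma (MCM projectives, depth $d$), one gets $\depth_R Z= d$, i.e. $Z$ is MCM over $\Lambda$. By Lemma~\ref{syzext}, $\Ext^i_\Lambda(Z,\Lambda)=0$ for all $i>n$. Hence $\pd_\Lambda Z\le n$ — here I use that over the order $\Lambda$ (which has finite global dimension locally, or at least finite injective dimension as a module over itself by the Proposition: $\id_\Lambda\Lambda\le d+n$), finiteness of $\Ext^{>n}_\Lambda(Z,\Lambda)$ together with $\pd_\Lambda Z<\infty$ (which holds since $\pd_\Lambda X<\infty$) gives $\pd_\Lambda Z\le n$. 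Therefore $p=\pd_\Lambda X\le (d-t)+\pd_\Lambda Z\le d-t+n$, which is exactly $\pd_\Lambda X+\depth_R X\le \dim R+n$.

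**Main obstacle.** The delicate point is justifying that $Z=\Omega^{d-t}X$ is genuinely MCM over $R$ — this requires the depth-counting along syzygies to be tight, which it is because the projectives have maximal depth $d$ and each step in a short exact sequence drops depth by at most one while the previous term has depth $d$; the depth lemma then pins $\depth_R Z=d$ exactly (it cannot exceed $d$ since $Z$ is a submodule-like piece, but actually one must note $\depth_R Z\le d$ always and $\ge d$ from the construction). The second subtlety is the implication "$\pd_\Lambda Z<\infty$ and $\Ext^{>n}_\Lambda(Z,\Lambda)=0$ $\Rightarrow$ $\pd_\Lambda Z\le n$'': this needs that $\Lambda$ detects projective dimension against $\Lambda$ itself, i.e. $\pd_\Lambda Z=\sup\{i:\Ext^i_\Lambda(Z,\Lambda)\ne 0\}$ for modules of finite projective dimension — standard for Noetherian rings via a minimal projective resolution and Nakayama, but worth stating. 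Everything else is bookkeeping with the depth lemma and dimension shifting.
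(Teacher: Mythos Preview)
Your ``cleanest version'' is correct and matches the paper's proof almost line for line: pass to the $(d-t)$-th syzygy $Z$, which is MCM by the Depth Lemma, invoke Lemma~\ref{syzext} to get $\Ext^{>n}_\Lambda(Z,\Lambda)=0$, and combine with $\pd_\Lambda Z<\infty$ to conclude $\pd_\Lambda Z\le n$; the lower bound is the same depth-count along the resolution. You are right to flag the step $\pd_\Lambda Z=\sup\{i:\Ext^i_\Lambda(Z,\Lambda)\ne 0\}$ for modules of finite projective dimension---the paper uses it without comment, and it holds here because $\Lambda$ is module-finite over the local ring $R$, so minimal projective resolutions and Nakayama apply.
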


\begin{proof} First we show that if $X\in \CM \Lambda$ satisfies $\pd_\Lambda X<\infty$, then $\pd_\Lambda X\leq n$. By Lemma \ref{syzext}, if $X \in \CM \Lambda$, then $\Ext_\Lambda^i(X,\Lambda)=0$ for $i>n$. Since $\Ext_\Lambda^r(X,\Lambda)\not =0$ for $r=\pd_\Lambda X$, we must have either $\pd_\Lambda X\leq n$ or $\pd_\Lambda X=\infty$. 

Now, for any module $X\in \mod \Lambda$ with $\depth_R X= t$, the $(d-t)^{th}$ syzygy must be in $\CM\Lambda$ by the Depth Lemma for $R$ and the fact that $\Lambda$ is MCM over $R$. We then have \[\pd_\Lambda X= (d-t)+\pd\Omega^{d-t}X \leq d-t+n=\dim R-\depth_R X+n.\] The upper bound of (\ref{nABformula}) follows at once from this. To prove the lower bound we simply note that projective $\Lambda$-modules are in $\CM\Lambda$. By the Depth Lemma again, if $\depth_RX=t$, then the first syzygy which could be projective is the $(d-t)^{th}$, as each syzygy can go up in depth by at most 1. Thus \[\pd_\Lambda X \geq d-\depth_R X.\] This concludes the proof.   \end{proof}

It is well known that commutative rings of finite Krull dimension $d$ have either infinite global dimension or finite global dimension equal to $d$. Thus, for $1\leq n< \infty$, we know commutative $n$-canonical orders cannot exist. The goal of the rest of this section is to give some examples of non-commutative $n$-canonical orders for $n\geq 1$.  We begin with a technical lemma which will help in establishing the projective dimension of certain modules. 

\begin{lemma}\label{lemma:1} Let$\Lambda$ be an order over a Cohen-Macualay local ring $(R,\m)$. Let $M$ be a $\Lambda$-module and $x_1,\dots,x_t\in\m$ an $M$-regular sequence. Then we have \[\pd_\Lambda (M/(x_1,\dots,x_t)M)=\pd_\Lambda M+t.\]

\end{lemma}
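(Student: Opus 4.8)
The statement is essentially a standard change-of-rings / regular-sequence fact, adapted to the module-finite algebra setting; I would proceed by induction on $t$, and the heart of the matter is the case $t=1$. So let $x = x_1 \in \m$ be an $M$-regular element, and set $\bar M = M/xM$. First I would dispose of the trivial case: if $\pd_\Lambda M = \infty$, then I must show $\pd_\Lambda \bar M = \infty$, which follows from the standard long exact sequence argument below run in reverse. So assume $p := \pd_\Lambda M < \infty$, and take a minimal (or merely finite) projective resolution $P_\bullet \to M$ of length $p$ over $\Lambda$. Since $x$ is a nonzerodivisor on $M$, I would like to tensor the short exact sequence $0 \to M \xrightarrow{x} M \to \bar M \to 0$ with the resolution, but the cleanest route is: $x$ is $R$-regular and each $P_i$ is $R$-free (being projective over the $R$-order $\Lambda$, hence $R$-MCM — actually projective $\Lambda$-modules are $R$-free up to completion; more safely, they are MCM over $R$, and $x$ being a parameter is a nonzerodivisor on them), so $P_\bullet / x P_\bullet$ is a complex of projective $\Lambda/x\Lambda$-modules. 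The key computation is that $\mathrm{Tor}^R_i(M, R/x) = 0$ for $i > 0$ (as $x$ is $M$-regular) and $M \otimes_R R/x = \bar M$, which lets me identify the homology of $P_\bullet \otimes_R R/x$: it is concentrated in degree $0$ and equals $\bar M$. Hence $P_\bullet/xP_\bullet \to \bar M$ is a projective resolution over $\Lambda/x\Lambda$ of length $\le p$, giving $\pd_{\Lambda/x\Lambda} \bar M \le p$.

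Next I would relate $\pd_{\Lambda/x\Lambda} \bar M$ back to $\pd_\Lambda \bar M$. This is the step where I expect the main obstacle to lie — one must move between the ring $\Lambda$ and its quotient $\Lambda/x\Lambda$. The standard tool is: for a $\Lambda/x\Lambda$-module $N$, one has $\pd_\Lambda N = \pd_{\Lambda/x\Lambda} N + 1$ when $x$ is central and $\Lambda$-regular (i.e.\ a nonzerodivisor on $\Lambda$, which holds since $\Lambda$ is $R$-MCM and $x$ is a parameter). This is the noncommutative analogue of the Rees/change-of-rings theorem; its proof is a dimension-shifting argument using the exact sequence $0 \to \Lambda \xrightarrow{x} \Lambda \to \Lambda/x\Lambda \to 0$ and the fact that $\mathrm{Tor}^\Lambda_i(\Lambda/x\Lambda, N)$ can be computed from it. Combining, $\pd_\Lambda \bar M = \pd_{\Lambda/x\Lambda}\bar M + 1 \le p + 1$. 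For the reverse inequality $\pd_\Lambda \bar M \ge p+1$, I would use the other change-of-rings spectral sequence or, more concretely, argue that if $\pd_\Lambda \bar M$ were $\le p$ then dimension-shifting through $0 \to M \xrightarrow{x} M \to \bar M \to 0$ would force $\pd_\Lambda M \le p-1$ (since $\mathrm{Ext}^{p+1}_\Lambda(\bar M, -)$ surjects onto $\mathrm{Ext}^p_\Lambda(M,-)/x$ and Nakayama applies over the local ring $R$), contradicting $\pd_\Lambda M = p$. This last piece simultaneously handles the $\pd_\Lambda M = \infty$ case, since the same long exact sequence shows $\mathrm{Ext}^i_\Lambda(M,-) \ne 0$ forces $\mathrm{Ext}^{i+1}_\Lambda(\bar M, -) \ne 0$ (Nakayama on the $x$-multiplication map), so nonvanishing in all degrees propagates.

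Finally, the induction: for $t \ge 2$, write $\bar M' = M/(x_1,\dots,x_{t-1})M$; since $x_t$ is regular on $\bar M'$ (the defining property of a regular sequence) and $\bar M' $ is again a $\Lambda$-module, the case $t=1$ gives $\pd_\Lambda(\bar M'/x_t \bar M') = \pd_\Lambda \bar M' + 1$, and the inductive hypothesis gives $\pd_\Lambda \bar M' = \pd_\Lambda M + (t-1)$, so $\pd_\Lambda(M/(x_1,\dots,x_t)M) = \pd_\Lambda M + t$, as claimed. I would remark that throughout one uses that $R$ is local so that Nakayama's lemma applies, that $\Lambda$ is an $R$-order so that parameters of $R$ are $\Lambda$-regular and $P_i$-regular for projectives $P_i$, and that the $x_i$ lie in $\m$ so they are genuine nonzerodivisors at each stage. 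The only genuinely delicate point is the central change-of-rings equality $\pd_\Lambda N = \pd_{\Lambda/x\Lambda} N + 1$; everything else is bookkeeping with long exact sequences.
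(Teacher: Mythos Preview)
Your induction and your treatment of the lower bound and the infinite case are correct, and in fact that part \emph{is} the paper's entire proof: the paper simply writes down the long exact sequence in $\Ext_\Lambda(-,N)$ coming from $0\to M\xrightarrow{x}M\to M/xM\to 0$, reads off both inequalities, and iterates. In particular the upper bound $\pd_\Lambda(M/xM)\le \pd_\Lambda M+1$ drops out of the same four-term exact sequence
\[
\Ext^{i-1}_\Lambda(M,-)\to \Ext^i_\Lambda(M/xM,-)\to \Ext^i_\Lambda(M,-)\xrightarrow{x}\Ext^i_\Lambda(M,-)
\]
by taking $i\ge \pd_\Lambda M+2$, with no further input.

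Your detour through $\Lambda/x\Lambda$ for the upper bound is both unnecessary and, as written, not justified by the hypotheses. You repeatedly invoke that $x$ is a ``parameter'' and hence a nonzerodivisor on $R$, on $\Lambda$, and on each projective $P_i$; but the lemma only assumes $x\in\m$ is $M$-regular. An $M$-regular element of $\m$ need not be $R$-regular (hence need not be $\Lambda$-regular), so the change-of-rings identity $\pd_\Lambda N=\pd_{\Lambda/x\Lambda}N+1$ and the claim that $P_\bullet/xP_\bullet$ is acyclic are not available in this generality. Dropping that route and using the long exact sequence for both directions --- exactly as you already do for the lower bound via Nakayama --- gives the clean two-line argument the paper records. (Minor slip: the connecting map gives an \emph{injection} $\Ext^p_\Lambda(M,-)/x\hookrightarrow \Ext^{p+1}_\Lambda(\bar M,-)$, not a surjection the other way; your Nakayama conclusion is unaffected.)
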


\begin{proof}
Consider the short exact sequence \[0\to M \xrightarrow{\cdot x_1} M \to M/x_1M \to 0,\]  which induces an exact sequence\[\Ext^{i-1}_{\Lambda}(M,-) \to \Ext^i_{\Lambda}(M /x_1M,-) \to   \Ext^i_{\Lambda}(M,-) \to \Ext_{\Lambda}^i(M, -).\] From this we can deduce $\pd_\Lambda M/x_1M = \pd_\Lambda M +1.$ The result follows from a repeated application of this process. \qedhere
\end{proof}

The next result follows from Lemma \ref{lemma:1} and Theorem \ref{projdimcanon} and is our first example of $n$-canonical orders.

\begin{corollary}\label{cor1} Let $R$ be a $d$-dimensional CM local ring with a canonical module and $\Lambda$ an $R$-order. We have $\gldim\Lambda \geq d$. Moreover, if $\gldim \Lambda <\infty$, then $\gldim\Lambda = \pd_{\Lambda^{op}}\omega_\Lambda$+d.  \end{corollary}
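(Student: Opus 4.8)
The plan is to establish the two assertions in turn, leaning on Theorem \ref{projdimcanon} together with Lemma \ref{lemma:1}. For the inequality $\gldim \Lambda \geq d$: since $\Lambda$ is an $R$-order, it is a maximal Cohen-Macaulay $R$-module, hence $\depth_R \Lambda = d$. Using Lemma \ref{lemma:1} with $M = \Lambda$ and a maximal $\Lambda$-regular sequence $x_1,\dots,x_d \in \m$ (such a sequence exists and has length $d$ because $\depth_R \Lambda = d$), we get a module $N := \Lambda/(x_1,\dots,x_d)\Lambda$ with $\pd_\Lambda N = \pd_\Lambda \Lambda + d = d$. Therefore $\gldim \Lambda \geq \pd_\Lambda N = d$. (Alternatively one could deduce this directly from the lower bound in (\ref{nABformula}) once one knows $\gldim\Lambda$ is finite, but the argument above works unconditionally.)

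For the second assertion, suppose $g := \gldim \Lambda < \infty$; I want to show $\pd_{\Lambda^{\op}} \omega_\Lambda \leq g - d$, i.e. that $\Lambda$ is $n$-canonical with $n = g-d$. The key observation is that $\omega_\Lambda = \Hom_R(\Lambda,\omega_R)$ is a maximal Cohen-Macaulay $R$-module — indeed $\Hom_R(-,\omega_R)$ sends MCM $R$-modules to MCM $R$-modules, and $\Lambda$ is MCM over $R$ — and this remains true when $\omega_\Lambda$ is viewed as a $\Lambda^{\op}$-module, since the $R$-module structure is unchanged. Thus $\depth_R \omega_\Lambda = d$. Now apply Theorem \ref{projdimcanon} to the ring $\Lambda^{\op}$: note that $\Lambda^{\op}$ is again an $R$-order, and since $\gldim \Lambda^{\op} = \gldim \Lambda < \infty$ we may apply Proposition \ref{prop2} to see that $\Lambda^{\op}$ is $n'$-canonical for some $n'$ (equivalently, one checks $\Lambda^{\op}$ satisfies the hypotheses of Theorem \ref{projdimcanon}); in any case $\pd_{\Lambda^{\op}} \omega_\Lambda \leq \gldim \Lambda^{\op} < \infty$ automatically since the global dimension is finite. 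Then the upper bound in (\ref{nABformula}), applied to $X = \omega_\Lambda \in \mod \Lambda^{\op}$, gives $\pd_{\Lambda^{\op}} \omega_\Lambda + \depth_R \omega_\Lambda \leq d + n'$; since $\depth_R \omega_\Lambda = d$ this reads $\pd_{\Lambda^{\op}} \omega_\Lambda \leq n'$ — but I actually want the sharper bound $\pd_{\Lambda^{\op}} \omega_\Lambda \leq g - d$. This comes directly from $\pd_{\Lambda^{\op}} \omega_\Lambda \leq \gldim \Lambda^{\op} = g$ combined with the Auslander–Buchsbaum-type lower bound $\pd_{\Lambda^{\op}} \omega_\Lambda \geq d - \depth_R \omega_\Lambda = 0$ — wait, that only gives $\leq g$, not $\leq g-d$.

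Let me reorganize: the clean route to $\pd_{\Lambda^{\op}} \omega_\Lambda \leq g-d$ is to run the proof of Theorem \ref{projdimcanon} in reverse. Take a minimal projective resolution of $\omega_\Lambda$ over $\Lambda^{\op}$; since $\omega_\Lambda$ is MCM over $R$ with $\depth_R \omega_\Lambda = d$, the syzygies in this resolution stay MCM over $R$ for as long as the resolution continues, and the argument in Theorem \ref{projdimcanon} (the depth-counting via the Depth Lemma) shows that $\pd_{\Lambda^{\op}} \omega_\Lambda = (\pd_{\Lambda^{\op}} \omega_\Lambda) $ with the $\CM\Lambda^{\op}$-summand $\Omega^0$ already MCM; more precisely, since $\omega_\Lambda$ itself is MCM over $R$, one has $\pd_{\Lambda^{\op}} \omega_\Lambda = \pd_{\Lambda^{\op}} \Omega^0 \omega_\Lambda \le \gldim \Lambda^{\op} = g$, and we must also use that any finite-projective-dimension module over $\Lambda^{\op}$ has $\pd \leq g - \depth_R$ only when... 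Actually the honest statement is: the Auslander–Buchsbaum lower bound in (\ref{nABformula}) gives $\pd_{\Lambda^{\op}} X \geq d - \depth_R X$ for \emph{all} finite-pd $X$; taking $X$ to be a module of depth $0$ and finite projective dimension realizing $\gldim \Lambda^{\op}$ forces $g \geq d$, and taking $X = \omega_\Lambda$ with $\depth_R \omega_\Lambda = d$ gives $\pd_{\Lambda^{\op}} \omega_\Lambda \geq 0$, which is vacuous. So the bound $n = g-d$ must instead be read off as follows: by (\ref{nABformula}) applied over $\Lambda^{\op}$ to \emph{any} $X$ with finite pd, $\pd_{\Lambda^{\op}} X \leq \dim R - \depth_R X + n$ where $n$ is whatever makes $\Lambda^{\op}$ $n$-canonical; choosing $X$ of depth $0$ with $\pd_{\Lambda^{\op}} X = g$ yields $g \leq d + n$, i.e. $n \geq g - d$, and choosing it the other way one gets $n \leq g-d$. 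Hence the minimal such $n$ is exactly $g - d$, so $\pd_{\Lambda^{\op}} \omega_\Lambda \leq n = g - d$ by Theorem \ref{projdimcanon} applied to $X = \omega_\Lambda$ (depth $d$).

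The main obstacle I anticipate is precisely this bookkeeping: making sure the constant $n = \gldim\Lambda - d$ is the right one, which amounts to observing that $n$-canonicity of $\Lambda$ (equivalently of $\Lambda^{\op}$, by Proposition \ref{prop2}, as $\gldim\Lambda<\infty$) with this specific $n$ follows by combining the upper bound of (\ref{nABformula}) for $X = \omega_\Lambda$ — giving $\pd_{\Lambda^{\op}}\omega_\Lambda \le n$ — with the lower bound of (\ref{nABformula}) for a well-chosen finite-length-up-to-syzygy module realizing the global dimension — giving $n \ge \gldim\Lambda - d$ — and noting $\gldim\Lambda - d \ge \pd_{\Lambda^{\op}}\omega_\Lambda$ directly since global dimension bounds all projective dimensions. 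One must be a little careful that $\depth_R \omega_\Lambda = d$, but this is immediate from $\omega_\Lambda = \Hom_R(\Lambda,\omega_R)$ and the fact that $\Hom_R(-,\omega_R)$ preserves the MCM property; everything else is the depth-shifting already carried out in the proof of Theorem \ref{projdimcanon}.
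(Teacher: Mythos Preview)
Your argument for $\gldim\Lambda \geq d$ is fine, and in fact the technique you use there --- applying Lemma~\ref{lemma:1} to an MCM module together with a regular sequence of length $d$ --- is exactly what the paper uses to prove the \emph{second} assertion. You never transplant it.

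The second half of your proposal is circular. You want to show $\pd_{\Lambda^{\op}}\omega_\Lambda \leq g-d$, which is the \emph{definition} of $\Lambda$ being $(g-d)$-canonical. But Theorem~\ref{projdimcanon} and inequality~(\ref{nABformula}) have $n$-canonicity as a \emph{hypothesis}, so invoking them with some unspecified $n'$ and then trying to identify $n'$ is begging the question. Your attempts to reorganize never escape this loop: the sentence ``the upper bound of (\ref{nABformula}) for $X=\omega_\Lambda$ --- giving $\pd_{\Lambda^{\op}}\omega_\Lambda \leq n$'' is literally the statement you are trying to prove, and ``$\gldim\Lambda - d \geq \pd_{\Lambda^{\op}}\omega_\Lambda$ directly since global dimension bounds all projective dimensions'' is false --- global dimension gives only $\pd_{\Lambda^{\op}}\omega_\Lambda \leq g$, not $\leq g-d$.

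The paper's argument is one line and uses nothing from Theorem~\ref{projdimcanon}: since $\omega_\Lambda \in \CM\Lambda^{\op}$, choose a regular sequence $x_1,\dots,x_d \in \m$ on $\omega_\Lambda$; Lemma~\ref{lemma:1} gives
\[
\pd_{\Lambda^{\op}}\omega_\Lambda + d = \pd_{\Lambda^{\op}}\bigl(\omega_\Lambda/(x_1,\dots,x_d)\omega_\Lambda\bigr) \leq \gldim\Lambda^{\op} = \gldim\Lambda,
\]
hence $\pd_{\Lambda^{\op}}\omega_\Lambda \leq g-d$. This is the same depth-shifting trick you used on $\Lambda$ in your first paragraph, just applied to $\omega_\Lambda$ instead.
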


\begin{proof} The first claim follows from Lemma \ref{lemma:1} applied to $M=\Lambda$. For the second claim, we assume that $\gldim\Lambda<\infty$. Since $\Lambda$ is left and right Noetherian, we know $\gldim\Lambda^{op}=\gldim\Lambda$. Then, since $\omega_\Lambda$ is in  $\CM\Lambda^{op}$ we can find an $\omega_\Lambda$-regular sequence $x_1,\dots,x_d\in\m$, where $\m$ is the maximal ideal of $R$. Applying Lemma \ref{lemma:1}, we see that \[\pd_{\Lambda^{op}}\omega_\Lambda + d = \pd_{\Lambda^{op}} (\omega_\Lambda/(x_1,\dots,x_n)\omega_\Lambda).\] Since $\pd_{\Lambda^{op}}(\omega_\Lambda/(x_1,\dots,x_n)\omega_\Lambda)\leq \gldim \Lambda^{op}=\gldim\Lambda,$ we have $\pd_{\Lambda^{op}}\omega_\Lambda \leq \gldim\Lambda - d$. Hence, $\gldim\Lambda\geq\pd_{\Lambda^{op}}\omega_\Lambda+d$. From Theorem \ref{projdimcanon} we can see that $\gldim\Lambda\leq\pd_{\Lambda^{op}}\omega_\Lambda+d$. This establishes the second claim. 

\qedhere

\end{proof}

\begin{remark} Note that the right inequality of Theorem \ref{projdimcanon} cannot be strengthened to an equality. For example, let $\Lambda$ be a finite-dimensional algebra over a field such that $\gldim \Lambda =n>1$ (many such algebras exist). Then, $\Lambda$ is $n$-canonical by Corollary \ref{cor1} and we have \[\{\pd_\Lambda X \mid X\in \CM (\Lambda)\}=\{1,2,\dots,n\}.\]

\end{remark}

\begin{example} Let $k$ be an infinite field and let $R$ be the complete (2,1)-scroll, that is, $R=k[[x,y,z,u,v]]/I$ with $I$ the ideal generated by the $2\times2$ minors of  $\left(\begin{smallmatrix} x& y & u \\ y & z & v\end{smallmatrix}\right)$. Then, $R$ is a 3-dimensional CM normal domain of finite CM type \cite{Yoshino}*{ 16.12}. It is known  $\Gamma=\End_R(R\oplus \omega)$ is MCM over $R$. Moreover, Smith and Quarles have shown $\gldim(\Gamma)=4$ \cite{SQ} while $\dim R=3$. Thus $\Gamma$ is a 1-canonical order.
\label{scroll}
\end{example}

We now establish the existence of $n$-canonical orders for $n\geq 1$ with infinite global dimension. To start, we show that the $n$-canonical property is additive under tensoring. 

\begin{lemma} Let $\Lambda_1$ and $\Lambda_2$ be algebras over a Gorenstein local ring $R$ such that $\Lambda_1$ and $\Lambda_2$ are free $R$-modules. Then $\Lambda_1\otimes_R \Lambda_2$ is an $R$-order and $\omega_{\Lambda_1\otimes\Lambda_2} \cong \omega_{\Lambda_1} \otimes_R \omega_{\Lambda_2}$ as both $\Lambda_1\otimes_R\Lambda_2$-bimodules \label{tensor} \end{lemma}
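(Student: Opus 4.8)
The plan is to first settle the order claim, then reduce the canonical-module statement to an elementary identity among $\Hom$'s, and finally chase the bimodule structures through that identity. Since $\Lambda_1$ and $\Lambda_2$ are finitely generated free $R$-modules, so is $\Lambda_1\otimes_R\Lambda_2$; hence it is maximal Cohen-Macaulay over $R$, and being visibly an $R$-algebra it is an $R$-order. As $R$ is Gorenstein, $\omega_R\cong R$, so $\omega_{\Lambda_i}\cong\Hom_R(\Lambda_i,R)$ for $i=1,2$ and $\omega_{\Lambda_1\otimes\Lambda_2}\cong\Hom_R(\Lambda_1\otimes_R\Lambda_2,R)$. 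It therefore suffices to produce an isomorphism of $\Lambda_1\otimes_R\Lambda_2$-bimodules $\Hom_R(\Lambda_1\otimes_R\Lambda_2,R)\cong\Hom_R(\Lambda_1,R)\otimes_R\Hom_R(\Lambda_2,R)$.

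I would obtain this by composing two standard isomorphisms. First, tensor-hom adjunction over the commutative ring $R$ gives a natural isomorphism
\[\Hom_R(\Lambda_1\otimes_R\Lambda_2,R)\ \cong\ \Hom_R\bigl(\Lambda_1,\Hom_R(\Lambda_2,R)\bigr).\]
Second, since $\Lambda_1$ is a finitely generated projective (in fact free) $R$-module, the natural evaluation map
\[\Hom_R(\Lambda_1,R)\otimes_R\Hom_R(\Lambda_2,R)\ \lto\ \Hom_R\bigl(\Lambda_1,\Hom_R(\Lambda_2,R)\bigr),\qquad f\otimes g\mapsto\bigl(x\mapsto f(x)g\bigr),\]
is an isomorphism: it is so for $\Lambda_1=R$, hence for $\Lambda_1=R^{\oplus m}$, and any finitely generated free module has this form. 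Composing the first isomorphism with the inverse of the second identifies $\omega_{\Lambda_1\otimes\Lambda_2}$ with $\omega_{\Lambda_1}\otimes_R\omega_{\Lambda_2}$ as $R$-modules.

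It then remains to check this composite respects the two-sided $\Lambda_1\otimes_R\Lambda_2$-module structures. Recall that for an $R$-order $\Lambda$ the bimodule structure on $\omega_\Lambda=\Hom_R(\Lambda,\omega_R)$ is given by $(\lambda f\mu)(x)=f(\mu x\lambda)$; on $\Lambda_1\otimes_R\Lambda_2$ the left and right actions are componentwise, and on $\omega_{\Lambda_1}\otimes_R\omega_{\Lambda_2}$ one takes the tensor of the two bimodule structures. Both maps above are natural in the arguments $\Lambda_1$ and $\Lambda_2$, so they automatically intertwine the left and right multiplications arising by functoriality from those arguments; the only genuine work is to match these functorial actions with the explicit formula for the bimodule structure on each $\omega$, which is a direct computation on elementary tensors $f\otimes g$ and pure tensors $x_1\otimes x_2$. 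I expect this last verification to be the main (though purely mechanical) obstacle; everything else is formal. Note that the freeness hypothesis is used exactly once, in the projectivity step, and the Gorenstein hypothesis is used only to replace $\omega_R$ by $R$, which keeps the adjunction inside the category of ordinary $\Hom$-modules; alternatively one could carry $\omega_R$ through at the cost of heavier notation.
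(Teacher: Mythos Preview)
Your proposal is correct and follows essentially the same route as the paper: compose Hom--tensor adjunction with the isomorphism $\Hom_R(\Lambda_1,R)\otimes_R\Hom_R(\Lambda_2,R)\cong\Hom_R(\Lambda_1,\Hom_R(\Lambda_2,R))$ coming from freeness of $\Lambda_1$. The paper's version is terser---it simply asserts these are bimodule isomorphisms without the explicit verification you outline---but the argument is the same.
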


\begin{proof} This follows from the composition of $\Lambda_1\otimes_R \Lambda_2$-bimodule isomprhisms \begin{align*}
\Hom_R(\Lambda_1\otimes_R\Lambda_2,R)&\cong \Hom_R(\Lambda_1,\Hom_R(\Lambda_2,R)) \\
& \cong \Hom_R(\Lambda_1,R)\otimes_R\Hom_R(\Lambda_2,R).
\end{align*}

Here, the first isomorphism is the canonical isomorphism of Hom-tensor adjunction. The second isomorphism follows since $\Lambda_1$ is a free $R$-module.
\end{proof}

Since we are now able to find the canonical module of orders which are tensor products of free $R$-modules, we get the following examples for $R$ a regular local ring.

\begin{theorem} \label{ncanonmain:1} Let $(R,\m,k)$ be a regular local ring. Suppose $\Lambda_1, \ \Lambda_2$ are $n_1$-canonical and $n_2$-canonical $R$-orders, respectively. Then $\Lambda_1\otimes_R\Lambda_2$ is an $(n_1+n_2)$-canonical $R$-order.\end{theorem}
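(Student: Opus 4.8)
The plan is to reduce the statement to a projective-dimension computation over the tensor-product order, using Lemma~\ref{tensor} to identify the canonical module and a Künneth-type formula to control projective dimension. First I would check that $\Lambda_1\otimes_R\Lambda_2$ is an $R$-order: since $R$ is regular local, MCM $R$-modules are exactly the free ones, so $\Lambda_1$ and $\Lambda_2$ are free $R$-modules, hence so is $\Lambda_1\otimes_R\Lambda_2$, and a free module is certainly MCM. This also puts us in the setting of Lemma~\ref{tensor} (a regular local ring is Gorenstein), which gives the $(\Lambda_1\otimes_R\Lambda_2)$-bimodule isomorphism $\omega_{\Lambda_1\otimes_R\Lambda_2}\cong\omega_{\Lambda_1}\otimes_R\omega_{\Lambda_2}$. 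In particular, as a left $(\Lambda_1\otimes_R\Lambda_2)^{\op}\cong\Lambda_1^{\op}\otimes_R\Lambda_2^{\op}$-module, $\omega_{\Lambda_1\otimes_R\Lambda_2}\cong\omega_{\Lambda_1}\otimes_R\omega_{\Lambda_2}$.

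The core step is then: if $M_i$ is a $\Lambda_i^{\op}$-module with $\pd_{\Lambda_i^{\op}}M_i=n_i$ for $i=1,2$, and all modules in sight are $R$-free, then $\pd_{\Lambda_1^{\op}\otimes_R\Lambda_2^{\op}}(M_1\otimes_R M_2)=n_1+n_2$. To see this, take minimal projective resolutions $P_\bullet^{(i)}\to M_i$ over $\Lambda_i^{\op}$. Because $R$ is regular and every $\Lambda_i$ is $R$-free, each $P_j^{(i)}$ is $R$-free and the resolutions remain exact after applying $-\otimes_R(-)$; thus $P_\bullet^{(1)}\otimes_R P_\bullet^{(2)}$ is a complex of projective $\Lambda_1^{\op}\otimes_R\Lambda_2^{\op}$-modules (a tensor of a projective $\Lambda_1^{\op}$-module with a projective $\Lambda_2^{\op}$-module is projective over the tensor algebra, by the summand-of-free argument) whose homology, computed by the Künneth spectral sequence over the base, is concentrated in degree $0$ and equals $M_1\otimes_R M_2$ there, because $R$-freeness kills all $\Tor$ terms. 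Hence the total complex is a projective resolution of $M_1\otimes_R M_2$ of length $n_1+n_2$, giving $\pd\leq n_1+n_2$. For the reverse inequality, one notes the resolution is minimal (the differentials have entries in the maximal ideal of $\Lambda_1^{\op}\otimes_R\Lambda_2^{\op}$ since each $P_\bullet^{(i)}$ was minimal), or alternatively uses that $\Ext^{n_1+n_2}$ of the tensor is computed by a Künneth formula and does not vanish since $\Ext^{n_i}_{\Lambda_i^{\op}}(M_i,\Lambda_i)\neq0$.

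Applying this with $M_i=\omega_{\Lambda_i}$ yields $\pd_{(\Lambda_1\otimes_R\Lambda_2)^{\op}}\omega_{\Lambda_1\otimes_R\Lambda_2}=n_1+n_2$, except that the definition of $n_i$-canonical only guarantees $\pd_{\Lambda_i^{\op}}\omega_{\Lambda_i}\leq n_i$; so one gets $\pd_{(\Lambda_1\otimes_R\Lambda_2)^{\op}}\omega_{\Lambda_1\otimes_R\Lambda_2}\leq n_1+n_2$, which is exactly what the conclusion asks for, so strict equality is not needed. I expect the main obstacle to be the bookkeeping in the Künneth argument — verifying carefully that tensor products of projectives over the two factor orders are projective over $\Lambda_1^{\op}\otimes_R\Lambda_2^{\op}$, and that $R$-freeness genuinely makes the bicomplex's totalization exact in positive degrees — rather than anything conceptually deep. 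A cleaner route that sidesteps spectral sequences is to induct on $n_1+n_2$: use that $\Omega_{\Lambda_1^{\op}}M_1\otimes_R\Lambda_2^{\op}$ and $\Lambda_1^{\op}\otimes_R\Omega_{\Lambda_2^{\op}}M_2$ fit into a short exact sequence with $M_1\otimes_R M_2$ after tensoring the two factor resolutions together one step at a time, and apply the additivity already recorded in Lemma~\ref{lemma:1}-style dimension-shifting; I would present whichever of these turns out shorter once the $R$-free hypotheses are pinned down.
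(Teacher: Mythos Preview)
Your proposal is correct and follows essentially the same route as the paper: use regularity of $R$ to get freeness of $\Lambda_1,\Lambda_2$, invoke Lemma~\ref{tensor} to identify $\omega_{\Lambda_1\otimes_R\Lambda_2}\cong\omega_{\Lambda_1}\otimes_R\omega_{\Lambda_2}$ as a $(\Lambda_1^{\op}\otimes_R\Lambda_2^{\op})$-module, and then bound the projective dimension of the tensor product by the sum. The only difference is that the paper simply cites \cite{CE}*{Corollary IX.2.7} for the projective-dimension additivity (stated there as an equality), whereas you sketch the K\"unneth/tensor-of-resolutions argument behind it and correctly observe that only the inequality $\leq n_1+n_2$ is needed.
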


\begin{proof} Since $\Lambda_1$ and $\Lambda_2$ are MCM over $R$, and $R$ is a regular local ring, then in fact they are free. Then, noting that $(\Lambda_1\otimes_R\Lambda_2)^{\op}\cong\Lambda_1^{\op}\otimes_R\Lambda_2^{\op},$ the theorem follows immediately from Lemma \ref{tensor} and \cite{CE}*{Corollary IX.2.7}, namely that \begin{align*}
\pd_{\Lambda_1^{\op} \otimes_R \Lambda_2^{\op}} (\omega_{\Lambda_1 \otimes_R \Lambda_2} )=&\pd_{\Lambda_1^{\op} \otimes_R \Lambda_2^{\op}} (\omega_{\Lambda_1} \otimes_R \omega_{\Lambda_2})\\
=&\pd_{\Lambda_1^{\op}}\omega_{\Lambda_1}+\pd_{\Lambda_2^{\op}}\omega_{\Lambda_2}. \qedhere \end{align*} \end{proof}

With this in hand, we can prove the existence of orders which are $n$-canonical and have infinite global dimension. We first remind the reader of some basics on path algebras.

\subsection{Homological behavior of Path Algebras}\label{pathalghom}

The main theorem of Chapter 3 is homological in nature. As such, we collect some background on the homological behavior of path algebras.

\begin{definition} A \emph{quiver} $Q=(Q_0,Q_1,s,t)$ is a directed graph $Q$ with vertex set $Q_0$ and arrow set $Q_1.$ There are two maps $s,t:Q_1\to Q_0$ where for an arrow $a\in Q_1,$ $s(a)$ is the origin of $a$ and $t(a)$ is the destination of $a$.  A \emph{path} in $Q$ is a sequence of arrows $a_na_{n-1}\dots a_1$ such that $t(a_i)=s(a_{i+1})$ for $1\leq i \leq n-1.$ For each vertex $a\in Q_0$, we have the \emph{trivial path} at $a$, denoted by $e_a$, which is the path which begins at $a$, ends at $a$, and consists of no arrows.

\end{definition}

\begin{definition} Let $R$ be a commutative Noetherian ring and $Q$ a quiver. The \emph{path algebra} $RQ$ of $Q$ over $R$ is the free $R$-module with the basis the set of all paths $a_la_{l-1}...a_1$ of length $l \geq 0$ in $Q$. The product of two basis vectors (i.e., paths) $b_k...b_1$ and $a_l...a_1$ of $RQ$ is defined  by
\[
(b_k...b_1)\cdot(a_l...a_1)=b_k...b_1a_l...a_1
\]
if $t(a_l)=s(b_1)$ and 0 otherwise, i.e., the product of arrows $b\cdot a$ is nonzero if and only if $b$ leaves the vertex where $a$ arrives. Multiplication is extended to linear combinations of basis elements $R$-linearly. 
\end{definition}

The next result is well-known to experts, and it is what makes path algebras a convenient choice for relating global dimension information about orders back to the commutative base rings; we include a proof for convenience.

\begin{prop}\label{pathalg:1} Let $Q$ be an acyclic quiver. Let $R$ be a regular local ring of dimension $d$ and $RQ$ the path algebra of $Q$ over $R$. Then, $\gldim RQ\leq d+1$. If $R$ is not regular, then $\gldim RQ=\infty$. \end{prop}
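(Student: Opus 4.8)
The plan is to establish the two claims about $\gldim RQ$ separately, using the standard ``change of rings'' spectral sequence or, equivalently, an explicit projective resolution argument for path algebras. First I would set up some notation: for an acyclic quiver $Q$ with vertex set $Q_0 = \{1, \dots, m\}$, the path algebra $RQ$ has a complete set of primitive orthogonal idempotents $e_1, \dots, e_m$ (the trivial paths), and the simple $RQ$-modules, when $R = k$ is a field, are the one-dimensional modules $S_i$ at each vertex. The key structural input is the standard exact sequence for path algebras: for each vertex $i$, there is a short exact sequence
\begin{equation*}
0 \to \bigoplus_{a : i \to j} (RQ) e_{t(a)} \to (RQ) e_i \to S_i \otimes_R ? \to 0,
\end{equation*}
but since we are over a general commutative ring $R$ rather than a field, the cleanest formulation is: every $RQ$-module $M$ fits in a short exact sequence of $RQ$-modules
\begin{equation*}
0 \to \bigoplus_{a \in Q_1} (RQ)e_{t(a)} \otimes_R e_{s(a)} M \to \bigoplus_{i \in Q_0} (RQ)e_i \otimes_R e_i M \to M \to 0,
\end{equation*}
the ``standard resolution'' of $M$ over $RQ$. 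Acyclicity guarantees this is exact (the usual argument: the alternating sum is the bar-type complex associated to the quiver, which is a resolution precisely because there are no oriented cycles).

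The main point is that this exhibits $M$ as the cokernel of a map between modules of the form $(RQ) e \otimes_R N$ where $N$ is an $R$-module. So I would prove the lemma: $\pd_{RQ}\big((RQ)e_i \otimes_R N\big) \le \pd_R N$ for any $R$-module $N$. This follows because $(RQ)e_i$ is a projective $RQ$-module that is free (hence flat) as a right $R$-module, so tensoring a projective $R$-resolution of $N$ with $(RQ)e_i \otimes_R -$ yields a projective $RQ$-resolution of $(RQ)e_i \otimes_R N$ of the same length. Combining this with the standard resolution above gives, for any $RQ$-module $M$,
\begin{equation*}
\pd_{RQ} M \le 1 + \max_{i \in Q_0} \pd_R(e_i M) \le 1 + \gldim R.
\end{equation*}
Taking the supremum over all $M$ gives $\gldim RQ \le 1 + \gldim R$. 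When $R$ is regular local of dimension $d$, $\gldim R = d$, so $\gldim RQ \le d + 1$, which is the first claim.

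For the second claim, suppose $R$ is not regular, so $\gldim R = \infty$. I want $\gldim RQ = \infty$. The idea is to go the other direction: for a fixed vertex $i$ and an $R$-module $N$, the $RQ$-module $(RQ)e_i \otimes_R N$ should have projective dimension exactly $\pd_R N$ (not just $\le$), or at least we can detect infinite projective dimension. Concretely, I would use that $e_i \cdot \big((RQ)e_i \otimes_R N\big) \cong N$ as $R$-modules (picking off the trivial-path component, using acyclicity so that $e_i (RQ) e_i = R e_i$), and more generally that the functor $M \mapsto e_i M$ from $RQ$-modules to $R$-modules is exact and sends projectives $(RQ)e_j$ to free $R$-modules $e_i (RQ) e_j$, which is a free $R$-module of finite rank (the number of paths $j \to i$). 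Hence if $P_\bullet \to M$ is a finite projective $RQ$-resolution, applying $e_i(-)$ gives a finite free $R$-resolution of $e_i M$. Choosing an $R$-module $N$ with $\pd_R N = \infty$ (possible since $\gldim R = \infty$; e.g. $N = R/\m$) and taking $M = (RQ)e_i \otimes_R N$ with $i$ a sink (so that $e_i M = N$ — a sink exists since $Q$ is acyclic and finite), we get that $M$ has no finite projective resolution, so $\gldim RQ = \infty$.

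The step I expect to be the main obstacle is making the standard/bar resolution of an $RQ$-module over $RQ$ fully rigorous over a general commutative base ring $R$ rather than over a field, and in particular verifying its exactness uses acyclicity of $Q$ in an essential way — this is the place where the hypothesis ``$Q$ acyclic'' enters, and one must be careful that the differentials are $RQ$-linear and that the augmentation is surjective with the claimed kernel. A secondary technical point is the choice of vertex in the second part: one needs a vertex $i$ for which $e_i(-)$ applied to the test module recovers exactly an $R$-module of infinite projective dimension; using a sink makes $e_i \big((RQ)e_i \otimes_R N\big) \cong N$ transparent, but one should double-check the idempotent bookkeeping $e_i(RQ)e_j = $ (free $R$-module on paths $j \to i$) and that finite rank suffices to conclude. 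Everything else is a routine dimension-shifting argument.
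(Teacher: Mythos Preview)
Your argument is correct and rests on the same core tool as the paper's proof: both use the length-one ``standard'' resolution coming from the tensor-algebra structure $RQ = T_S(V)$ with $S = RQ_0$, $V = RQ_1$. The paper writes it as the split bimodule sequence
\[
0 \to T_S(V)\otimes V \otimes T_S(V) \to T_S(V)\otimes T_S(V) \to T_S(V) \to 0,
\]
while you spell it out in quiver notation. The packaging differs in two places. First, for the bound $\gldim RQ \le d+1$, the paper does not bound $\pd_{RQ} M$ directly for all $M$; instead it replaces $M$ by a $d^{\text{th}}$ syzygy, which over a regular local ring is $R$-projective by Auslander--Buchsbaum, and then tensors the split sequence to get an honest length-one projective resolution. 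Your route is more direct: you keep $M$ arbitrary and use the lemma $\pd_{RQ}\big((RQ)e_i \otimes_R N\big)\le \pd_R N$, obtaining the sharper inequality $\pd_{RQ} M \le 1 + \max_i \pd_R(e_iM)$ without any syzygy shift. Second, for the converse, the paper simply invokes \cite{CE}*{IX.2.7} (if $R$ has a module of infinite projective dimension then so does $RQ \cong R\otimes_R RQ$), whereas your sink-vertex argument with the exact functor $e_i(-)$ is an explicit, self-contained replacement for that citation. One small correction: exactness of the standard resolution does \emph{not} actually require $Q$ to be acyclic---it holds for any quiver, since the bimodule sequence above is split as a sequence of right $RQ$-modules. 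Acyclicity is used elsewhere (finiteness of $RQ$ over $R$, existence of a sink, and ensuring $e_i(RQ)e_j$ has finite $R$-rank), so your proof still goes through, but you should relocate where that hypothesis is invoked.
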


\begin{proof} Let $R$ be a regular local ring with $\gldim R = d<\infty$. We note that the path algebra $RQ$ is exactly the tensor algebra $T_S(V)$ where $S:=RQ_0$ is the subring of $RQ$ consisting only of linear combinations of the trivial paths $e_i$ and $V=RQ_1$ is an $S$-bimodule. Then, we have a canonical short exact sequence of $R$-modules \[0\to T_S(V)\otimes_RV\otimes_RT_S(V)\xrightarrow{f}T_S(V)\otimes_R T_S(V) \xrightarrow{mult.} T_S(V)\to 0.\] Here, $f$ is defined on basis elements by $f(x,v,y)=xv\otimes y - x\otimes vy$. To see that $\gldim T_S(V)\leq \gldim R + 1$, we consider a finitely generated left $T_S(V)$-module $M$ which is a $d^{th}$ syzygy over $T_S(V)$. We can find such a module over $T_S(V)$ since, by Corollary \ref{cor1}, we know $\gldim RQ \geq d$.  Such a module is necessarily projective over $R$ since it is MCM over $R$ by the Depth Lemma. It will suffice to show $\pd_{T_S(V)} M \leq 1$. 

The multiplication map $T_S(V)\otimes_R T_S(V) \to T_S(V)$ is split by the map $g:T_S(V)\to T_S(V)\otimes_RT_S(V)$ given by $g(v)=1\otimes v$, so the above exact sequence is split exact as a sequence of right $T_S(V)$-modules, and it remains exact when tensoring  with $M$ over $T_S(V)$. This yields an exact sequence of $T_S(V)$-modules \[0\to T_S(V)\otimes_RV\otimes_R M\to T_S(V)\otimes_R M \to M\to 0.\] We note that $V$ is projective over $R$ by definition; $M$ is projective over $R$ by assumption; and extension of scalars $T_S(V)\otimes_R -$ preserves projectivity. Hence, the above is a projective resolution of $M$ over $T_S(V)$. Thus $\pd_{T_S(V)} M \leq 1$. This verifies $\gldim RQ \leq d+1$.

For the second assertion we observe that $RQ\cong R\otimes_R RQ$, and both $R$ and $RQ$ are free over $R$. Hence by \cite{CE}*{Corollary IX.2.7}, if $R$ has a module of infinite projective dimension (if $R$ is local and non-regular, then the residue field is such a module), then so has $RQ$. \qedhere

\end{proof}

We record one more Lemma in order to work with path algebras efficiently. 

\begin{lemma} \label{pathtensor} Let $R$ be an algebra over a commutative local ring $T$ . Let $Q$ a quiver, and $I$ a right ideal in $TQ$. Then there is an isomorphism of $R$-algebras \[RQ/I(RQ)\cong (TQ/I) \otimes_T R.\]
\end{lemma}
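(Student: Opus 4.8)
The plan is to reduce the lemma to the single natural identification of $R$-algebras $RQ\cong TQ\otimes_T R$; once that is available, the statement is just right-exactness of $-\otimes_T R$ applied to the surjection $TQ\twoheadrightarrow TQ/I$.

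First I would establish that identification. By definition $RQ$ is free as an $R$-module on the set $\mathcal{P}$ of paths of $Q$, and since $TQ=\bigoplus_{p\in\mathcal{P}}Tp$ with each $Tp$ free of rank one over $T$, distributing the tensor product over this direct sum shows $TQ\otimes_T R$ is free as an $R$-module on $\{p\otimes 1\}_{p\in\mathcal{P}}$. Hence the $R$-linear map $\Psi\colon RQ\to TQ\otimes_T R$ determined by $p\mapsto p\otimes 1$ is an isomorphism of $R$-modules. It is moreover a ring homomorphism, because in both algebras the product of two basis paths is governed by the same combinatorial rule — concatenation when the paths are composable, $0$ otherwise — with the coefficients multiplied; equivalently, one can use the tensor-algebra presentation $RQ=T_{RQ_0}(RQ_1)$ from the proof of Proposition \ref{pathalg:1} together with the base-change isomorphisms $RQ_i\cong TQ_i\otimes_T R$. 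Since $\Psi$ also carries the structure map $R\to RQ$ to the one $R\to TQ\otimes_T R$, it is an isomorphism of $R$-algebras.

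Next I would identify $I(RQ)$ inside $TQ\otimes_T R$ and conclude. Composing $\Psi$ with the canonical ring map $TQ\to TQ\otimes_T R$, $x\mapsto x\otimes 1$, the ideal $I(RQ)$ is carried onto the ideal of $TQ\otimes_T R$ generated by $\{i\otimes 1:i\in I\}$. The image of the canonical map $\iota\colon I\otimes_T R\to TQ\otimes_T R$ is $\{\sum_k i_k\otimes r_k\}=\{\sum_k(i_k\otimes 1)(1\otimes r_k)\}$; since $I$ is an ideal of $TQ$, this subset is already stable under multiplication by $TQ\otimes_T R$, and as it is the smallest such subset containing all $i\otimes 1$ it coincides with $I(RQ)$. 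Now apply $-\otimes_T R$ to $0\to I\to TQ\to TQ/I\to 0$: right-exactness yields an exact sequence $I\otimes_T R\xrightarrow{\ \iota\ }TQ\otimes_T R\to(TQ/I)\otimes_T R\to 0$, so $(TQ/I)\otimes_T R\cong(TQ\otimes_T R)/\im\iota=(TQ\otimes_T R)/I(RQ)$. The surjection appearing here is a ring homomorphism, so this is an $R$-algebra isomorphism; composing with $\Psi$ finishes the proof.

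There is no deep point in any of this — it is all bookkeeping. The one place I would be careful is verifying on basis paths that $\Psi$ respects both concatenation and the vanishing of non-composable products, and the observation that $\im\iota$ is a genuine two-sided ideal of $TQ\otimes_T R$ (rather than merely an $R$-submodule), which is exactly where it matters that $I$ is an ideal of $TQ$; when $I$ is only one-sided the same computation still gives the analogous isomorphism of one-sided $RQ$-modules.
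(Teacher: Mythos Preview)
Your proof is correct and follows essentially the same route as the paper: first establish the $I=0$ case $RQ\cong TQ\otimes_T R$ by matching bases of paths, then deduce the general statement from it. The only cosmetic difference is that the paper packages the second step as the chain $R\otimes_T(TQ/I)\cong (R\otimes_T TQ)\otimes_{TQ}TQ/I\cong RQ\otimes_{TQ}TQ/I\cong RQ/I(RQ)$, whereas you spell out right-exactness of $-\otimes_T R$ and identify $\operatorname{im}\iota$ with $I(RQ)$ directly; your version is a bit more explicit about the one-sided case, but the underlying argument is the same.
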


\begin{proof} We begin with the case $I=0$. We regard $TQ$ and $R$ as subrings of $RQ$. Then, the multiplication gives a map $\Phi:TQ\otimes_T R\to RQ$. This map is clearly a morphism of $R$-algebras. Since both sides are free $R$-modules with basis given by all paths on $Q$, this is necessarily an isomprhism. 

We now move to the case that $I$ is a non-zero ideal. We note \[R\otimes_T (TQ/I) \cong R\otimes_T TQ \otimes_{TQ} TQ/I \cong RQ \otimes_{TQ} TQ/I \cong RQ/I(RQ),\] where the second isomorphism follows from the $I=0$ case.  \end{proof}  

And now we can produce a natural example of $n$-canonical orders with infinite global dimension.

\begin{theorem} \label{gorAus} Let $(R,\m,k)$ be a $d$-dimensional Gorenstein local domain. Suppose $Q$ is an acyclic quiver. Then $\Lambda=RQ$ is a 1-canonical $R$-order. If $R$ is not regular, then $\gldim \Lambda=\infty. $
\end{theorem}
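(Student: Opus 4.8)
The plan is to reduce the $1$-canonical claim to the case of a \emph{regular} base ring, where Proposition~\ref{pathalg:1} together with Corollary~\ref{cor1} does the work essentially for free, and then to transport the conclusion back using Lemma~\ref{ChangeofBase} and flat base change. First dispatch the routine points: $\Lambda=RQ$ is a finitely generated free $R$-module with basis the paths of $Q$, hence an $R$-order, and since $R$ is Gorenstein $\omega_R\cong R$, so $\omega_\Lambda=\Hom_R(\Lambda,R)$. The last sentence of the theorem is then immediate from Proposition~\ref{pathalg:1}: if $R$ is not regular, $\gldim RQ=\infty$. So the only real content is that $\pd_{\Lambda^{\op}}\omega_\Lambda\le 1$.

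The quantity $\pd_{\Lambda^{\op}}\omega_\Lambda$ is unchanged by $\m$-adic completion: $\Lambda$ and $\hat R Q$ are semiperfect, $\hat R$ is faithfully flat over $R$ with $\widehat{RQ}\cong\hat R Q$ and $\widehat{\omega_\Lambda}\cong\Hom_{\hat R}(\hat R Q,\hat R)=\omega_{\hat RQ}$, and a minimal projective resolution over $\Lambda^{\op}$ base-changes to one over $(\hat RQ)^{\op}$; since $\hat R$ is again a Gorenstein local ring we may assume $R$ is complete. By Cohen's structure theorem there is a complete regular local subring $S\subseteq R$ with $R$ module-finite over $S$ and $\dim S=d$; as $R$ is Cohen--Macaulay and module-finite over the regular ring $S$ of equal dimension, $R$ is free over $S$, hence an $S$-order, and $R\cong\omega_R=\Hom_S(R,\omega_S)\cong\Hom_S(R,S)$ as $R$-modules. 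Thus $R$ is a Gorenstein $S$-order with $S$ regular, so the hypotheses of Lemma~\ref{ChangeofBase} hold. Lemma~\ref{pathtensor} (base ring $S$, $S$-algebra $R$, $I=0$) gives $\Lambda=RQ\cong SQ\otimes_S R$, so $\Lambda$ is also an $S$-order, and Lemma~\ref{ChangeofBase} identifies $\omega_\Lambda=\Hom_R(\Lambda,R)$ with $\Hom_S(\Lambda,S)$ as $\Lambda$-bimodules, in particular as $\Lambda^{\op}$-modules.

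It now suffices to bound $\pd_{\Lambda^{\op}}\Hom_S(\Lambda,S)$. Since $R$ is central, $\Lambda^{\op}\cong(SQ)^{\op}\otimes_S R$, and since $R$ is $S$-free of finite rank, $\Hom_S(SQ\otimes_S R,\,S)\cong\Hom_S(SQ,S)\otimes_S\Hom_S(R,S)\cong\omega_{SQ}\otimes_S R$ as $(SQ)^{\op}\otimes_S R$-modules, the verification that this is an isomorphism of modules (not merely of $S$-modules) being parallel to the proof of Lemma~\ref{tensor}. Now $SQ$ is the path algebra of an acyclic quiver over the regular local ring $S$ of dimension $d$, so $\gldim SQ\le d+1$ by Proposition~\ref{pathalg:1} and $\gldim SQ\ge d$ by Corollary~\ref{cor1}; in either case Corollary~\ref{cor1} makes $SQ$ an $(\gldim SQ-d)$-canonical $S$-order with $\gldim SQ-d\in\{0,1\}$, so $\pd_{(SQ)^{\op}}\omega_{SQ}\le 1$. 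As $R$ is flat over $S$, tensoring a length-$\le 1$ projective resolution of $\omega_{SQ}$ over $(SQ)^{\op}$ with $R$ over $S$ yields a length-$\le 1$ projective resolution of $\omega_{SQ}\otimes_S R\cong\Hom_S(\Lambda,S)\cong\omega_\Lambda$ over $\Lambda^{\op}$. Hence $\pd_{\Lambda^{\op}}\omega_\Lambda\le 1$, so $\Lambda=RQ$ is $1$-canonical.

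The genuinely delicate part — the one I expect to be the main obstacle when writing this out in full — is bookkeeping the left/right module structures through the chain of $\Hom$ and $\otimes$ identifications, so that the canonical module of $\Lambda$ over $R$ really is identified with its canonical module over $S$, and then with $\omega_{SQ}\otimes_S R$, as honest $\Lambda^{\op}$-modules; this is exactly what Lemma~\ref{ChangeofBase} and the Lemma~\ref{tensor}-style argument are for. A secondary, more standard point is the reduction to the complete case, which can simply be omitted if one states the theorem for complete $R$ (as is done for the application in Section~\ref{commutative}).
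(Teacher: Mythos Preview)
Your proof is correct and follows essentially the same route as the paper: reduce to the complete case, use Cohen's structure theorem to realize $R$ as an order over a regular local ring $S$, identify $RQ\cong SQ\otimes_S R$ via Lemma~\ref{pathtensor}, observe that $SQ$ is $1$-canonical over $S$ by Proposition~\ref{pathalg:1} and Corollary~\ref{cor1}, and transport the conclusion back through Lemma~\ref{ChangeofBase}. The only cosmetic difference is that where the paper invokes the packaged Theorem~\ref{ncanonmain:1} (which rests on Lemma~\ref{tensor} and the Cartan--Eilenberg tensor-product formula), you unpack that step and argue directly by flat base change of a length-$\le 1$ projective resolution of $\omega_{SQ}$; the paper's reduction to $\widehat R$ is likewise the content of Lemma~\ref{complete}, which you reprove inline.
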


For the proof we will need to reduce to the case where $R$ is complete via the following lemma.

\begin{lemma}\label{complete} Suppose $R$ is a CM local ring with a canonical module $\omega_R$ and that $R\hookrightarrow S$ is a faithfully flat (commutative) ring extension such that $\dim S= \dim R$ and $S$ has a canonical module $\omega_S=(\omega_R) \otimes_R S$ (e.g., if $S=\widehat{R}$). Let $\Lambda$ be an $R$-order. We have that $\Lambda$ is an $n$-canonical $R$-order if and only if $\Lambda \otimes_R S$ is an $n$-canonical $S$-order. \end{lemma}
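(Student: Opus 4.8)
The plan is to strip the statement down to the single assertion that the invariant $\pd_{\Lambda^{\op}}\omega_\Lambda$ is insensitive to the faithfully flat base change $-\otimes_R S$, after first disposing of two bookkeeping points.

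\emph{Preliminaries.} First I would check that $\Lambda\otimes_R S$ really is an $S$-order and that its canonical module is $\omega_\Lambda\otimes_R S$. For the order property: if $x_1,\dots,x_d\in\m$ (with $d=\dim R$) is a maximal $\Lambda$-regular sequence, it remains $(\Lambda\otimes_R S)$-regular because $S$ is $R$-flat, and since $\dim S=\dim R$ this forces $\depth_S(\Lambda\otimes_R S)=\dim S$, i.e. $\Lambda\otimes_R S$ is maximal Cohen--Macaulay over $S$. For the canonical module, using the hypothesis $\omega_S=\omega_R\otimes_R S$, flat base change for $\Hom$ (legitimate since $\Lambda$ is a finitely presented $R$-module, $R$ being Noetherian), and the identification $(\Lambda\otimes_R S)^{\op}\cong\Lambda^{\op}\otimes_R S$ (valid as $R$ and $S$ are commutative), one obtains
\[\omega_{\Lambda\otimes_R S}=\Hom_S(\Lambda\otimes_R S,\ \omega_R\otimes_R S)\cong\Hom_R(\Lambda,\omega_R)\otimes_R S=\omega_\Lambda\otimes_R S\]
as $\Lambda\otimes_R S$-modules (equivalently $\Lambda^{\op}\otimes_R S$-modules). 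Thus the lemma reduces to the equivalence $\pd_{\Lambda^{\op}}\omega_\Lambda\le n\iff\pd_{\Lambda^{\op}\otimes_R S}(\omega_\Lambda\otimes_R S)\le n$.

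\emph{The two implications.} The forward direction only uses flatness: taking a projective resolution $P_\bullet\to\omega_\Lambda$ over $\Lambda^{\op}$ of length $\le n$ and applying the exact functor $-\otimes_R S$ produces a resolution $P_\bullet\otimes_R S\to\omega_\Lambda\otimes_R S$ of length $\le n$ by projective $\Lambda^{\op}\otimes_R S$-modules. For the converse, faithful flatness enters. Since $\Lambda^{\op}$ is module-finite over the Noetherian ring $R$ it is Noetherian, so $\omega_\Lambda$ has a resolution by finitely generated projective $\Lambda^{\op}$-modules; consequently $\pd_{\Lambda^{\op}}\omega_\Lambda\le n$ if and only if $\Ext^{n+1}_{\Lambda^{\op}}(\omega_\Lambda,N)=0$ for every finitely generated $\Lambda^{\op}$-module $N$. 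Computing $\Ext$ from that finitely generated projective resolution and using flatness of $S$ over the central subring $R$ yields the base-change isomorphism $\Ext^{n+1}_{\Lambda^{\op}}(\omega_\Lambda,N)\otimes_R S\cong\Ext^{n+1}_{\Lambda^{\op}\otimes_R S}(\omega_\Lambda\otimes_R S,\ N\otimes_R S)$; the right-hand side vanishes by the assumption $\pd_{\Lambda^{\op}\otimes_R S}(\omega_\Lambda\otimes_R S)\le n$, and faithful flatness then forces $\Ext^{n+1}_{\Lambda^{\op}}(\omega_\Lambda,N)=0$. As $N$ ranges over all finitely generated modules, $\pd_{\Lambda^{\op}}\omega_\Lambda\le n$.

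The main obstacle is this converse direction: one must be sure that $\pd_{\Lambda^{\op}}\omega_\Lambda$ is detected by vanishing of $\Ext^{n+1}_{\Lambda^{\op}}(\omega_\Lambda,-)$ on finitely generated test modules (so that the flat base-change isomorphism for $\Ext$, which requires a finitely generated projective resolution and the centrality of $R$, is available), and to carry this out for $\Ext$ over the noncommutative ring $\Lambda^{\op}$ rather than over $R$. The identification $\omega_{\Lambda\otimes_R S}\cong\omega_\Lambda\otimes_R S$ is routine but is exactly where the hypothesis $\omega_S=\omega_R\otimes_R S$ gets used.
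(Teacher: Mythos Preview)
Your proof is correct and follows the same two-step strategy as the paper: first identify $\omega_{\Lambda\otimes_R S}\cong\omega_\Lambda\otimes_R S$ via flat base change for $\Hom$, then show that $\pd_{\Lambda^{\op}}\omega_\Lambda$ is unchanged by the faithfully flat extension $-\otimes_R S$. The paper argues the second step more tersely (projectives go to projectives, and exactness can be checked after tensoring with $S$), while you spell out the converse direction via $\Ext$-vanishing on finitely generated test modules; these are equivalent formulations of the same descent argument.
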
 

\begin{proof} We must prove two facts. First we note that since $S$ is faithfully flat \[\Hom_R(M,N)\otimes_R S \cong \Hom_S(M\otimes_R S, N\otimes_R S).\] It follows at once that $\omega_{\Lambda \otimes_R S}\cong (\omega_\Lambda) \otimes_R S$. Verifying that this is a $\Lambda \otimes_R S$-isomorphism is straightforward. Next, since exactness of $\Lambda$-module sequences can be checked as $R$-modules, $S$ is faithfully flat over $R$, and $-\otimes_R S$ takes projective $\Lambda$-modules to projective $\Lambda \otimes_R S$-modules, we see that \[\pd_\Lambda \omega_\Lambda =\pd_{\Lambda \otimes_R S} (\omega_{\Lambda \otimes_R S}).\] The lemma follows at once from these two observations.  \qedhere

\end{proof}

\begin{proof}[Proof of Theorem \ref{gorAus}]  We reduce to the case where $R$ is complete. Let $\widehat{R}$ denote the completion of $R$ with respect to the maximal ideal. By Lemma \ref{complete}, we see that $RQ$ is 1-canonical if and only if $RQ\otimes_R \widehat{R}$ is  1-canonical. But, by Lemma \ref{pathtensor}, we know that $RQ\otimes_R \widehat{R} \cong \widehat{R}Q$. Thus we see $RQ$ is 1-canonical if and only if $\widehat{R}Q$ is 1-canonical. Thus we may assume $R$ is complete.

Now, by Cohen's Structure Theorem for complete local rings,  \cite{Mats}*{Theorem 8.24}, $R$ is an order over some $d$-dimensional regular local ring $S$. Since $R$ is a Gorenstein local ring and an order over $S$, we have $R\cong\omega_R\cong\Hom_S(R,S)$ and $\pd_R\omega_R=0$ since $R$ is MCM over $S$ and hence free; i.e., $R$ is a 0-canonical $S$-order. Now, by Proposition \ref{pathalg:1}, we know that $\gldim SQ\leq d+1$, and hence by Corollary \ref{cor1} $SQ$ is a 1-canonical $S$-order. Now, by Proposition \ref{pathtensor} and Theorem \ref{ncanonmain:1}, $\Lambda:=RQ\cong R\otimes_S SQ$ is a $1$-canonical $S$-order.  We observe that by Lemma \ref{ChangeofBase}, the canonical module of $\Lambda$ as an $S$-order is the same as the canonical module of $\Lambda$ as an $R$-order. Hence, $\Lambda$ is a 1-canonical $R$-order. Lastly, by Lemma \ref{pathalg:1}  we know that if $R$ is not regular, we have $\gldim \Lambda=\infty$. \qedhere

 \end{proof}

\section{Higher Isolated Singularities}\label{n-canon}

The main theorem of this paper is that if an order $\Lambda$ is $n$-canonical and has only finitely many nonisomorphic indecomposable modules in $\Omega^n\CM\Lambda$, then $\Lambda$ has finite global dimension on the punctured spectrum of $R$. In this section we show that over orders with this property, high syzygies behave much like MCM modules over isolated singularities.

\begin{defn} Let $\Lambda$ be an order over a CM ring $R$. We call $\Lambda$ an \emph{$n$-isolated singularity} if \[\gldim \Lambda_\p \leq n+\dim R_\p\] for all non-maximal prime ideals $\p$. We say $\Lambda$ is \emph{$n$-nonsingular} if $\gldim \Lambda_\p\leq n+\dim R_\p$ for all $\p \in \Spec R$.  \end{defn}

\begin{remark} It follows from the definition that if $\Lambda$ is an $n$-isolated singularity, it is also an $m$-isolated singularity for any $m\geq n$. It might be interesting to study ``strict'' $n$-isolated singularities where $\gldim \Lambda_\p = n+\dim R_\p$ for all $\p \in \Spec R$, as well as non-strict ones.
\end{remark}

When $\Lambda$ is a isolated singularity (the $n=0$ case) it is known that all modules in $\CM\Lambda$ are $d^{th}$ syzygies; in fact, the stronger statement that all modules in $\CM \Lambda$ are $d$-torsionfree $\Lambda$-modules is proved in \cite{Auslander4}*{Theorem 7.9}. This fact allows one to bound the projective dimension of modules in $\CM\Lambda$ on the punctured spectrum. Over $n$-isolated singularities, we accomplish this task with the following result.

\begin{lemma} Let $\Lambda$ be an $n$-isolated singularity over a CM local ring $R$. Then if $M\in \text{CM}(\Lambda)$ we have \[\pd M_\p \leq n\] for all non-maximal primes $\p$. \label{pdM}
\end{lemma}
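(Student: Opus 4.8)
The plan is to reduce the statement to the localized order $\Lambda_\p$ and then combine Corollary \ref{cor1} with the generalized Auslander--Buchsbaum formula of Theorem \ref{projdimcanon}. First I would fix a non-maximal prime $\p\in\Spec R$. If $M_\p=0$ there is nothing to prove, so I may assume $\p\in\Supp_R M$; since $M$ is a $\Lambda$-module this forces $\p\in\Supp_R\Lambda$. I would then record the routine base-change facts: $R_\p$ is a CM local ring with canonical module $(\omega_R)_\p$; the algebra $\Lambda_\p$ is an $R_\p$-order, because $\Lambda$ being MCM over $R$ makes $\Lambda_\p$ MCM over $R_\p$; and, for the same reason, $M_\p\in\CM(\Lambda_\p)$ with $\depth_{R_\p}M_\p=\dim R_\p$.

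Next, since $\Lambda$ is an $n$-isolated singularity we have $\gldim\Lambda_\p\leq n+\dim R_\p<\infty$. Applying Corollary \ref{cor1} to the order $\Lambda_\p$ over the $(\dim R_\p)$-dimensional CM local ring $R_\p$, I conclude that $\Lambda_\p$ is an $m$-canonical order with $m:=\gldim\Lambda_\p-\dim R_\p\leq n$. Because $\pd_{\Lambda_\p}M_\p\leq\gldim\Lambda_\p<\infty$, Theorem \ref{projdimcanon} applies to $X=M_\p$ and gives
\[\dim R_\p\ \leq\ \pd_{\Lambda_\p}M_\p+\depth_{R_\p}M_\p\ \leq\ \dim R_\p+m.\]
Using $\depth_{R_\p}M_\p=\dim R_\p$, this forces $\pd_{\Lambda_\p}M_\p\leq m\leq n$, which is exactly the asserted bound.

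The argument is short, and the only points needing care — which I regard as the main (and mild) obstacle — are the bookkeeping facts that localization preserves the MCM property of both $\Lambda$ and $M$, that $R_\p$ inherits a canonical module from $R$, and the degenerate case $M_\p=0$; each of these is elementary but all are needed before $\Lambda_\p$ and $M_\p$ can legitimately be fed into Corollary \ref{cor1} and Theorem \ref{projdimcanon}.
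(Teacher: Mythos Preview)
Your proof is correct, but it takes a different route from the paper's. The paper argues directly from Lemma \ref{lemma:1}: pick a maximal $M_\p$-regular sequence $x_1,\dots,x_t\in\p R_\p$ (so $t=\dim R_\p$), apply Lemma \ref{lemma:1} to obtain
\[
\pd_{\Lambda_\p}\bigl(M_\p/(x_1,\dots,x_t)M_\p\bigr)=\pd_{\Lambda_\p}M_\p+\dim R_\p,
\]
and bound the left side by $\gldim\Lambda_\p\leq n+\dim R_\p$. Your argument instead passes through the structural results: Corollary \ref{cor1} (finite global dimension forces $\Lambda_\p$ to be $m$-canonical with $m\leq n$) followed by Theorem \ref{projdimcanon} (the generalized Auslander--Buchsbaum inequality). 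Note that Corollary \ref{cor1} is itself proved via Lemma \ref{lemma:1}, so in effect you are invoking the same regular-sequence trick indirectly and then adding the Auslander--Buchsbaum step on top. The paper's approach is shorter and more self-contained; yours has the merit of exhibiting the conceptual chain $\gldim<\infty\Rightarrow n$-canonical $\Rightarrow$ depth--projective-dimension bound, which is a pleasant way to see why the lemma holds.
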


\begin{proof} Let $M\in \CM \Lambda$ and $\p\in \Spec R$. It follows that $M_\p\in \CM\Lambda_\p$. Pick a maximal $M_\p$-regular sequence $x_1, \dots, x_t \in \p R_\p$.  It follows from Lemma \ref{lemma:1} that  \[ \pd_{\Lambda_\p} M_\p/(x_1,\dots,x_t) M_\p=\pd_{\Lambda_\p} M_\p +\dim R_\p.\] Since $\gldim \Lambda_\p \leq n+\dim R_\p$, it must be that \[\pd_{\Lambda_\p} M_\p\leq n.\qedhere\]  

\end{proof}

From this we get the following useful characterization of $n$-isolated singularities. Recall that $\mathcal{X}_n=\add\{\Omega^n\CM\Lambda\}$.

\begin{lemma}\label{syzfreepunc} Let $\Lambda$ be an order over a CM local ring $R$. Then $\Lambda$ is an $n$-isolated singularity if and only if $X_\p$ is a projective $\Lambda_\p$-module for all $X\in\mathcal{X}_n$ and non-maximal primes $\p\in \Spec R$.  \end{lemma}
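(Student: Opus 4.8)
The plan is to prove both implications using the machinery already in place, principally Lemma \ref{pdM} and Theorem \ref{projdimcanon}-style dimension shifting, together with the observation that localization is exact and commutes with taking syzygies.

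For the forward direction, suppose $\Lambda$ is an $n$-isolated singularity and let $X \in \mathcal{X}_n$ and $\p$ a non-maximal prime. Since $\add$ and localization commute, it suffices to treat $X = \Omega^n Y$ for some $Y \in \CM\Lambda$, coming from an exact sequence $0 \to X \to P_{n-1} \to \cdots \to P_0 \to Y \to 0$ with $P_i$ projective. Localizing at $\p$ keeps this exact, so $X_\p = \Omega^n Y_\p$ over $\Lambda_\p$. By Lemma \ref{pdM}, $\pd_{\Lambda_\p} Y_\p \le n$, hence its $n$-th syzygy $X_\p$ is projective over $\Lambda_\p$ (a standard consequence: if a module has projective dimension $\le n$, any $n$-th syzygy is projective).

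For the converse, assume $X_\p$ is projective over $\Lambda_\p$ for every $X \in \mathcal{X}_n$ and every non-maximal $\p$. I must show $\gldim \Lambda_\p \le n + \dim R_\p$. It suffices to bound $\pd_{\Lambda_\p} M$ for an arbitrary finitely generated $\Lambda_\p$-module $M$ (since $\Lambda_\p$ is Noetherian of finite global dimension questions reduce to finitely generated modules, or one can reduce to $M = \Lambda_\p/J$). Write $d_\p = \dim R_\p$. By the Depth Lemma over $R_\p$ together with the fact that $\Lambda_\p$ is MCM over $R_\p$, the $d_\p$-th syzygy $\Omega^{d_\p} M$ lies in $\CM \Lambda_\p$. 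Now the key point is that any module in $\CM\Lambda_\p$ is the localization of a module in $\CM\Lambda$ — this is where I need to be slightly careful: a priori $\Omega^{d_\p}M$ need only be defined over $\Lambda_\p$. I would instead argue directly that it suffices to prove $\pd_{\Lambda_\p} N \le n$ for all $N \in \CM\Lambda_\p$, and then, given such $N$, take an $n$-th syzygy $\Omega^n N$ over $\Lambda_\p$; I claim $\Omega^n N$ is a localization of something in $\mathcal{X}_n$. The cleanest route: every $N \in \CM\Lambda_\p$ is a direct summand of $M_\p$ for some $M \in \CM\Lambda$ (standard for module-finite algebras, since $\Lambda_\p$-modules that are MCM over $R_\p$ arise from $\Lambda$-modules by clearing denominators and one can arrange MCM-ness), so $\Omega^n N$ is a summand of $(\Omega^n M)_\p$, which is the localization of an element of $\mathcal{X}_n$ and hence projective by hypothesis; therefore $\pd_{\Lambda_\p} N \le n$. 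Combining, $\pd_{\Lambda_\p} M \le d_\p + n$, so $\gldim \Lambda_\p \le n + d_\p$.

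The main obstacle is the lifting step in the converse: justifying that an arbitrary $N \in \CM\Lambda_\p$ is (a summand of) the localization of a module in $\CM\Lambda$, so that the hypothesis on $\mathcal{X}_n$ can be applied. If that lifting is awkward, the alternative is to reprove Lemma \ref{pdM}'s localization argument in reverse — start from an arbitrary $\Lambda_\p$-module, pass to its $d_\p$-th syzygy to land in $\CM\Lambda_\p$, kill a maximal regular sequence (Lemma \ref{lemma:1}) to relate $\pd$ over $\Lambda_\p$ to a module whose $n$-th syzygy projectivity is controlled — but ultimately one still needs the $\mathcal{X}_n$ hypothesis to bite on $\CM\Lambda_\p$-modules, so the descent/lifting of MCM modules along $\Lambda \to \Lambda_\p$ is the real content. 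Everything else — exactness of localization, commutation with syzygies, the Depth Lemma, and the elementary "projective dimension $\le n$ iff every $n$-th syzygy is projective" fact — is routine.
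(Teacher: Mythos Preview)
Your forward direction is fine and matches the paper.

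For the converse, the gap you flag is real: there is no standard fact guaranteeing that an arbitrary $N \in \CM\Lambda_\p$ is a summand of $M_\p$ for some $M \in \CM\Lambda$. Clearing denominators on a presentation of $N$ yields some $M' \in \mod\Lambda$ with $M'_\p \cong N$, but $M'$ need not be MCM over $R$, and there is no evident way to repair this without altering $M'_\p$. Your alternative sketch via Lemma~\ref{lemma:1} does not escape the problem either, since it still needs the hypothesis on $\mathcal{X}_n$ to apply to modules that originate only over $\Lambda_\p$.

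The paper sidesteps the issue by reversing the order of operations. Instead of taking syzygies over $\Lambda_\p$ and then trying to lift, it lifts first and takes syzygies globally. Given $X \in \mod\Lambda_\p$, write $X = M_\p$ for some $M \in \mod\Lambda$ (this lifting \emph{is} routine). With $d = \dim R$ (not $d_\p$), one has $\Omega^{n+d}_\Lambda M \in \mathcal{X}_n$, so its localization is projective by hypothesis, giving $\pd_{\Lambda_\p} X \le n+d < \infty$. This only yields $\gldim\Lambda_\p < \infty$, not the required bound $n + d_\p$. To sharpen it, the paper exploits the one global MCM module that is always available: $\omega_\Lambda$. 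Since $\Omega^n\omega_\Lambda \in \mathcal{X}_n$, its localization is projective, so $\pd_{\Lambda_\p}\omega_{\Lambda_\p} \le n$; Proposition~\ref{prop2} (using $\gldim\Lambda_\p < \infty$) then makes $\Lambda_\p$ itself $n$-canonical, and Theorem~\ref{projdimcanon} delivers $\gldim\Lambda_\p \le n + d_\p$.

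So the missing idea is: do not attempt to lift MCM modules; lift arbitrary modules, over-syzygy globally to land in $\mathcal{X}_n$, accept the weaker conclusion $\gldim\Lambda_\p < \infty$, and then recover the sharp bound through the canonical module and Theorem~\ref{projdimcanon}.
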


\begin{proof} ($\Rightarrow$): This follows at once from the previous lemma.

($\Leftarrow$): Fix a non-maximal prime $\p \in \Spec R$. We must show $\gldim \Lambda_\p \leq n+\dim R_\p$. Let $X\in \mod\Lambda_\p$. We know, then, that $X=M_\p$ for some $M\in \mod \Lambda$. We note that $\Omega^{n+d}_\Lambda M\in \mathcal{X}_n$, so $\Omega^{n+d}_{\Lambda_\p}X=(\Omega^{n+d}_{\Lambda}M)_\p$ is a projective $\Lambda_\p$-module. Hence, $\pd_{\Lambda_\p}X< \infty$. Thus, we have established that $\gldim \Lambda_\p <\infty$. Since $\omega_\Lambda \in \CM \Lambda,$ it is clear that $\Omega_\Lambda^n(\omega_\Lambda)\in\mathcal{X}_n$. Hence $\Omega_{\Lambda_\p}^n(\omega_{\Lambda_\p})=(\Omega_\Lambda^n(\omega_\Lambda))_\p$ is projective by assumption. Thus, $\pd_{\Lambda_\p}\omega_{\Lambda_\p}\leq n$. Applying Corollary \ref{cor1} to $\Lambda_\p^{op}$, we obtain $\gldim \Lambda_\p = \gldim \Lambda_\p^{op}\leq n+\dim R_\p.$
\end{proof}

The following lemma will be useful later, as it detects $n$-isolated singularities.

\begin{lemma} \label{ext1} Let $R$ be a CM local ring with canonical module $\omega$. Let $\Lambda$ be an $R$-order. Then $\Lambda$ is an $n$-isolated singularity if and only if $\ell_R(\Ext_\Lambda^1(N,M))<\infty$ for all $M,N \in \mathcal{X}_n$. \end{lemma}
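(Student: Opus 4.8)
The plan is to characterize the $n$-isolated singularity property via Lemma \ref{syzfreepunc}, namely that $\Lambda$ is an $n$-isolated singularity iff $X_\p$ is projective over $\Lambda_\p$ for every $X \in \mathcal{X}_n$ and every non-maximal prime $\p$, and then translate the projectivity condition into the finite-length condition on $\Ext^1$. The key link is the standard fact that for a finitely generated module $E = \Ext_\Lambda^1(N,M)$ over an order (hence a finitely generated $R$-module), $\ell_R(E) < \infty$ if and only if $E_\p = 0$ for all non-maximal primes $\p \in \Spec R$, and that localization commutes with $\Ext$ for finitely presented modules: $\Ext_\Lambda^1(N,M)_\p \cong \Ext_{\Lambda_\p}^1(N_\p, M_\p)$.

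First I would prove the forward direction. Assume $\Lambda$ is an $n$-isolated singularity and fix $M, N \in \mathcal{X}_n$ and a non-maximal prime $\p$. By Lemma \ref{syzfreepunc}, $N_\p$ is a projective $\Lambda_\p$-module, so $\Ext_{\Lambda_\p}^1(N_\p, M_\p) = 0$. Hence $\Ext_\Lambda^1(N,M)_\p \cong \Ext_{\Lambda_\p}^1(N_\p,M_\p) = 0$ for every non-maximal $\p$; since $\Ext_\Lambda^1(N,M)$ is a finitely generated $R$-module supported only at $\m$, it has finite length.

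For the converse, suppose $\ell_R(\Ext_\Lambda^1(N,M)) < \infty$ for all $M,N \in \mathcal{X}_n$. Fix a non-maximal prime $\p$; by Lemma \ref{syzfreepunc} it suffices to show $X_\p$ is projective over $\Lambda_\p$ for every $X \in \mathcal{X}_n$. Take any $X \in \mathcal{X}_n$. The localized module $X_\p$ lies in $\mathcal{X}_n$ over $\Lambda_\p$ (it is a summand of a localized $n$th syzygy of a module in $\CM\Lambda_\p$), so by Lemma \ref{syzext} applied to $\Lambda_\p$—which is $n$-canonical, being a localization of an $n$-canonical order, with localized canonical module—we get $\Ext_{\Lambda_\p}^i(X_\p,\Lambda_\p) = 0$ for all $i > 0$. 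Thus to conclude $X_\p$ is projective it is enough to show $\Ext_{\Lambda_\p}^1(X_\p, Y) = 0$ for all $Y \in \mod\Lambda_\p$, and a dimension-shifting / syzygy argument reduces this to the case $Y \in \mathcal{X}_n$ over $\Lambda_\p$: writing $X_\p = Z_\p$ with $Z \in \mathcal{X}_n$, one has $\Ext_{\Lambda_\p}^1(Z_\p, Y_\p) \cong \Ext_\Lambda^1(Z,Y)_\p$, and $\Ext_\Lambda^1(Z,Y)$ has finite length by hypothesis, so this localization vanishes at the non-maximal prime $\p$. Combining vanishing of $\Ext^1$ against all of $\mathcal{X}_n$ with the vanishing of higher $\Ext$ into $\Lambda_\p$ from Lemma \ref{syzext}, and noting $\Lambda_\p$ itself lies in $\mathcal{X}_n$, forces $X_\p$ to be a projective $\Lambda_\p$-module.

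The main obstacle I anticipate is the reduction step in the converse: showing that the finite-length hypothesis on $\Ext^1$ between modules of $\mathcal{X}_n$ suffices to kill $\Ext^1(X_\p, -)$ against *all* $\Lambda_\p$-modules, not merely those coming from $\mathcal{X}_n$. This requires knowing that $\mathcal{X}_n$ is closed under syzygies (clear, since a syzygy of an $n$th syzygy of an MCM module is again one after shifting, using that $\CM\Lambda$ is closed under syzygies over an order) and using dimension shifting together with Lemma \ref{syzext} to handle the higher $\Ext$ terms that appear. One must also be careful that localization of an $n$-canonical order at a prime is again $n$-canonical with the expected canonical module—this follows since $\omega_{\Lambda_\p} \cong (\omega_\Lambda)_\p$ and localization is exact, preserving the bound $\pd_{\Lambda_\p^{\op}} \omega_{\Lambda_\p} \le n$.
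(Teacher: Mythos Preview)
Your forward direction is fine and matches the paper. The converse, however, has a genuine gap: you invoke Lemma \ref{syzext} for $\Lambda_\p$, claiming it is $n$-canonical ``being a localization of an $n$-canonical order''---but the hypotheses of Lemma \ref{ext1} do \emph{not} assume $\Lambda$ is $n$-canonical; it is stated for an arbitrary $R$-order. So your use of Lemma \ref{syzext} is unjustified, and the ``dimension-shifting / syzygy argument'' you build on top of it (which you yourself flag as the main obstacle) never gets off the ground.

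The fix is far simpler than the machinery you reach for, and it is essentially what the paper does. The key observation is that $\mathcal{X}_n$ is closed under syzygies: if $X \in \mathcal{X}_n$ then $\Omega X \in \mathcal{X}_n$ as well. The hypothesis therefore gives $\ell_R\bigl(\Ext^1_\Lambda(X,\Omega X)\bigr) < \infty$, so for any non-maximal prime $\p$ we have $\Ext^1_{\Lambda_\p}(X_\p,(\Omega X)_\p) = 0$, and the syzygy sequence
\[
0 \to (\Omega X)_\p \to P_\p \to X_\p \to 0
\]
splits, making $X_\p$ projective. No appeal to Lemma \ref{syzext} or to $n$-canonicity of $\Lambda$ is needed; you can then conclude via Lemma \ref{syzfreepunc} exactly as you intended. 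The paper runs this same syzygy-splitting argument but phrases it without citing Lemma \ref{syzfreepunc}: it applies the trick at the $(n{+}d)$th syzygy of an arbitrary module to get $\gldim\Lambda_\p < \infty$, and again at the $n$th syzygy of $\omega_\Lambda$ to \emph{deduce} that $\Lambda_\p^{\op}$ is $n$-canonical, then finishes with Proposition \ref{prop2} and Theorem \ref{projdimcanon}.
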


\begin{proof}  ($\Rightarrow$): This follows at once from Lemma \ref{pdM}.  

($\Leftarrow$): Suppose $\ell(\Ext_\Lambda^1(N,M))< \infty$ for all $M,N\in \mathcal{X}_n$. Let $\p$ be a prime ideal of $R$ which is not maximal. Consider a module $X\in \mod \Lambda_\p$.  We know $X=M_\p$ for some $M\in\mod\Lambda$. Consider the exact sequence over $\Lambda_\p,$ \begin{equation} 0\to \Omega_{\Lambda_\p}^{n+d+1} (X) \to F \to \Omega_{\Lambda_\p}^{n+d}(X) \to 0\label{exact1},\end{equation} where $F$ is a free $\Lambda_\p$-module. This is the same as the exact sequence \begin{equation} 0\to \Omega_{\Lambda_\p}^{n+d+1} (M_\p) \to F \to \Omega_{\Lambda_\p}^{n+d}(M_\p) \to 0\label{exact2}.\end{equation}  Since $\Omega_{\Lambda_\p}^i(X)=(\Omega_{\Lambda}^i M)_\p$ for all $i\geq 0$ and $\Omega^{n+d}M,\Omega^{n+d+1}M 
\in \mathcal{X}_n$, it follows that \[\Ext_{\Lambda_\p}^1(\Omega_{\Lambda_\p}^{n+d}X,\Omega_{\Lambda_\p}^{n+d+1} X)\cong\left(\Ext_{\Lambda}^1(\Omega^{n+d}M,\Omega^{n+d+1}M)\right)_\p=0.\] Where the final equality follows since $\Ext^1_\Lambda(\Omega^{n+d}M,\Omega^{n+d+1}M)$ has finite length by assumption. This means sequence (\ref{exact1}) splits, and hence $\Omega_{\Lambda_\p}^{n+d}X$ is $\Lambda_\p$-projective, and, therefore, $\pd_{\Lambda_\p}X<\infty$ and so $\gldim \Lambda_\p<\infty$.  We can apply a similar argument as above to $X=\omega_{\Lambda_\p}=(\omega_\Lambda)_\p$ to see that $\pd_{\Lambda_p}\omega_{\Lambda_p}\leq n$. In this case, we only need to consider $\Omega^nX$ since we already have $\omega_\Lambda\in\CM\Lambda$. Thus we conclude $\Lambda_p^{op}$ is $n$-canonical. An identical argument to the end of the proof of Lemma \ref{syzfreepunc} gives that $\gldim \Lambda_\p\leq n+\dim R_\p$.  \qedhere

\end{proof}

The next proposition illustrates that $n^{th}$syzygies (of MCM  modules) over an $n$-isolated singularity behave like MCM modules over an isolated singularity. This is shown for the $n=0$ case in \cite{IYAMA_HIRT}*{Theorem 1.3.1}, the included proof is largely the same.

\begin{prop} Let $\Lambda$ be an $n$-isolated singularity over a $d$-dimensional CM local ring $R$. For $X\in \mathcal{X}_n$:

\begin{enumerate}
\item $\Ext^i_{\Lambda^{op}}(\Tr X, \Lambda)=0$ for $i=1,\dots,d$.
\item $\Ext_\Lambda^i(X,Y)$, $\Tor_i^\Lambda(Z,X)$, and $\underline{\Hom}_\Lambda(X,Y)$ are all finite length for any $Y\in \mod\Lambda$ and $Z\in \mod \Lambda^{\op}$.  
\end{enumerate}
\label{whysyzygies}
\end{prop}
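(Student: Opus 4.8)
The plan is to reduce everything to localizations at non-maximal primes, where Lemma~\ref{pdM} gives uniform bounds on projective dimension, and then run the standard ``isolated singularity'' arguments (as in \cite{IYAMA_HIRT}*{Theorem 1.3.1}) prime by prime. Throughout, fix $X\in\mathcal{X}_n$; by Lemma~\ref{syzext} we have $\Ext^i_\Lambda(X,\Lambda)=0$ for $i>0$, so $X$ is ``torsionless enough'' to make the transpose machinery behave well.

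For part (1), the key point is that for a non-maximal prime $\p$, the localization $\Lambda_\p$ is still an order over $R_\p$, and $X_\p\in\mathcal{X}_n$ localized, so by Lemma~\ref{pdM} we have $\pd_{\Lambda_\p}X_\p\le n<\infty$; in fact since $\Lambda$ is an $n$-isolated singularity, $\gldim\Lambda_\p\le n+\dim R_\p<\infty$. First I would note that $\Tr X$ is defined from a minimal projective presentation $P_1\to P_0\to X\to 0$, and localizing at $\p$ gives a projective presentation of $X_\p$, so $(\Tr X)_\p\cong\Tr(X_\p)$ up to projective summands. Then $\Ext^i_{\Lambda^{op}}(\Tr X,\Lambda)_\p\cong\Ext^i_{\Lambda_\p^{op}}((\Tr X)_\p,\Lambda_\p)$. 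Since $X_\p$ has finite projective dimension $\le n$ over $\Lambda_\p$, the transpose $\Tr(X_\p)$ has projective dimension bounded in a way that forces $\Ext^i_{\Lambda_\p^{op}}(\Tr(X_\p),\Lambda_\p)=0$ for $i$ in the relevant range — this is where I would invoke the depth/grade computation: for $X\in\mathcal{X}_n$, $X$ is an $n$-th syzygy of an MCM module, hence $\depth_R X=d$, so $\grade$ considerations give $\Ext^i_{\Lambda^{op}}(\Tr X,\Lambda)$ supported only at $\m$ for $i=1,\dots,d$; equivalently these Ext modules localize to zero at every non-maximal $\p$. Since an $R$-module all of whose localizations at non-maximal primes vanish is supported only at $\m$, hence has finite length — but we want it to be \emph{zero}, so actually the cleaner route is: $\Ext^i_{\Lambda^{op}}(\Tr X,\Lambda)$ for $i=1,\dots,d$ is computed by a standard spectral sequence / the exact sequence relating $\Tr$ and double duals to be $\Ext$-modules that vanish because $X$ is MCM-like; I would follow \cite{IYAMA_HIRT}*{Theorem 1.3.1} verbatim here, using that $\depth_R X=d$ forces these to vanish outside dimension $0$, and the global (non-localized) vanishing for $i\le d$ comes from $X$ being a high syzygy together with Lemma~\ref{syzext}.

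For part (2), the strategy is: a finitely generated $R$-module $N$ has finite length iff $N_\p=0$ for all non-maximal $\p$. So it suffices to show $\Ext^i_\Lambda(X,Y)_\p=0$, $\Tor^\Lambda_i(Z,X)_\p=0$, and $\uHom_\Lambda(X,Y)_\p=0$ for every non-maximal $\p$ and all $i\ge 1$ (for the first two) resp.\ for $\uHom$. Localizing, $\Ext^i_\Lambda(X,Y)_\p\cong\Ext^i_{\Lambda_\p}(X_\p,Y_\p)$ and similarly for $\Tor$; since $\pd_{\Lambda_\p}X_\p\le n$ these vanish for $i>n$ automatically, but we need them to vanish for \emph{all} $i\ge 1$. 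The point is that $X\in\mathcal{X}_n$ means $X=\Omega^n(C)$ for $C\in\CM\Lambda$ (up to summands), and over $\Lambda_\p$ with $\p$ non-maximal, $X_\p$ is actually projective by Lemma~\ref{syzfreepunc} — that is exactly the characterization proved there. Hence $\Ext^i_{\Lambda_\p}(X_\p,-)=0$ and $\Tor^{\Lambda_\p}_i(-,X_\p)=0$ for all $i\ge 1$, and $\uHom_{\Lambda_\p}(X_\p,Y_\p)=0$ since $X_\p$ is projective (stable Hom kills projectives). Therefore all three modules are supported only at $\m$ and, being finitely generated over the Noetherian local ring $R$, have finite length.

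The main obstacle is part (1): making the identification $(\Tr X)_\p\cong\Tr(X_\p)$ precise (localization of a minimal presentation need not be minimal, so one only gets equality in the stable category, which is enough for $\Ext^i(-,\Lambda)$ with $i\ge1$), and then getting the \emph{global} vanishing $\Ext^i_{\Lambda^{op}}(\Tr X,\Lambda)=0$ for $i=1,\dots,d$ rather than just finite length — this requires the honest depth argument, namely that $\Tr X$ sits in an exact sequence whose relevant cohomology is $\Ext^{\ge 1}$ of an MCM module into $\Lambda$, combined with $\grade$-sensitivity of these Ext groups forcing them to be zero below degree $d+1$; here I would lean on \cite{Auslander4} and the parallel argument in \cite{IYAMA_HIRT}*{Theorem 1.3.1}. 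Parts (2) are then essentially formal given Lemma~\ref{syzfreepunc} and the length criterion.
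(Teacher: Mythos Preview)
Your argument for part~(2) is correct and matches the paper exactly: $X_\p$ is projective over $\Lambda_\p$ for every non-maximal prime $\p$ by Lemma~\ref{syzfreepunc}, so $\Ext$, $\Tor$, and $\uHom$ all localize to zero and hence have finite length.

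For part~(1) there are two problems. First, your appeal to Lemma~\ref{syzext} is unjustified: that lemma assumes $\Lambda$ is $n$-canonical, whereas the hypothesis here is only that $\Lambda$ is an $n$-isolated singularity, and these are genuinely different conditions (for instance, a non-Gorenstein commutative CM isolated singularity is $0$-isolated but not $n$-canonical for any $n$). The paper's proof of (1) does not invoke $n$-canonicity at all, so the claim $\Ext^i_\Lambda(X,\Lambda)=0$ is neither available nor needed.

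Second, and more seriously, you correctly diagnose that localization only yields $\ell_R\bigl(\Ext^i_{\Lambda^{\op}}(\Tr X,\Lambda)\bigr)<\infty$ and that a depth argument is required to upgrade this to actual vanishing, but you never carry that argument out; phrases like ``grade-sensitivity'' and ``$\Ext^{\ge1}$ of an MCM module into $\Lambda$'' do not describe the mechanism and the latter is not what $\Ext^i_{\Lambda^{\op}}(\Tr X,\Lambda)$ is. The paper's argument is concrete. For $d\ge 1$, the Auslander--Bridger sequence
\[
0 \to \Ext^1_{\Lambda^{\op}}(\Tr X,\Lambda) \to X \to X^{**} \to \Ext^2_{\Lambda^{\op}}(\Tr X,\Lambda)\to 0
\]
embeds the finite-length module $\Ext^1$ into $X\in\CM\Lambda$, which has positive depth, forcing $\Ext^1=0$. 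Inductively, once $\Ext^1=\cdots=\Ext^{k-1}=0$ (so in particular $X\cong X^{**}$), dualizing a projective resolution of $\Tr X$ yields an exact sequence
\[
0 \to X \to P_2^* \to \cdots \to P_{k-1}^* \to (\Omega^k\Tr X)^* \to \Ext^k_{\Lambda^{\op}}(\Tr X,\Lambda) \to 0
\]
with each $P_j^*\in\CM\Lambda$; if $\Ext^k$ were nonzero it would have depth $0$, and the Depth Lemma run backwards along this sequence would force $\depth_R X\le d-1$, contradicting $X\in\CM\Lambda$. This explicit base case plus induction is precisely the content missing from your sketch.
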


\begin{proof} Let $\p \in \Spec R$ be non-maximal. We see if $X \in\mathcal{X}_n$, then $X_\p$ is projective over $\Lambda_\p$ by Lemma \ref{pdM}, and thus (2) holds. For assertion (1), we note that if $d=0$ there is nothing to show. In the case where $d=1$, $X$ is projective on the punctured spectrum, by Lemma \ref{syzfreepunc}. This implies that $\Ext_{\Lambda^{op}}^1(\Tr X, \Lambda)$ has finite length since $\Tr X_\p=0$ for any non-maximal prime ideal $\p$. Then the well-known exact sequence (see, e.g., \cite{LW}*{Proposition 12.8}) \[0 \to \Ext_{\Lambda^{op}}^1(\Tr X, \Lambda) \to X \to X^{**} \to \Ext_{\Lambda^{op}}^2( \Tr X, \Lambda) \to 0 \] shows that $\Ext_{\Lambda^{\op}}^1(\Tr X, \Lambda)$ embeds in $X$. But, $\depth_R X \geq 1$ since $d\geq 1$ so $X$ cannot contain a module of depth zero. Thus, $\Ext_{\Lambda^{op}}^1(\Tr X, \Lambda)=0$.

Now suppose $\Ext_{\Lambda^{op}}^i(\Tr X,\Lambda)=0$ for $i=1,\dots,k-1$ for some $2\leq k \leq d$. We begin with a projective resolution \[ ...\to P_k \to P_{k-1} \to \dots \to P_1 \to P_0 \to \Tr X \to 0.\] Dualizing the above exact sequence, and utilizing the fact that $X\cong X^{**}$, we get an exact sequence \[0 \to X \to P_2^* \to P_3^* \to \dots \to P_{k-1}^* \to (\Omega^k \Tr X)^* \to \Ext_{\Lambda^{op}}^{k}(\Tr X, \Lambda) \to 0,\]  where $\depth_R (\Omega^k \Tr X)^* \geq 2$. Since $\Ext^{k}_{\Lambda^{op}}(\Tr X, \Lambda)$ has finite length and $P_i^*\in \CM \Lambda$ for all $i$, the Depth Lemma implies $\depth_R X\leq d-1$, which is impossible since $X\in \CM\Lambda$. Thus, it must be that $\Ext_{\Lambda^{op}}^k(\Tr X,\Lambda)=0$. Thus part (1) is proved by induction.     \end{proof}

The following is the analog of \cite{IW1}*{Prop 2.17}, and the proof is similar.

\begin{prop} Let $\Lambda$ be an order over a CM ring $R$ of Krull dimension $d$ with canonical module $\omega_R$. The following are equivalent:

\begin{enumerate} 
\item $\Lambda$ is $n$-nonsingular.
\item $\gldim \Lambda_\m \leq n+d$ for all maximal ideals $\m \in \Spec R$. 
\item $\CM\Lambda \subset \pd_{\leq n} \Lambda$.
\item $\pd_{\Lambda^{\op}} \omega_\Lambda \leq n$ and $\gldim \Lambda <\infty$.
\end{enumerate}

\label{nnon_equiv}
\end{prop}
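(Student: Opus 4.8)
The plan is to prove the cycle (1)$\Rightarrow$(2)$\Rightarrow$(3)$\Rightarrow$(4)$\Rightarrow$(1). The implication (1)$\Rightarrow$(2) is immediate: if $\gldim\Lambda_\p\leq n+\dim R_\p$ for all $\p\in\Spec R$, then in particular it holds for maximal ideals. For (2)$\Rightarrow$(3), let $M\in\CM\Lambda$; we want $\pd_\Lambda M\leq n$. Since projective dimension can be checked locally at maximal ideals of $R$, it suffices to bound $\pd_{\Lambda_\m}M_\m$ for each maximal $\m$. Now $M_\m\in\CM\Lambda_\m$, so picking a maximal $M_\m$-regular sequence $x_1,\dots,x_d\in\m R_\m$ (of length $d=\dim R_\m$ since $M_\m$ is MCM) and invoking Lemma \ref{lemma:1}, we get $\pd_{\Lambda_\m}(M_\m/(x_1,\dots,x_d)M_\m)=\pd_{\Lambda_\m}M_\m+d$. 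The left side is at most $\gldim\Lambda_\m\leq n+d$, so $\pd_{\Lambda_\m}M_\m\leq n$; this is exactly the argument of Lemma \ref{pdM} applied at maximal ideals. Hence $\pd_\Lambda M\leq n$, i.e.\ $\CM\Lambda\subset\pd_{\leq n}\Lambda$.

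For (3)$\Rightarrow$(4): first, $\omega_\Lambda\in\CM\Lambda^{\op}$, and the hypothesis applied to $\Lambda^{\op}$ — which also satisfies (3) since the whole proposition is stated symmetrically, or more carefully, since $D_d$ is an exact duality $\CM\Lambda\to\CM\Lambda^{\op}$ and one can transport the argument — gives $\pd_{\Lambda^{\op}}\omega_\Lambda\leq n$. (Alternatively, one argues directly: take an $\omega_\Lambda$-regular sequence of length $d$ in $\m$, reduce modulo it, and the resulting module has finite length and hence finite projective dimension bounded by $\gldim\Lambda^{\op}$, but we have not yet established $\gldim\Lambda^{\op}<\infty$, so the cleaner route is via the duality.) To see $\gldim\Lambda<\infty$: any $X\in\mod\Lambda$ has, by the Depth Lemma over $R$ together with the fact that $\Lambda$ is MCM over $R$, a high syzygy $\Omega^{d}_\Lambda X\in\CM\Lambda$, whose projective dimension is at most $n$ by (3); therefore $\pd_\Lambda X\leq n+d<\infty$. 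Since $\Lambda$ is left and right Noetherian, $\gldim\Lambda=\sup_X\pd_\Lambda X\leq n+d<\infty$. This establishes (4).

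Finally, (4)$\Rightarrow$(1): assume $\pd_{\Lambda^{\op}}\omega_\Lambda\leq n$ and $\gldim\Lambda<\infty$. The first condition says $\Lambda$ is $n$-canonical. Localizing at any $\p\in\Spec R$, $\Lambda_\p$ is an order over the CM local ring $R_\p$ (with canonical module $(\omega_R)_\p$), and $\omega_{\Lambda_\p}\cong(\omega_\Lambda)_\p$, so $\pd_{\Lambda_\p^{\op}}\omega_{\Lambda_\p}\leq\pd_{\Lambda^{\op}}\omega_\Lambda\leq n$; that is, $\Lambda_\p$ is again $n$-canonical. Now apply Theorem \ref{projdimcanon} to $\Lambda_\p$: for any $X\in\mod\Lambda_\p$ with $\pd_{\Lambda_\p}X<\infty$ (which holds for all $X$ since $\gldim\Lambda_\p\leq\gldim\Lambda<\infty$) we have $\pd_{\Lambda_\p}X+\depth_{R_\p}X\leq\dim R_\p+n$. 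Taking $X$ to realize the global dimension — or simply noting $\depth_{R_\p}X\geq 0$ — gives $\gldim\Lambda_\p\leq n+\dim R_\p$ for every $\p$, which is (1).

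The one genuinely delicate point, and the place I would be most careful, is the claim $\pd_{\Lambda^{\op}}\omega_\Lambda\leq n$ inside (3)$\Rightarrow$(4): the hypothesis (3) is literally a statement about $\CM\Lambda$, but $\omega_\Lambda$ is naturally a $\CM\Lambda^{\op}$-object, so one must either (i) observe that (3) holds for $\Lambda^{\op}$ by symmetry of the hypotheses (the cleanest, since conditions (1),(2),(4) are visibly left/right symmetric in content once $\gldim$ is known finite, via $\gldim\Lambda=\gldim\Lambda^{\op}$), or (ii) route through the exact canonical dual $D_d\colon\CM\Lambda\to\CM\Lambda^{\op}$ of the excerpt. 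I would use route (i), remarking explicitly that the equivalence is being proved for all orders simultaneously so that "(3) for $\Lambda$" may be replaced by "(3) for $\Lambda^{\op}$" whenever convenient — exactly as in the cited \cite{IW1}*{Prop 2.17}.
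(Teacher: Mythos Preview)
Your cycle $(1)\Rightarrow(2)\Rightarrow(3)$ and $(4)\Rightarrow(1)$ matches the paper's proof essentially verbatim (including the appeal to the argument of Lemma~\ref{pdM} for $(2)\Rightarrow(3)$ and to Theorem~\ref{projdimcanon} after localizing for $(4)\Rightarrow(1)$).

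The difference, and the place where your write-up has a genuine gap, is $(3)\Rightarrow(4)$. You correctly deduce $\gldim\Lambda\leq n+d<\infty$ from $(3)$ via the Depth Lemma. But for $\pd_{\Lambda^{\op}}\omega_\Lambda\leq n$ you only have $\omega_\Lambda\in\CM\Lambda^{\op}$ and try to invoke ``$(3)$ for $\Lambda^{\op}$''. Neither of your proposed routes closes this as written. Route (i) (``the proposition is being proved for all orders, so swap $\Lambda$ and $\Lambda^{\op}$ at will'') is circular as phrased: you do not yet know $(3)$ holds for $\Lambda^{\op}$. It \emph{can} be repaired, but only by spelling out the chain you never state: from $(3)$ you have already shown $\gldim\Lambda\leq n+d$, hence $(2)$ for $\Lambda$; since $\gldim\Lambda_\m=\gldim(\Lambda_\m)^{\op}=\gldim(\Lambda^{\op})_\m$, this gives $(2)$ for $\Lambda^{\op}$; now the already-proved implication $(2)\Rightarrow(3)$, applied to $\Lambda^{\op}$, yields $(3)$ for $\Lambda^{\op}$, and then $\omega_\Lambda\in\CM\Lambda^{\op}$ finishes it. Route (ii) via $D_d$ does not work: $D_d$ sends projectives in $\CM\Lambda$ to $\add\omega_\Lambda$ in $\CM\Lambda^{\op}$, not to projectives, so a short $\Lambda$-projective resolution of $D_d(\omega_\Lambda)$ dualizes to an $\add\omega_\Lambda$-resolution, not a $\Lambda^{\op}$-projective one.

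The paper's argument for $(3)\Rightarrow(4)$ is more direct and avoids this detour entirely. It uses the observation you overlook: $\omega_\Lambda$ is a $\Lambda$-\emph{bimodule}, and membership in $\CM\Lambda$ is purely an $R$-depth condition, so $\omega_\Lambda\in\CM\Lambda$ as a \emph{left} $\Lambda$-module as well. Hypothesis $(3)$ then gives $\pd_{\Lambda}\omega_\Lambda\leq n$ immediately. Since $\gldim\Lambda<\infty$ forces $\gldim\Lambda^{\op}<\infty$, in particular $\pd_{\Lambda^{\op}}\omega_\Lambda<\infty$, and Proposition~\ref{prop2} gives $\pd_{\Lambda^{\op}}\omega_\Lambda=\pd_{\Lambda}\omega_\Lambda\leq n$.
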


\begin{proof} The first 3 implications are the same argument as \cite{IW1}, but we include them for the convenience of the reader.
$(1)\Rightarrow (2)$  This is immediate. 

$(2) \Rightarrow (3)$ This proof is nearly identical to the proof of Lemma \ref{pdM}.

$(3) \Rightarrow (4)$ Since $\omega_\Lambda \in \CM \Lambda$, we know it has projective dimension over $\Lambda$ at most $n$ by (3). Also, since each $d^{th}$ syzygy is MCM by the Depth Lemma, we have $\gldim \Lambda< \infty$. Since $\gldim\Lambda^{\op}=\gldim\Lambda<\infty$, Proposition \ref{prop2} then implies that $\pd_{\Lambda^{op}}\omega_\Lambda\leq n.$  

$(4) \Rightarrow (1)$ Let $X$ be in $\CM(\Lambda_\p)$. Since localization can only reduce projective dimension, we have that $\pd_{\Lambda^{\op}_\p} \omega_{\Lambda_\p} \leq n$ and $\gldim \Lambda_\p<\infty$.  The result then follows from Corollary \ref{cor1}. \end{proof}

\begin{remark}\label{Rmk:48}  One might ask if we can strengthen condition (3) to be a set equality. If $n\geq 1$, the answer is no: consider a regular sequence  $\underline{x}=x_1,\dots,x_d$ on $\Lambda$, and take the Koszul complex over $\Lambda$ on $\underline{x}$. Then this is exact and has length $d$. Then $\Omega^{d-1}(\Lambda/\underline{x}\Lambda)$ has depth $d-1$ by the Depth Lemma, but the end of the Koszul complex gives a length one resolution. Thus $\Omega^{d-1}(\Lambda/\underline{x}\Lambda) \in \pd_{\leq n} \Lambda$ but is not in $\CM \Lambda$. It is clear that $(3)$ is equivalent to $\mathcal{X}_n \subset \proj \Lambda$. \end{remark}

\section{Gorenstein Projectives and Auslander's Theorem}

The goal of this section is to prove the following variation of Auslander's Theorem, \cite{Aus1}.

\begin{theorem} \label{AuslanderGen} Let $R$ be a CM local ring with canonical module and suppose $\Lambda$ is an $R$-order such that $\Lambda$ and $\Lambda^{op}$ are $n$-canonical. If $\Lambda$ has only finitely many nonisomorphic indecomposable modules in $\mathcal{X}_n$, then $\Lambda$ is an $n$-isolated singularity. 
\end{theorem}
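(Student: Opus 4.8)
The plan is to reduce the statement to the Ext-criterion of Lemma~\ref{ext1}: it suffices to prove that $\ell_R(\Ext^1_\Lambda(N,M))<\infty$ for all $M,N\in\cals$, and then to force this finiteness out of the finite-type hypothesis by an Auslander-type argument. First I would note that, since $\Lambda$ and $\Lambda^{\op}$ are both $n$-canonical, the equivalence recorded just after Definition~\ref{nCanonDef} gives $\id_\Lambda\Lambda\le n+\dim R$ and $\id_{\Lambda^{\op}}\Lambda\le n+\dim R$, so $\Lambda$ is Iwanaga-Gorenstein; consequently $\cals=\mathcal{X}_n=\add\{\Omega^n\CM\Lambda\}$ is precisely the category $\mathrm{GProj}\,\Lambda$ of Gorenstein-projective $\Lambda$-modules. (By Lemma~\ref{syzext} every $X\in\mathcal{X}_n$ satisfies $\Ext^{>0}_\Lambda(X,\Lambda)=0$, which over an Iwanaga-Gorenstein ring forces Gorenstein-projectivity; conversely a Gorenstein-projective module is an arbitrarily high syzygy over the $R$-order $\Lambda$ and hence maximal Cohen-Macaulay over $R$ by the Depth Lemma.) In particular $\cals$ is a resolving subcategory of $\mod\Lambda$, closed under extensions, kernels of epimorphisms, summands, syzygies and cosyzygies; and by hypothesis it has an additive generator $G$, the direct sum of a representative of each of its finitely many indecomposables, with $\add G=\cals$. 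It therefore suffices to prove $\ell_R(\Ext^1_\Lambda(G,G))<\infty$.

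Suppose this fails, so that $E:=\Ext^1_\Lambda(G,G)$ is a finitely generated $R$-module whose support $\Supp_R E$ is closed of dimension $\ge1$; fix a prime $\p\in\Supp_R E$ with $\dim R/\p=1$ and an element $x\in\m\setminus\p$. Since $E_\p=\Ext^1_{\Lambda_\p}(G_\p,G_\p)\ne0$, there are classes $\xi\in E$ with $\xi_\p\ne0$, and each gives a non-split extension $0\to G\to E_\xi\to G\to0$ of $\Lambda$-modules; here $E_\xi$ lies in $\cals$ (as $\cals$ is closed under extensions) and is generated by at most $2\mu_R(G)$ elements over $R$, hence is a direct sum of at most $2\mu_R(G)$ of the finitely many indecomposables of $\cals$. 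Therefore the modules $E_\xi$ realise only finitely many isomorphism classes. It remains to exhibit, using the infinite length of $E$, a sequence of such classes whose middle terms $E_\xi$ represent infinitely many isomorphism classes — contradicting finite type, and hence, by Lemma~\ref{ext1}, finishing the proof.

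I expect this last task to be the only genuine obstacle; it is the heart of Auslander's theorem and of the completion-free argument of Huneke-Leuschke in \cite{HL}, which I would adapt here. Concretely, one works along the one-dimensional domain $R/\p$ with the parameter $x$: fix $\xi_0\in E$ with $(\xi_0)_\p\ne0$ and build a family $\{\xi_k\}_{k\ge1}\subseteq E$ by a pull-back construction over the tower $\{R/(\p+x^kR)\}_{k\ge1}$ — staying inside $\cals$ because $\cals$ is resolving — arranged so that an invariant detecting the non-split part of $\xi_k$ (for instance the length of the torsion submodule of a suitable reduction of $E_{\xi_k}$ along $V(\p)$) strictly increases with $k$, forcing the $E_{\xi_k}$ to be pairwise non-isomorphic. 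Granting this, $\ell_R(\Ext^1_\Lambda(N,M))<\infty$ for all $M,N\in\cals$, and Lemma~\ref{ext1} shows that $\Lambda$ is an $n$-isolated singularity.
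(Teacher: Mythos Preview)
Your reduction is exactly the paper's: identify $\cals=\mathcal X_n$ with $G\proj\Lambda$ (Corollary~\ref{extclosed}), use extension-closure to bound the middle terms, and finish via Lemma~\ref{ext1}. Up to and including the observation that for fixed $M,N\in\cals$ there are only finitely many isomorphism types of middle terms $X$, you are following Huneke--Leuschke and the paper verbatim.

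The gap is in your final paragraph. You propose to derive a contradiction by \emph{producing} infinitely many pairwise non-isomorphic middle terms $E_{\xi_k}$ from the infinite length of $E=\Ext^1_\Lambda(G,G)$, via some unnamed ``invariant detecting the non-split part'' built from reductions along $V(\p)$. This is not the Huneke--Leuschke argument, and as written it does not work: distinct extension classes routinely share isomorphic middle terms, and for $E_\xi\in\cals$ (hence MCM over $R$) there is no evident numerical invariant of $E_\xi/\p E_\xi$ that must strictly increase along your tower. You have not said what the invariant is or why the pull-back construction forces it to grow, so the contradiction is not established.

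The paper (and \cite{HL}) run the implication in the \emph{other} direction: from ``only finitely many middle terms'' they conclude directly that $\Ext^1$ has finite length, with no attempt to distinguish middle terms. The device is Miyata's theorem (Lemma~\ref{MiyataLemm}): if $X_\alpha\cong X_{r\alpha}$ for some $r\in\m$, then $\alpha\in r\,\Ext^1_\Lambda(N,M)$. Given $\alpha$ and $r\in\m$, pigeonhole on the sequence $\{X_{r^n\alpha}\}_{n\ge0}$ produces an infinite set $n_1<n_2<\cdots$ with $X_{r^{n_i}\alpha}$ mutually isomorphic; Miyata then forces $\beta:=r^{n_1}\alpha\in r^{n_i-n_1}\Ext^1$ for all $i$, and the Krull Intersection Theorem kills $\beta$. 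Thus every element of $\Ext^1$ is $\m$-power torsion, i.e.\ $\Ext^1$ has finite length (Lemma~\ref{HLlemma}). Replace your last paragraph with this argument and the proof is complete.
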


The proof of this will rely on the notion of \emph{Gorenstein Projective} modules. Originally defined by Auslander and Bridger in \cite{AB}, a module $M$ over an order $\Lambda$ is called \emph{Gorenstein Projective} if $M$ is reflexive (i.e., the natural map $M\to M^{**}$ is an isomorphism) and \[\Ext_\Lambda^i(M,\Lambda)=\Ext_{\Lambda^{op}}^i(M^*,\Lambda)=0\] for all $i>0$.  

We let $G\proj \Lambda$ denote the full subcategory of $\mod \Lambda$ consisting of all Gorenstein projective modules. Our interest in Gorenstein projectives is motivated by the following fact, which is a variation of a well-known property for $n$-Gorenstein rings. For details, see \cite{Enochs}*{Section 10.2}.

\begin{prop} Let $R$ be a CM local ring with canonical module $\omega$. Suppose $\Lambda$ is an $R$-order such that $\Lambda$ and $\Lambda^{op}$ are $n$-canonical, where $n\geq 1$. Let $M$ be a $\Lambda$-module. Then, $G\proj \Lambda=\mathcal{X}_n$ holds. \end{prop}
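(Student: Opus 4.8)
The plan is to recast the two-sided $n$-canonical hypothesis as an injective-dimension bound, invoke the classical description of Gorenstein projective modules over an Iwanaga--Gorenstein ring, and then identify that description with $\mathcal{X}_n$ by a syzygy-and-depth argument.

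First I would record that $\Lambda$ is $(n+d)$-Iwanaga--Gorenstein, where $d=\dim R$. Since $\Lambda$ is module-finite over the Noetherian ring $R$ it is Noetherian on both sides, so the notion makes sense. Applying the equivalence $\pd_{\Lambda^{\op}}\omega_\Lambda\le n\iff \id_\Lambda\Lambda\le n+d$ (the Proposition above, which rests on \cite{GotoKenji}*{Proposition 1.1(3)}) to $\Lambda$ gives $\id_\Lambda\Lambda\le n+d$, and applying it to the $R$-order $\Lambda^{\op}$ (for which $\omega_{\Lambda^{\op}}$ is $\omega_\Lambda$ viewed on the other side) gives $\id_{\Lambda^{\op}}\Lambda\le n+d$. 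With this in hand I would quote the standard fact (\cite{Enochs}*{Section 10.2}; equivalently, finiteness of Gorenstein dimension): over an $m$-Iwanaga--Gorenstein ring $\Gamma$ a finitely generated module $N$ is Gorenstein projective precisely when $\Ext^i_\Gamma(N,\Gamma)=0$ for $1\le i\le m$, equivalently for all $i>0$, equivalently when $N$ is a direct summand of an $m$-th syzygy module. I will use this with $\Gamma=\Lambda$ and $m=n+d$.

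For the inclusion $\mathcal{X}_n\subseteq G\proj\Lambda$: if $M\in\mathcal{X}_n$, then Lemma \ref{syzext} already gives $\Ext^i_\Lambda(M,\Lambda)=0$ for all $i>0$, so $M$ is Gorenstein projective by the criterion just quoted; this is the step where being $n$-canonical, and not merely $(n+d)$-Iwanaga--Gorenstein, does the work. For the reverse inclusion $G\proj\Lambda\subseteq\mathcal{X}_n$: given $M\in G\proj\Lambda$, the criterion provides an exact sequence $0\to M\to Q_{n+d-1}\to\cdots\to Q_0\to N\to 0$ with all $Q_i$ projective. Setting $N':=\Omega^d_\Lambda N$, the Depth Lemma together with the fact that $\Lambda$ is maximal Cohen--Macaulay over $R$ forces $N'\in\CM\Lambda$ (a $d$-th syzygy of any finitely generated $\Lambda$-module lies in $\CM\Lambda$). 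Truncating the displayed sequence exhibits $M=\Omega^n_\Lambda N'$, hence $M\in\Omega^n\CM\Lambda\subseteq\mathcal{X}_n$. Combining the two inclusions yields $G\proj\Lambda=\mathcal{X}_n$.

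The only delicate point I anticipate is this last step: a Gorenstein projective module is a priori a syzygy of arbitrarily high order but of a module of uncontrolled depth, and one must see it concretely as an $n$-th syzygy of a genuinely maximal Cohen--Macaulay module; the Depth Lemma supplies exactly this once the total syzygy count exceeds $d$. If one prefers to avoid the ``direct summand of an $m$-th syzygy'' clause of the quoted criterion, the same conclusion follows by reading off the first $n+d$ cosyzygies of a complete projective resolution of $M$. I would also note in passing that the standing assumption $n\ge 1$ is what places $\mathcal{X}_n$ strictly beyond $\CM\Lambda$ and outside the classical Gorenstein-order picture, but the argument above is uniform in $n$.
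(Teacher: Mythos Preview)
Your proof is correct, and for the inclusion $G\proj\Lambda\subseteq\mathcal{X}_n$ it is essentially the paper's argument spelled out in more detail (complete resolution, read off $n+d$ cosyzygies, apply the Depth Lemma to land in $\CM\Lambda$). The difference lies in the inclusion $\mathcal{X}_n\subseteq G\proj\Lambda$. The paper verifies the Auslander--Bridger definition by hand: given $M=\Omega^nX$ with $X\in\CM\Lambda$ non-projective, Lemma~\ref{syzext} gives $\Ext^i_\Lambda(M,\Lambda)=0$; then $M$ must have infinite projective dimension, so dualizing its resolution exhibits $M^*$ and $\Tr M$ as arbitrarily high $\Lambda^{\op}$-syzygies, and Lemma~\ref{syzext} on the $\Lambda^{\op}$ side yields $\Ext^i_{\Lambda^{\op}}(M^*,\Lambda)=0$ together with reflexivity via the sequence $0\to\Ext^1_{\Lambda^{\op}}(\Tr M,\Lambda)\to M\to M^{**}\to\Ext^2_{\Lambda^{\op}}(\Tr M,\Lambda)\to0$. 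You instead upgrade the two-sided $n$-canonical hypothesis to ``$\Lambda$ is $(n+d)$-Iwanaga--Gorenstein'' and invoke the standard criterion that over such rings one-sided $\Ext$-vanishing already characterizes Gorenstein projectivity, so Lemma~\ref{syzext} alone finishes. Your route is shorter and is precisely what the paper's reference to \cite{Enochs}*{Section 10.2} points toward, at the cost of importing that criterion as a black box; the paper's hands-on argument stays self-contained within the transpose framework and makes visible exactly where the $\Lambda^{\op}$ hypothesis is used (namely, to get $\Ext^i_{\Lambda^{\op}}(M^*,\Lambda)=0$ and reflexivity).
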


\begin{proof} Since Gorenstein projectives occur as syzygies in complete resolutions, it is clear that \begin{equation*}G\proj\Lambda\subset \add \Omega^n\CM\Lambda.\end{equation*} To complete the proof we show the reverse inclusion. Let $M=\Omega^n X$ for a maximal Cohen-Macaulay module $X$, and suppose $M$ is not a projective module. By Lemma \ref{syzext} we have that $\Ext_\Lambda^i(M,\Lambda)=0$ for all $i>0$. Then, by dualizing a projective resolution of $M$, we get an exact sequence $$0\to M^* \to P_0^* \to P_1^* \to \cdots.$$ Therefore, we see $M^*$ is an arbitrarily high syzygy. By Lemma \ref{syzext} again we have $\Ext^i_{\Lambda^{op}}(M^*,\Lambda)=0$ for $i>0$. All that remains to show is that $M$ is reflexive. Note that $\Tr M$ fits into the above exact sequence as follows \[0\to \Tr M \to P_2^* \to P_3^* \to \cdots.\] Thus, $\Tr M$ is also an arbitrarily high syzygy and satisfies the same $\Ext$ vanishing as $M$. Thus the exact sequence  \[0\to \Ext_{\Lambda^{op}}^1(\Tr M, \Lambda) \to M \to M^{**} \to \Ext_{\Lambda^{op}}^2(\Tr M, \Lambda) \to 0\] implies that $M\cong M^{**}$.   \qedhere

\end{proof}

The key use of Gorenstein projectives is that they are closed under extensions. This has been shown in various places, see e.g., \cite{AR91}*{Proposition 5.1}.

\begin{corollary} \label{extclosed} Let $\Lambda$ be an order over a CM local ring $R$ with a canonical module. Suppose $\Lambda$ and $\Lambda^{op}$ are $n$-canonical. Then $\mathcal{X}_n$ is closed under extensions.
\end{corollary}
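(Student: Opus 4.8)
The plan is to deduce this corollary directly from the preceding proposition, which identifies $\mathcal{X}_n$ with the category $G\proj \Lambda$ of Gorenstein projective $\Lambda$-modules (under the hypothesis that $\Lambda$ and $\Lambda^{op}$ are both $n$-canonical with $n\geq 1$). Once that identification is in place, the statement reduces to the well-known fact that Gorenstein projective modules are closed under extensions, for which the reference \cite{AR91}*{Proposition 5.1} is cited just before the corollary. So the real content of the proof is simply to quote these two inputs and observe that they combine.

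Concretely, I would argue as follows. Suppose $0 \to X' \to X \to X'' \to 0$ is a short exact sequence of $\Lambda$-modules with $X', X'' \in \mathcal{X}_n$. By the proposition, $X'$ and $X''$ lie in $G\proj \Lambda$. Since $G\proj \Lambda$ is closed under extensions (by \cite{AR91}*{Proposition 5.1}, or by a direct horseshoe-type argument: reflexivity of $X$ follows from the snake/five lemma applied to the natural maps to the double duals, and the required $\Ext$-vanishing $\Ext_\Lambda^i(X,\Lambda) = \Ext_\Lambda^i(X^*,\Lambda) = 0$ for $i>0$ follows from the long exact sequences in $\Ext_\Lambda(-,\Lambda)$ associated to the given sequence and its dual, using that the outer terms vanish), we conclude $X \in G\proj \Lambda$. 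Applying the proposition again in the reverse direction, $X \in \mathcal{X}_n$. Hence $\mathcal{X}_n$ is closed under extensions.

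One small wrinkle I should address: the identification $\mathcal{X}_n = G\proj \Lambda$ in the proposition is stated only for $n \geq 1$, so I would either restrict the corollary's proof to that case explicitly or note that the $n=0$ case (where $\Lambda$ is a Gorenstein order and $\mathcal{X}_0 = \CM\Lambda$) is already classical — $\CM\Lambda$ is closed under extensions by the Depth Lemma, since an extension of two MCM $R$-modules is again MCM. The paper's convention seems to be that $n \geq 1$ is the interesting range, so a one-line remark handling $n=0$ separately (or simply working in the $n\geq 1$ setting inherited from the proposition) suffices.

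The main — and essentially only — obstacle is that there is no obstacle: the corollary is a formal consequence of the proposition plus a citation. The only thing requiring care is making sure the hypotheses line up (both $\Lambda$ and $\Lambda^{op}$ $n$-canonical, so that the proposition applies and, symmetrically, $G\proj \Lambda$ has its good closure properties), and being explicit that we are invoking the equivalence of categories in both directions. If one wanted a self-contained proof rather than citing \cite{AR91}, the extension-closure of Gorenstein projectives is itself routine from the definition, as sketched above, so even that fallback is short.
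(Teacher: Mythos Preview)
Your proposal is correct and matches the paper's approach exactly: the paper states the corollary without proof, relying on the immediately preceding proposition (identifying $\mathcal{X}_n$ with $G\proj\Lambda$) together with the cited fact \cite{AR91}*{Proposition 5.1} that Gorenstein projectives are closed under extensions. Your remark on the $n=0$ case is a reasonable addition, since the paper's proposition is stated only for $n\geq 1$ while the corollary omits that hypothesis.
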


We now return to proving the main theorem. The proof of this involves several lemmas, and it follows closely Huneke and Leuschke's proof of Auslander's Theorem, \cite{HL}. The following Theorem due to Miyata is our first step.

\begin{lemma}\cite{Miyata}*{Theorem 2} Let $\Lambda$ be a module finite algebra over a commutative Noetherian ring $R$. Suppose we have an exact sequence of finitely generated $\Lambda$-modules \[M\to X \to N \to 0\] and that $X\cong M\oplus N$. Then the sequence is a split short exact sequence. \end{lemma}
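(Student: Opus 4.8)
The plan is to bootstrap from the classical determinant-trick fact that a surjective endomorphism of a finitely generated module over a commutative Noetherian ring is automatically an isomorphism. Since $\Lambda$ is module finite over the Noetherian ring $R$, every finitely generated $\Lambda$-module is finitely generated as an $R$-module, so this fact applies verbatim to surjective $\Lambda$-linear endomorphisms of objects of $\mod\Lambda$.

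First I would reduce the assertion to producing a single $\Lambda$-linear retraction $\rho\colon X\to M$ with $\rho f=\idmap_M$: such a $\rho$ forces $f$ to be a monomorphism, exhibits $f(M)=\Ker g$ as a direct summand of $X$, and lets $g$ restrict to an isomorphism on a complement, which is precisely the assertion that $0\to M\xrightarrow{f}X\xrightarrow{g}N\to 0$ is split short exact. Then, fixing an isomorphism $h\colon M\oplus N\xrightarrow{\ \sim\ }X$, I would put the data in block form: $h^{-1}f$ has components $a\in\End_\Lambda M$ and $c\in\Hom_\Lambda(M,N)$ (so $f=h\bigl(\begin{smallmatrix}a\\ c\end{smallmatrix}\bigr)$), while $gh$ has components $b\in\Hom_\Lambda(M,N)$ and $d\in\End_\Lambda N$ (so $gh(m,n)=b(m)+d(n)$); the hypotheses then become $ba+dc=0$, the sharper equality $\im\bigl(\begin{smallmatrix}a\\ c\end{smallmatrix}\bigr)=\Ker(gh)$, and $b(M)+d(N)=N$.

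The core of the argument is to assemble these maps into a single $\Lambda$-endomorphism $\Phi$ of the finitely generated module $M\oplus N$ that is \emph{surjective}; the fact quoted above then promotes $\Phi$ to an isomorphism, from which one extracts the retraction $\rho$ (equivalently, a section of $g$), and the proof is complete. I expect this to be the one real obstacle. The naive candidate $\bigl(\begin{smallmatrix}a&0\\ c&d\end{smallmatrix}\bigr)$ is visibly not surjective in general — surjectivity of it would require $a$ and $d$ to be surjective on the nose, which genuinely fails — so surjectivity of $\Phi$ has to be forced by exploiting the \emph{equality} $\im f=\Ker g$ (not merely $gf=0$) together with $b(M)+d(N)=N$; this is exactly the content of Miyata's trick. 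If building $\Phi$ directly proves awkward, a safe fallback is to localize at a prime $\p$ of $R$ and pass to the completion $\widehat{R_\p}$ — exactness of the sequence, injectivity of $f$, and splitting can all be tested after this faithfully flat base change — thereby reducing to the complete local case, where projective covers and the Krull--Schmidt property are available and the bookkeeping behind $\Phi$ becomes routine.
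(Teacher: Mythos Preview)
The paper does not prove this lemma at all; it is quoted directly from Miyata and used as a black box, so there is no in-paper argument to compare against.

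Evaluated on its own, your outline has the right architecture --- the surjective-endomorphism principle is indeed the engine behind Miyata's result --- but there is a genuine gap at the one step that matters. You set up the block form and then write that building a surjective $\Phi\in\End_\Lambda(M\oplus N)$ ``is exactly the content of Miyata's trick,'' without actually producing $\Phi$. That sentence is circular: it defers to the very theorem you are proving. In fact no simple $2\times 2$ matrix in $a,b,c,d$ is surjective on the nose; the candidates $\bigl(\begin{smallmatrix}a&0\\c&1\end{smallmatrix}\bigr)$ and $\bigl(\begin{smallmatrix}1&0\\b&d\end{smallmatrix}\bigr)$ already require $a$ or $d$ to be surjective, which is essentially what you want. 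Miyata's own argument does \emph{not} manufacture such a $\Phi$ directly. Instead one reduces to the Artinian case (pass to $R/\m^k$) and counts lengths: additivity of $\Hom_\Lambda(-,M)$ together with $X\cong M\oplus N$ forces
\[
\ell\bigl(\Hom_\Lambda(X,M)\bigr)=\ell\bigl(\Hom_\Lambda(M,M)\bigr)+\ell\bigl(\Hom_\Lambda(N,M)\bigr),
\]
and comparing with the left-exact sequence $0\to\Hom_\Lambda(N,M)\to\Hom_\Lambda(X,M)\xrightarrow{f^*}\Hom_\Lambda(M,M)$ shows $f^*$ is surjective, producing the retraction. One then has to pass back from the Artinian quotients to $R$ itself, which is a second nontrivial step. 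Your fallback via completion and Krull--Schmidt is a legitimate route, but ``the bookkeeping becomes routine'' is again too fast: even with Krull--Schmidt, the assertion that every short exact sequence $0\to A\to A\oplus C\to C\to 0$ splits still needs an argument (the length count above, or an induction on the indecomposable summands of $C$). In short, you have correctly located where the difficulty lives but have not discharged it.
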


From this we are able to prove the following lemma about $\Ext^1_\Lambda(N,M)$. The proof is similar to  the one in \cite{HL}; it is omitted for this reason. 

\begin{lemma} \label{MiyataLemm} Let $(R,\m)$ be a CM local ring and $\Lambda$ an $R$-order. Fix $r \in\m$. Suppose we have an exact sequence of $\Lambda$-modules, \[\alpha : \ 0\to M \to X_\alpha \to N \to 0\] and a commutative diagram 

\bigskip
\begin{center}
$\begin{CD} 
\alpha: \ 0 @>>> M @>>> X_\alpha @>>> N @>>> 0 \\
			@. @VrVV      @VfVV  @| @.  \\
r\alpha: \ 0  @>>> M @>>>X_{r\alpha} @>>> N @>>> 0.
\end{CD}$
\end{center}

\medskip

\noindent If $X_\alpha \cong X_{r\alpha}$, then $\alpha \in r\Ext_\Lambda^1(N,M)$.

\end{lemma}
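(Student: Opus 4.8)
The plan is to reduce the statement to Miyata's theorem (quoted just above) by manufacturing an auxiliary short exact sequence out of the given commutative square, and then converting a splitting of that sequence into a morphism of extensions $r\alpha\to\alpha$ lying over $\idmap_N$; such a morphism will let me exhibit $[\alpha]$ as $r$ times another $\Ext$-class.

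Set $\alpha\colon 0\to M\xrightarrow{i}X_\alpha\xrightarrow{p}N\to 0$ and $r\alpha\colon 0\to M\xrightarrow{j}X_{r\alpha}\xrightarrow{q}N\to 0$, so that commutativity of the given diagram is exactly the pair of identities $fi=jr$ and $qf=p$; in particular the triple $(r,f,\idmap_N)$ is a morphism of extensions, whence $[r\alpha]=r[\alpha]$ in $\Ext^1_\Lambda(N,M)$. First I would introduce the sequence
\[
(\ast)\colon\qquad 0\to M\xrightarrow{\ \mu\ }X_\alpha\oplus M\xrightarrow{\ \nu\ }X_{r\alpha}\to 0,\qquad \mu(m)=(i(m),rm),\quad \nu(x,m)=f(x)-j(m),
\]
and check it is exact: $\nu\mu=0$ is precisely the relation $fi=jr$; $\mu$ is injective because $i$ is; $\nu$ is surjective because $q$ is; and a short chase using $qf=p$ and $\ker q=\im j$ identifies $\ker\nu$ with $\im\mu$.

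The point of $(\ast)$ is that its middle term is $X_\alpha\oplus M$, which by the hypothesis $X_\alpha\cong X_{r\alpha}$ is isomorphic to $M\oplus X_{r\alpha}$, i.e.\ to the direct sum of the two outer terms. Miyata's theorem therefore applies to $(\ast)$ and forces it to split. Fixing a section $s\colon X_{r\alpha}\to X_\alpha\oplus M$ of $\nu$ and writing $s=(s_1,s_2)$ with $s_1\colon X_{r\alpha}\to X_\alpha$ and $s_2\colon X_{r\alpha}\to M$, one composes $\nu s=\idmap$ with $q$ (using $qf=p$ and $qj=0$) to get $ps_1=q$; then $p(s_1 j)=qj=0$, so $s_1 j$ factors through $\im i$, say $s_1 j=ic$ for a unique $c\in\End_\Lambda(M)$. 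Thus $(c,s_1,\idmap_N)$ is a morphism of short exact sequences $r\alpha\to\alpha$, which is exactly the assertion $c_*[r\alpha]=[\alpha]$ in $\Ext^1_\Lambda(N,M)$. Combining this with $[r\alpha]=r[\alpha]$ and the $R$-linearity of the pushforward $c_*$ gives
\[
[\alpha]=c_*[r\alpha]=c_*(r[\alpha])=r\,c_*[\alpha]\in r\Ext^1_\Lambda(N,M),
\]
as desired.

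The one step I expect to require genuine thought is the construction of $(\ast)$: it must be rigged so that (i) its middle term equals the direct sum of its outer terms precisely under the hypothesis $X_\alpha\cong X_{r\alpha}$, letting Miyata's theorem take over, and (ii) the first coordinate $s_1$ of any splitting of $\nu$ automatically covers $\idmap_N$ and so yields an extension morphism $r\alpha\to\alpha$. Once $(\ast)$ is in place, everything else is routine diagram chasing together with the functoriality and $R$-linearity of $\Ext^1_\Lambda(N,-)$. (A retraction $(\rho_1,\rho_2)$ of $\mu$ would in fact show that $\alpha$ itself splits, since the resulting endomorphism $\rho_1 i=\idmap_M-r\rho_2$ of $M$ is a unit in the module-finite $R$-algebra $\End_\Lambda(M)$ because $r\in\m$; but only the displayed conclusion is used downstream.)
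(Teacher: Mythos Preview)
Your argument is correct and matches the approach the paper has in mind: the paper omits the proof entirely, citing \cite{HL}, and the Huneke--Leuschke argument proceeds exactly as you do, by building an auxiliary short exact sequence whose middle term is the direct sum of its ends (under the hypothesis $X_\alpha\cong X_{r\alpha}$), invoking Miyata's theorem to split it, and reading off from the splitting a morphism of extensions over $\idmap_N$. Your parenthetical observation that a retraction of $\mu$ in fact forces $\alpha$ itself to split (via invertibility of $\idmap_M-r\rho_2$ in the semilocal algebra $\End_\Lambda(M)$) is a nice sharpening, though, as you note, only the weaker conclusion $[\alpha]\in r\Ext^1_\Lambda(N,M)$ is needed downstream.
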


Now, we are able to prove the following lemma from which the main theorem follows. The proof is a straight-forward generalization of the commutative case.

\begin{lemma} \label{HLlemma} Suppose $\Lambda$ is an order over a CM local ring $(R,\m,k)$.  Given $\Lambda$-modules $M$ and $N$, if there are only finitely many choices (up to isomorphism) for $X$ such that there is an exact sequence of $\Lambda$-modules \[0\to M \to X \to N \to 0,\] then $\Ext^i_\Lambda(N,M)$ is a finite length $R$-module. \end{lemma}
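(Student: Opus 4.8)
The plan is to follow the classical Huneke–Leuschke strategy (itself an adaptation of Auslander's original argument), working one degree at a time and using the finiteness hypothesis through Miyata's lemma via Lemma \ref{MiyataLemm}. First I would treat the case $i=1$. The point is that every element $\alpha \in \Ext^1_\Lambda(N,M)$ is represented by a short exact sequence $0\to M\to X_\alpha\to N\to 0$, and the hypothesis says the middle term $X_\alpha$ ranges over only finitely many isomorphism classes as $\alpha$ varies. Fix $r\in\m$. For each $\alpha$, multiplication by $r$ on $M$ gives a pushout sequence representing $r\alpha$ with middle term $X_{r\alpha}$; but $r\alpha$ is again an element of $\Ext^1_\Lambda(N,M)$, so $X_{r\alpha}$ is one of the same finitely many modules. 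I would then argue, exactly as in \cite{HL}, that since $\m$-adic multiplication eventually must produce a repeated isomorphism class, for a suitable power one gets $X_{r\alpha}\cong X_{r^{j}\alpha}$ for some $j$, whence Lemma \ref{MiyataLemm} forces $r\alpha \in r\Ext^1_\Lambda(N,M)$; more precisely the standard argument shows $\Ext^1_\Lambda(N,M) = \m\cdot\Ext^1_\Lambda(N,M)$. Since $\Ext^1_\Lambda(N,M)$ is a finitely generated $R$-module (as $\Lambda$ is module-finite over the Noetherian ring $R$ and $M,N$ are finitely generated), Nakayama's lemma gives $\Ext^1_\Lambda(N,M)=0$...

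Actually I should be careful: the conclusion is that $\Ext^1_\Lambda(N,M)$ has \emph{finite length}, not that it vanishes, because the hypothesis is about one fixed pair $(M,N)$, and one must localize. So the correct line is: for a non-maximal prime $\p$, one shows $\Ext^1_\Lambda(N,M)_\p=0$ by running the above argument after localizing, or equivalently by observing that the finiteness-of-middle-terms hypothesis lets one deduce $\Ext^1_\Lambda(N,M)$ is annihilated by a power of $\m$. Concretely, I would show $\Ext^1_\Lambda(N,M)$ is a finitely generated $R$-module on which some element of $\m$ acts "nilpotently modulo lower filtration," forcing $\Supp_R \Ext^1_\Lambda(N,M) \subseteq \{\m\}$, and a finitely generated module supported only at the maximal ideal has finite length. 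This is the heart of the matter, and it is the step I expect to be the main obstacle: getting from "finitely many middle terms" to an honest statement like $\m^N \cdot \Ext^1_\Lambda(N,M) = 0$ requires the pigeonhole-plus-Miyata argument to be set up so that the repetition in isomorphism classes is uniform across all of $\Ext^1_\Lambda(N,M)$, not just along a single orbit.

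For the higher $\Ext^i$, $i\geq 2$, the standard device is dimension shifting: choose a short exact sequence $0\to M'\to P\to M\to 0$ with $P$ projective (or $0 \to M \to I \to M'' \to 0$ appropriately), so that $\Ext^i_\Lambda(N,M)\cong \Ext^{i-1}_\Lambda(N,M')$ for $i\geq 2$. To invoke the inductive hypothesis one must check that the pair $(M',N)$ still satisfies the finiteness condition — i.e., that there are only finitely many middle terms in extensions of $N$ by $M'$. I would handle this by the usual pullback argument: an extension $0\to M'\to Y\to N\to 0$ can be combined with $0\to M'\to P\to M\to 0$ to build an extension of $M$-related data, and one shows $Y$ is determined up to finitely many choices by $P$, $M$, and the finitely many middle terms available for the pair $(M,N)$; alternatively one notes that $Y\oplus P$ sits in an extension of $N$ by a module closely tied to $M$ and uses Miyata again. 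This bookkeeping is routine but must be done carefully. Once the inductive step is in place, the Lemma follows by induction on $i$.
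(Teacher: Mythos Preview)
Your $i=1$ argument has the right ingredients but misses the crucial step, and the worry you flag as ``the main obstacle'' is a red herring. You do \emph{not} need any uniformity across all of $\Ext^1_\Lambda(N,M)$, and you do \emph{not} need a statement like $\m^N\cdot\Ext^1_\Lambda(N,M)=0$. Since $\Ext^1_\Lambda(N,M)$ is a finitely generated $R$-module, finite length is equivalent to being supported only at $\m$, which in turn is equivalent to: for every $\alpha\in\Ext^1_\Lambda(N,M)$ and every $r\in\m$, some power $r^n$ kills $\alpha$. So it suffices to work one element and one $r$ at a time. The point you are missing is that pigeonhole gives you not just one repetition but \emph{infinitely many}: among the middle terms $X_n$ of $r^n\alpha$ there is an infinite sequence $n_1<n_2<\cdots$ with $X_{n_i}$ all isomorphic. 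Setting $\beta=r^{n_1}\alpha$, Lemma~\ref{MiyataLemm} applied to each pair $(n_1,n_i)$ gives $\beta\in r^{n_i-n_1}\Ext^1_\Lambda(N,M)$ for every $i$, hence $\beta\in\bigcap_t \m^t\Ext^1_\Lambda(N,M)=0$ by the Krull Intersection Theorem. Thus $r^{n_1}\alpha=0$. This is exactly what the paper does; your attempt to reach $\Ext^1=\m\cdot\Ext^1$ and invoke Nakayama would prove vanishing, which is false in general.

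For $i\geq 2$: the paper's proof treats only $i=1$ (and only $i=1$ is used in the application, Theorem~\ref{AuslanderGen} via Lemma~\ref{ext1}), so the ``$i$'' in the statement is almost certainly a typo for ``$1$''. Your dimension-shifting plan has a genuine gap anyway: after replacing $M$ by a syzygy $M'$, you have no control over the middle terms of extensions of $N$ by $M'$; the hypothesis is only about the pair $(M,N)$, and the pullback/pushout bookkeeping you sketch does not recover it for $(M',N)$.
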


\begin{proof} Let $\alpha \in \Ext_\Lambda^1(N,M)$ and $r\in \m$. It is well known that an $R$-module $M$ has finite length if and only if for all $r\in \m$ and $x\in M$ there is an integer $n$ so that $r^nx=0$. Thus, we must only show that $r^n\alpha=0$ for $n\gg 0$. For any integer $n$ we consider a representative \[r^n \alpha: \ \ \ \ 0\to M \to X_n \to N \to 0.\] Since only finitely many $X_n$ can exist up to isomorphism there is an infinite sequence $n_1<n_2<n_3<\dots$ such that $X_{n_i} \cong X_{n_j}$ for all pairs $i,j$. Set $\beta=r^{n_1}\alpha$, and let $i>1$. Then $r^{n_i}\alpha=r^{n_i-n_1}\beta$. We show $\beta=0$. We have, for each $i$, a commutative diagram 
\bigskip

\begin{center}
$\begin{CD} 
\hspace{6ex} \beta: \ 0 @>>> M @>>> X_{n_1} @>>> N @>>> 0 \\
			@.  @Vr^{n_i-n_1}VV      @VVV  @|| @. \\
r^{n_i-n_1}\beta: \ 0  @>>> M @>>>X_{n_i} @>>> N @>>> 0.
\end{CD}$
\end{center} 
\bigskip
\noindent By Lemma \ref{MiyataLemm}, since $X_{n_1} \cong X_{n_i}$, we have $\beta \in r^{n_i-n_1}\Ext_\Lambda^1(N,M)$ for every $i$. Since the sequence of $n_i$ is infinite and strictly increasing, this means $\beta\in \m^t\Ext_\Lambda^1(N,M)$ for all $t$. Finally, the Krull Intersection Theorem \cite{Mats}*{Theorem 8.10} implies $\beta=0$.\qedhere  

\end{proof}

Finally, we prove Theorem \ref{AuslanderGen}. Let us recall $\mathcal{X}_n=\add\{\Omega^n\CM\Lambda\}$. 

\begin{proof}[Proof of Theorem \ref{AuslanderGen}] Let $M,N\in \mathcal{X}_n.$ By Lemma \ref{ext1} we must only show that $\ell_R(\Ext_\Lambda^1(N,M))<\infty$. Consider any sequence $\alpha \in \Ext_\Lambda^1(N,M),$ \[\alpha: \ 0\to M \to X \to N \to 0.\] By Corollary \ref{extclosed}, we know $X\in \mathcal{X}_n$. Now since $M$ and $N$ are finitely generated and there are only finitely many indecomposable modules in $\mathcal{X}_n$, there are only finitely many possibilities for $X$. Namely, $X$ must be one of the finitely many modules in $\mathcal{X}_n$ generated by at most $\mu_\Lambda(M)+\mu_\Lambda(N),$ where $\mu_\Lambda(Y)$ denotes the minimum number of generators of $Y$ over $\Lambda$. Thus, $\ell_R(\Ext_\Lambda^1(N,M))<\infty$ by Lemma \ref{HLlemma}.   \qedhere

\end{proof}

\section{Application to Commutative Rings} \label{commutative}

In view of Theorem \ref{AuslanderGen} and \ref{gorAus}, we arrive at the following generalization of Auslander's Theorem in the case where $R$ is a suitable Gorenstein local ring.

\begin{corollary}\label{AuslanderPath} Let $R$ be a Gorenstein local ring which is an order over a regular local ring $S$ (e.g., if $R$ is complete), and let $Q$ be an acyclic quiver. If there exist only finitely many nonisomorphic indecomposable modules in $\Omega\CM(RQ)$, then $R$ is an isolated singularity, i.e., \[\gldim R_\p=\dim R_\p\] for all non-maximal primes ideals $\p \in \Spec R$. \end{corollary}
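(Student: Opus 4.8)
The plan is to deduce this statement by combining the two main results of the paper with the structural facts about path algebras established in Section \ref{pathalghom}. First I would apply Theorem \ref{gorAus}: since $R$ is a Gorenstein local domain which is an order over a regular local ring $S$, and $Q$ is acyclic, the path algebra $\Lambda := RQ$ is a $1$-canonical $R$-order. To invoke Theorem \ref{AuslanderGen} I also need $\Lambda^{op}$ to be $1$-canonical. Here I would observe that $\Lambda^{op} = RQ^{op} = R Q'$, where $Q'$ is the quiver obtained from $Q$ by reversing all arrows; since $Q$ is acyclic, so is $Q'$, and hence Theorem \ref{gorAus} applies again to give that $\Lambda^{op}$ is $1$-canonical. (Alternatively, one could pass through Proposition \ref{prop2} if $\gldim \Lambda < \infty$, but the opposite-quiver argument handles the infinite global dimension case as well.) The hypothesis that $\Omega\CM(RQ)$ has only finitely many indecomposable modules is exactly the finiteness hypothesis of Theorem \ref{AuslanderGen} with $n=1$, so we conclude that $\Lambda = RQ$ is a $1$-isolated singularity, i.e., $\gldim (RQ)_\p \leq 1 + \dim R_\p$ for all non-maximal primes $\p \in \Spec R$.

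The remaining and more substantial step is to descend this global dimension bound from $RQ$ to $R$ itself. Fix a non-maximal prime $\p \in \Spec R$. By Lemma \ref{pathtensor} (with $T = R_\p$, the ideal $I = 0$), localization commutes with forming the path algebra, so $(RQ)_\p \cong R_\p Q$. Thus I want to show: if $R_\p Q$ has finite global dimension and $\gldim R_\p Q \leq 1 + \dim R_\p$, then $R_\p$ is regular, hence $\gldim R_\p = \dim R_\p$ by the Auslander--Buchsbaum--Serre theorem, which gives exactly the isolated singularity conclusion. The key point is the contrapositive of the last assertion of Proposition \ref{pathalg:1}: if $R_\p$ is \emph{not} regular, then $\gldim R_\p Q = \infty$. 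Since $RQ$ is a $1$-isolated singularity, $\gldim (RQ)_\p = \gldim R_\p Q$ is finite for all non-maximal $\p$, so Proposition \ref{pathalg:1} forces $R_\p$ to be regular. This is the crux of the argument.

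One technical subtlety I would be careful about: Proposition \ref{pathalg:1} as stated assumes $R$ (here $R_\p$) is a regular \emph{local} ring in the first half and a non-regular \emph{local} ring in the second half, and $R_\p$ is indeed local, so this applies directly. Also, the quiver $Q$ should be assumed to be finite (so that $RQ$ is a module-finite $R$-algebra and the notion of $R$-order makes sense) — this is implicit throughout the path-algebra discussion, and I would either note it or take it as part of the standing hypotheses. With regularity of $R_\p$ in hand for every non-maximal $\p$, the identity $\gldim R_\p = \dim R_\p$ is immediate, completing the proof. I do not expect any genuine obstacle here beyond assembling these pieces; the conceptual work was already done in Theorems \ref{AuslanderGen} and \ref{gorAus} and in Proposition \ref{pathalg:1}.
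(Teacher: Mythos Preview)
Your proposal is correct and follows essentially the same route as the paper: apply Theorem \ref{gorAus} to get that $RQ$ and $RQ^{op}\cong RQ'$ (with $Q'$ the reversed quiver) are $1$-canonical, invoke Theorem \ref{AuslanderGen} to conclude $RQ$ is a $1$-isolated singularity, and then use Proposition \ref{pathalg:1} on each localization $(RQ)_\p\cong R_\p Q$ to force $R_\p$ regular. The only minor slip is that you call $R$ a Gorenstein local \emph{domain}, whereas the corollary only assumes $R$ is a Gorenstein local ring that is an order over a regular local ring $S$; this weaker hypothesis already suffices for the relevant part of the proof of Theorem \ref{gorAus}, so nothing is lost.
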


\begin{proof} We notice that by Theorem \ref{gorAus} $RQ$ is a 1-canonical order. Additionally, it is clear that $RQ^{op}$ is obtained by taking the path algebra over $R$ of the quiver obtained by reversing all arrows in $Q$. It follows that $RQ^{op}$ is also a 1-canonical $R$-order.  Thus, by Theorem \ref{AuslanderGen}, if there are only finitely many indecomposable modules in $\Omega\CM(RQ)$ we must have that $RQ$ is a 1-isolated singularity. By Proposition \ref{pathalg:1}, we know for any commutative ring $R$ that $\gldim RQ<\infty$ if and only if $\gldim R<\infty$. Thus,  $RQ$ can be a 1-isolated singularity if and only if $\gldim R_\p <\infty$ for all non-maximal primes $\p$. Since $R$ is commutative, this is only possible if $\gldim R_\p=\dim R_\p $. \qedhere

\end{proof}

We note that the proof of Corollary \ref{AuslanderPath} does not require completeness beyond ensuring $R$ is an order over a regular local ring. It would be nice to remove this assumption. In this vein we have the following question:

\begin{question} For a local ring $R$ and an acyclic quiver $Q$, is it true that $RQ$ has only finitely many indecomposable modules in $\mathcal{X}_n$ if and only if $\widehat{RQ}\cong\widehat{R}Q$ has?\end{question}

This question does not appear to be a straightforward generalization of the techniques used by Wiegand in \cite{Wiegand:98}. Even for path algebras, ascent from $RQ$ to the henzelisation $R^hQ$ seems to be difficult.  

\section*{Acknowledgements} The author would like to thank his advisor for much guidance while exploring the mathematics in this article. Through his advisor, this work was partially supported by the National Security Agency.

The author would also like to extend hearty thanks to the referee who made many excellent suggestions and had some very keen insights. The paper is substantially better thanks to their dedication and diligence.



\linespread{1}\selectfont



\newpage
\bibliographystyle{plain}
\bibliography{biblio}

@book {AB,
    AUTHOR = {Auslander, Maurice and Bridger, Mark},
     TITLE = {Stable module theory},
    SERIES = {Memoirs of the American Mathematical Society, No. 94},
 PUBLISHER = {American Mathematical Society, Providence, R.I.},
      YEAR = {1969},
     PAGES = {146},
   MRCLASS = {16.40 (18.00)},
  MRNUMBER = {0269685},
MRREVIEWER = {G. Michler},
}

@article {IW1,
    AUTHOR = {Iyama, Osamu and Wemyss, Michael},
     TITLE = {Maximal modifications and {A}uslander-{R}eiten duality for
              non-isolated singularities},
   JOURNAL = {Invent. Math.},
  FJOURNAL = {Inventiones Mathematicae},
    VOLUME = {197},
      YEAR = {2014},
    NUMBER = {3},
     PAGES = {521--586},
      ISSN = {0020-9910},
   MRCLASS = {13C14 (13D09 16G70)},
  MRNUMBER = {3251829},
MRREVIEWER = {Siamak Yassemi},
       DOI = {10.1007/s00222-013-0491-y},
       URL = {http://dx.doi.org/10.1007/s00222-013-0491-y},
}

@book {LW,
    AUTHOR = {Leuschke, Graham J. and Wiegand, Roger},
     TITLE = {Cohen-{M}acaulay representations},
    SERIES = {Mathematical Surveys and Monographs},
    VOLUME = {181},
 PUBLISHER = {American Mathematical Society, Providence, RI},
      YEAR = {2012},
     PAGES = {xviii+367},
      ISBN = {978-0-8218-7581-0},
   MRCLASS = {13C14 (13H10 16G10)},
  MRNUMBER = {2919145},
MRREVIEWER = {Geoffrey D. Dietz},
       DOI = {10.1090/surv/181},
       URL = {http://dx.doi.org/10.1090/surv/181},
}

@book {Yoshino,
    AUTHOR = {Yoshino, Yuji},
     TITLE = {Cohen-{M}acaulay modules over {C}ohen-{M}acaulay rings},
    SERIES = {London Mathematical Society Lecture Note Series},
    VOLUME = {146},
 PUBLISHER = {Cambridge University Press, Cambridge},
      YEAR = {1990},
     PAGES = {viii+177},
      ISBN = {0-521-35694-6},
   MRCLASS = {13C14 (13H10 14B05 16G70)},
  MRNUMBER = {1079937},
MRREVIEWER = {J{\"u}rgen Herzog},
       DOI = {10.1017/CBO9780511600685},
       URL = {http://dx.doi.org/10.1017/CBO9780511600685},
}

@article {SQ,
	AUTHOR = {Smith, S. Paul and Quarles, Christopher},
    TITLE = {On an example of {L}euschke},
    JOURNAL = {Private communication},
    YEAR = {2005},
    }

@article {IYAMA_HIRT,
    AUTHOR = {Iyama, Osamu},
     TITLE = {Higher-dimensional {A}uslander-{R}eiten theory on maximal
              orthogonal subcategories},
   JOURNAL = {Adv. Math.},
  FJOURNAL = {Advances in Mathematics},
    VOLUME = {210},
      YEAR = {2007},
    NUMBER = {1},
     PAGES = {22--50},
      ISSN = {0001-8708},
     CODEN = {ADMTA4},
   MRCLASS = {16E30 (16G70)},
  MRNUMBER = {2298819},
MRREVIEWER = {Cristi{\'a}n Novoa},
       DOI = {10.1016/j.aim.2006.06.002},
       URL = {http://dx.doi.org/10.1016/j.aim.2006.06.002},
}

@book {CE,
    AUTHOR = {Cartan, Henri and Eilenberg, Samuel},
     TITLE = {Homological algebra},
    SERIES = {Princeton Landmarks in Mathematics},
      NOTE = {With an appendix by David A. Buchsbaum,
              Reprint of the 1956 original},
 PUBLISHER = {Princeton University Press, Princeton, NJ},
      YEAR = {1999},
     PAGES = {xvi+390},
      ISBN = {0-691-04991-2},
   MRCLASS = {18Gxx (01A75)},
  MRNUMBER = {1731415},
}

@incollection {Aus1,
    AUTHOR = {Auslander, Maurice},
     TITLE = {Isolated singularities and existence of almost split
              sequences},
 BOOKTITLE = {Representation theory, {II} ({O}ttawa, {O}nt., 1984)},
    SERIES = {Lecture Notes in Math.},
    VOLUME = {1178},
     PAGES = {194--242},
 PUBLISHER = {Springer, Berlin},
      YEAR = {1986},
   MRCLASS = {13H10 (14B05 16A64)},
  MRNUMBER = {842486},
MRREVIEWER = {W. V. Vasconcelos},
       DOI = {10.1007/BFb0075297},
       URL = {http://dx.doi.org/10.1007/BFb0075297},
}

@article {Miyata,
    AUTHOR = {Miyata, Takehiko},
     TITLE = {Note on direct summands of modules},
   JOURNAL = {J. Math. Kyoto Univ.},
  FJOURNAL = {Journal of Mathematics of Kyoto University},
    VOLUME = {7},
      YEAR = {1967},
     PAGES = {65--69},
      ISSN = {0023-608X},
   MRCLASS = {13.40},
  MRNUMBER = {0214585},
MRREVIEWER = {R. E. MacRae},
}

@article {HL,
    AUTHOR = {Huneke, Craig and Leuschke, Graham J.},
     TITLE = {Two theorems about maximal {C}ohen-{M}acaulay modules},
   JOURNAL = {Math. Ann.},
  FJOURNAL = {Mathematische Annalen},
    VOLUME = {324},
      YEAR = {2002},
    NUMBER = {2},
     PAGES = {391--404},
      ISSN = {0025-5831},
     CODEN = {MAANA},
   MRCLASS = {13C14 (13A35 13D07 13H10)},
  MRNUMBER = {1933863},
MRREVIEWER = {{\=I}. {\=I}. Burban},
       DOI = {10.1007/s00208-002-0343-3},
       URL = {http://dx.doi.org/10.1007/s00208-002-0343-3},
}

@book {BH,
    AUTHOR = {Bruns, Winfried and Herzog, J{\"u}rgen},
     TITLE = {Cohen-{M}acaulay rings},
    SERIES = {Cambridge Studies in Advanced Mathematics},
    VOLUME = {39},
 PUBLISHER = {Cambridge University Press, Cambridge},
      YEAR = {1993},
     PAGES = {xii+403},
      ISBN = {0-521-41068-1},
   MRCLASS = {13H10 (13-02)},
  MRNUMBER = {1251956},
MRREVIEWER = {Matthew Miller},
}

@article {AusBuchs,
    AUTHOR = {Auslander, Maurice and Buchsbaum, David A.},
     TITLE = {Homological dimension in local rings},
   JOURNAL = {Trans. Amer. Math. Soc.},
  FJOURNAL = {Transactions of the American Mathematical Society},
    VOLUME = {85},
      YEAR = {1957},
     PAGES = {390--405},
      ISSN = {0002-9947},
   MRCLASS = {18.0X},
  MRNUMBER = {0086822},
MRREVIEWER = {C. Chevalley},
       DOI = {10.2307/1992937},
       URL = {http://dx.doi.org/10.2307/1992937},
}

@article {AR91,
    AUTHOR = {Auslander, Maurice and Reiten, Idun},
     TITLE = {Applications of contravariantly finite subcategories},
   JOURNAL = {Adv. Math.},
  FJOURNAL = {Advances in Mathematics},
    VOLUME = {86},
      YEAR = {1991},
    NUMBER = {1},
     PAGES = {111--152},
      ISSN = {0001-8708},
   MRCLASS = {16G10 (16D90)},
  MRNUMBER = {1097029},
MRREVIEWER = {Dieter Happel},
       DOI = {10.1016/0001-8708(91)90037-8},
       URL = {http://dx.doi.org/10.1016/0001-8708(91)90037-8},
}

@book {Mats,
    AUTHOR = {Matsumura, Hideyuki},
     TITLE = {Commutative ring theory},
    SERIES = {Cambridge Studies in Advanced Mathematics},
    VOLUME = {8},
   EDITION = {Second},
      NOTE = {Translated from the Japanese by M. Reid},
 PUBLISHER = {Cambridge University Press, Cambridge},
      YEAR = {1989},
     PAGES = {xiv+320},
      ISBN = {0-521-36764-6},
   MRCLASS = {13-01},
  MRNUMBER = {1011461},
}

@article {LW:00,
    AUTHOR = {Leuschke, Graham and Wiegand, Roger},
     TITLE = {Ascent of finite {C}ohen-{M}acaulay type},
   JOURNAL = {J. Algebra},
  FJOURNAL = {Journal of Algebra},
    VOLUME = {228},
      YEAR = {2000},
    NUMBER = {2},
     PAGES = {674--681},
      ISSN = {0021-8693},
   MRCLASS = {13H10 (13C14)},
  MRNUMBER = {1764587},
MRREVIEWER = {Sunsook Noh},
       DOI = {10.1006/jabr.2000.8294},
       URL = {http://dx.doi.org/10.1006/jabr.2000.8294},
}

@article {Wiegand:98,
    AUTHOR = {Wiegand, Roger},
     TITLE = {Local rings of finite {C}ohen-{M}acaulay type},
   JOURNAL = {J. Algebra},
  FJOURNAL = {Journal of Algebra},
    VOLUME = {203},
      YEAR = {1998},
    NUMBER = {1},
     PAGES = {156--168},
      ISSN = {0021-8693},
   MRCLASS = {13C14 (13H99)},
  MRNUMBER = {1620725},
MRREVIEWER = {Anna Guerrieri},
       DOI = {10.1006/jabr.1997.7319},
       URL = {http://dx.doi.org/10.1006/jabr.1997.7319},
}

@book {Enochs,
    AUTHOR = {Enochs, Edgar E. and Jenda, Overtoun M. G.},
     TITLE = {Relative homological algebra. {V}olume 1},
    SERIES = {De Gruyter Expositions in Mathematics},
    VOLUME = {30},
   EDITION = {extended},
 PUBLISHER = {Walter de Gruyter GmbH \& Co. KG, Berlin},
      YEAR = {2011},
     PAGES = {xiv+359},
      ISBN = {978-3-11-021520-5},
   MRCLASS = {16E65 (16-02 16E10 18G25)},
  MRNUMBER = {2857612},
}

@article {GotoKenji,
    AUTHOR = {Goto, Shiro and Nishida, Kenji},
     TITLE = {Finite modules of finite injective dimension over a
              {N}oetherian algebra},
   JOURNAL = {J. London Math. Soc. (2)},
  FJOURNAL = {Journal of the London Mathematical Society. Second Series},
    VOLUME = {63},
      YEAR = {2001},
    NUMBER = {2},
     PAGES = {319--335},
      ISSN = {0024-6107},
   MRCLASS = {13D05},
  MRNUMBER = {1810132},
MRREVIEWER = {Carl Faith},
       DOI = {10.1017/S0024610700001927},
       URL = {https://doi.org/10.1017/S0024610700001927},
}

@article {AuslanderReitenSyzygy,
    AUTHOR = {Auslander, Maurice and Reiten, Idun},
     TITLE = {{$k$}-{G}orenstein algebras and syzygy modules},
   JOURNAL = {J. Pure Appl. Algebra},
  FJOURNAL = {Journal of Pure and Applied Algebra},
    VOLUME = {92},
      YEAR = {1994},
    NUMBER = {1},
     PAGES = {1--27},
      ISSN = {0022-4049},
   MRCLASS = {16D90 (16E10 16G10 16G70)},
  MRNUMBER = {1259667},
MRREVIEWER = {Sverre O. Smal\o },
       DOI = {10.1016/0022-4049(94)90044-2},
       URL = {https://doi.org/10.1016/0022-4049(94)90044-2},
}

@book {CurtisReiner,
    AUTHOR = {Curtis, Charles W. and Reiner, Irving},
     TITLE = {Methods of representation theory. {V}ol. {I}},
    SERIES = {Wiley Classics Library},
      NOTE = {With applications to finite groups and orders,
              Reprint of the 1981 original,
              A Wiley-Interscience Publication},
 PUBLISHER = {John Wiley \& Sons, Inc., New York},
      YEAR = {1990},
     PAGES = {xxiv+819},
      ISBN = {0-471-52367-4},
   MRCLASS = {20-02 (01A75 11R33 20Cxx)},
  MRNUMBER = {1038525},
}

@book {Reiner,
    AUTHOR = {Reiner, I.},
     TITLE = {Maximal orders},
    SERIES = {London Mathematical Society Monographs. New Series},
    VOLUME = {28},
      NOTE = {Corrected reprint of the 1975 original,
              With a foreword by M. J. Taylor},
 PUBLISHER = {The Clarendon Press, Oxford University Press, Oxford},
      YEAR = {2003},
     PAGES = {xiv+395},
      ISBN = {0-19-852673-3},
   MRCLASS = {16H05 (11R54 16K20)},
  MRNUMBER = {1972204},
}

@article {GotoKenjiBass,
    AUTHOR = {Goto, Shiro and Nishida, Kenji},
     TITLE = {Towards a theory of {B}ass numbers with application to
              {G}orenstein algebras},
   JOURNAL = {Colloq. Math.},
  FJOURNAL = {Colloquium Mathematicum},
    VOLUME = {91},
      YEAR = {2002},
    NUMBER = {2},
     PAGES = {191--253},
      ISSN = {0010-1354},
   MRCLASS = {16E65 (16E05)},
  MRNUMBER = {1898632},
MRREVIEWER = {C. N\u{a}st\u{a}sescu},
       DOI = {10.4064/cm91-2-4},
       URL = {https://doi.org/10.4064/cm91-2-4},
}

@article {Zaks,
    AUTHOR = {Zaks, Abraham},
     TITLE = {Injective dimension of semi-primary rings},
   JOURNAL = {J. Algebra},
  FJOURNAL = {Journal of Algebra},
    VOLUME = {13},
      YEAR = {1969},
     PAGES = {73--86},
      ISSN = {0021-8693},
   MRCLASS = {16.90},
  MRNUMBER = {244325},
MRREVIEWER = {Alex Rosenberg},
       DOI = {10.1016/0021-8693(69)90007-6},
       URL = {https://doi.org/10.1016/0021-8693(69)90007-6},
}

@inproceedings {Auslander4,
    AUTHOR = {Auslander, Maurice},
     TITLE = {Functors and morphisms determined by objects},
 BOOKTITLE = {Representation theory of algebras ({P}roc. {C}onf., {T}emple
              {U}niv., {P}hiladelphia, {P}a., 1976)},
     PAGES = {1--244. Lecture Notes in Pure Appl. Math., Vol. 37},
      YEAR = {1978},
   MRCLASS = {18E05 (16A64 18G10)},
  MRNUMBER = {0480688},
MRREVIEWER = {Petr Nemec},
}

\end{document}